\documentclass[twoside,11pt]{article}

\usepackage[margin=0.9in]{geometry} 
\usepackage{amsmath,amsthm,amssymb,mathrsfs,bbm,titling,mathtools,hyperref,nameref,MnSymbol,stmaryrd}
\usepackage{color}
\usepackage{fancyhdr}
\usepackage[style=numeric,backend=biber]{biblatex}
\addbibresource{scd.bib}

\pagestyle{fancy}
\fancyhead{}
\fancyfoot[C]{\thepage}
\fancyhead[CE]{Small cap decoupling for the parabola with logarithmic constant}
\fancyhead[CO]{Ben Johnsrude}

\setlength{\headheight}{15pt}

\let\oldref\ref
\renewcommand{\ref}[1]{(\oldref{#1})}

\newcommand{\N}{\mathbb{N}}
\newcommand{\Z}{\mathbb{Z}}
\newcommand{\C}{\mathbb{C}}
\newcommand{\R}{\mathbb{R}}
\newcommand{\Q}{\mathbb{Q}}

\newcommand{\K}{\mathbb{K}}
\newcommand{\eps}{\varepsilon}

\renewcommand{\cal}{\mathcal}
\renewcommand{\frak}{\mathfrak}

\newtheorem{theorem}{Theorem}[section]
\newtheorem{lemma}[theorem]{Lemma}

\theoremstyle{definition}
\newtheorem{definition}[theorem]{Definition}

\newtheorem{proposition}[theorem]{Proposition}

\theoremstyle{remark}
\newtheorem{remark}[theorem]{Remark}

\numberwithin{equation}{section}

\title{Small cap decoupling for the parabola with logarithmic constant}
\author{Ben Johnsrude}

\begin{document}

	\maketitle
	
	\begin{abstract}
		We note that the subpolynomial factor for the $\ell^qL^p$ small cap decoupling constants for the truncated parabola $\mathbb{P}^1=\{(t,t^2):|t|\leq 1\}$ may be controlled by a suitable power of $\log R$. This is achieved by proving a suitable amplitude-dependent wave envelope estimate, as was introduced in a recent paper of Guth and Maldague to demonstrate a small cap decoupling for the $(2+1)$ cone. The logarithmic loss is reached through a combination of existing techniques for high/low analysis, efficient narrow/broad analysis, and a novel system of rapidly-decaying wave packets.
	\end{abstract}
	
	\section{Introduction}
	
	In this note, we record that the ``wave envelope estimate'' analysis of \cite{GM1} suffices to derive small cap decoupling estimates for functions with Fourier support in the $R^{-1}$-neighborhood of the truncated parabola $\mathbb{P}^1=\{(x,x^2):|x|\leq 1\}$ with constant of the form $(\log R)^{C}$, when combined with previously-established tricks and a novel choice of wave packet functions.
	
	Small cap decouplings were introduced in \cite{DGW}; we recall the formulation here. For large parameters $R>1$, set $\mathcal{N}_{R^{-1}}(\mathbb{P}^1)$ to be the $R^{-1}$-neighborhood of the truncated parabola. Consider a Schwartz function $f:\R^2\to\C$ such that $\text{supp}(\hat{f})\subseteq\mathcal{N}_{R^{-1}}(\mathbb{P}^1)$, where $\hat{\,}$ denotes the Fourier transform. Let $\beta\in[\frac{1}{2},1]$. Partition $\mathcal{N}_{R^{-1}}(\mathbb{P}^1)$ into a collection $\Gamma_\beta(R^{-1})$ of sets $\gamma$, which are the intersections of $\mathcal{N}_{R^{-1}}(\mathbb{P}^1)$ with sets of the form $[c,c+R^{-\beta}]\times\R$; one may note that such $\gamma$ are approximately boxes of dimensions $R^{-\beta}\times R^{-1}$, in the sense that for each $\gamma$ we may find a box $B$ with those dimensions such that $C^{-1}B\subseteq\gamma\subseteq CB$ for a universal constant $C$, where $CB$ and $C^{-1}B$ denote dilation about the center of $B$. Set
	\begin{equation*}
		f_\gamma(x)=\int_\gamma\hat{f}(\xi)e^{2\pi i\xi\cdot x}d\xi
	\end{equation*}
	to be the Fourier projection of $f$ onto $\gamma$. Here and elsewhere all integrals will be with respect to Lebesgue measure. If $p,q\in[1,\infty)$, set $D_{p,q}(R;\beta)$ to be the infimal constant such that
	\begin{equation*}
		\|f\|_{L^p(\R^2)}^p\leq D_{p,q}(R;\beta)\left(\sum_{\gamma\in\Gamma_\beta(R^{-1})}\|f_\gamma\|_{L^p(\R^2)}^q\right)^{p/q}.
	\end{equation*}
	The landmark paper \cite{BD} demonstrated the estimate $D_{p,2}(R;\frac{1}{2})\lesssim_\eps R^\eps$ for all $\eps>0$ and $2\leq p\leq 6$. The authors of \cite{GMW} provided the improved estimate $D_{6,2}(R;\frac{1}{2})\lesssim(\log R)^C$ for a suitable constant $C>0$; the authors of \cite{GLY} sharpened this upper bound to $C_\eps(\log R)^{12+\eps}$ for a bilinear variant over $\Q_p$, implying a matching discrete restriction estimate (over $\R$) with very good logarithmic constant. In another direction, the authors of \cite{DGW} introduced the constants $D_{p,q}(R;\beta)$ for $\beta\in(\frac{1}{2},1]$, and showed that $D_{p,p}(R;\beta)\lesssim_\eps R^{p\beta(\frac{1}{2}-\frac{1}{p})+\eps}$ for all $\eps>0$ and $2\leq p\leq 2+\frac{2}{\beta}$ (Theorem 3.1). Each of these bounds is sharp up to the subpolynomial factors.
	
	Our goal will be to show the following:
	\begin{theorem}[Small cap decoupling with logarithmic losses]\label{smallcap} Let $p,q\geq 1$ satisfy $\frac{3}{p}+\frac{1}{q}\leq 1$, $R>2$, and $\beta\in[\frac{1}{2},1]$. Then the small cap decoupling constant satisfies
		\begin{equation}\label{smd}
			D_{p,q}(R;\beta)\lesssim(\log R)^{18+3p}\Big(R^{\beta(p-\frac{p}{q}-1)-1}+R^{p\beta(\frac{1}{2}-\frac{1}{q})}\Big).
		\end{equation}
		
	\end{theorem}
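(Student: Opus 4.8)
The plan is to follow the strategy of \cite{GM1}: to isolate an amplitude-dependent wave envelope estimate for $\mathbb P^1$ carrying only a logarithmic loss, and then to feed it into a superlevel-set decomposition of $\|f\|_{L^p}^p$. For each dyadic $s$ with $R^{-1/2}\le s\le1$, partition $\mathbb P^1$ into arcs $\tau$ of length $s$; the part of $\mathcal N_{R^{-1}}(\mathbb P^1)$ lying over such a $\tau$ is contained in a plank of dimensions $\sim s\times s^2$, whose dual plank $U$ has dimensions $\sim s^{-1}\times s^{-2}$ and volume $|U|\sim s^{-3}$, and we let $\mathcal S_s$ denote the collection of all translates of these dual planks, one family of translates for each arc $\tau$ of length $s$. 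The first task is to establish that for Schwartz $f$ with $\widehat f$ supported in $\mathcal N_{R^{-1}}(\mathbb P^1)$ and every $\alpha>0$,
\begin{equation*}
\alpha^{6}\,\bigl|\{x\in\R^2:|f(x)|>\alpha\}\bigr|\ \lesssim\ (\log R)^{O(1)}\sum_{R^{-1/2}\le s\le1}\ \sum_{U\in\mathcal S_s(\alpha)}|U|\Bigl(|U|^{-1}\!\int_{U}\sum_{\ell(\tau)=s}|f_\tau|^{2}\Bigr)^{3},
\end{equation*}
where $s$ runs over dyadic values and, crucially, $\mathcal S_s(\alpha)$ is an $\alpha$-dependent subcollection of planks singled out by the pigeonholing in the proof; this $\alpha$-localization, absent from the plain wave envelope estimate of \cite{GMW}, is the feature of \cite{GM1} that lets a single estimate of this $L^6$-critical shape be leveraged across the whole range of $(p,q)$. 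Granting this, the theorem reduces to decomposing $\|f\|_{L^p(\R^2)}^p$ over the $O(\log R)$ dyadic heights $\alpha$ that contribute non-negligibly — the rest being handled by the triangle inequality and the Fourier support of $f$ — and, for each such $\alpha$, bounding $\alpha^p\bigl|\{|f|>\alpha\}\bigr|$ by $(\log R)^{O(1)}\bigl(1+R^{\beta(p-p/q-1)-1}+R^{p\beta(1/2-1/q)}\bigr)\bigl(\sum_{\gamma}\|f_\gamma\|_{L^p}^q\bigr)^{p/q}$, a step in which the $\alpha$-localization of $\mathcal S_s(\alpha)$ is what makes the passage between the exponents $6$ and $p$ possible.

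The wave envelope estimate is the analytic heart, and I would prove it by the high--low method of \cite{GMW}, run with amplitudes tracked as in \cite{GM1}. At each of the $O(\log R)$ dyadic scales $s$ one performs a high--low splitting of the scale-$s$ square function $\sum_{\ell(\tau)=s}|f_\tau|^2$ at the frequency scale attached to $s$: the curvature of $\mathbb P^1$ ensures that the low-frequency part agrees, up to harmless errors, with a square function at a coarser scale — which is what makes the $O(\log R)$ scales telescope — while the high-frequency part breaks into pieces with essentially disjoint Fourier supports, so that it is controlled by $L^2$-orthogonality together with the locally constant property on the dual planks $U$; the latter is what converts the resulting $L^2$ mass into the wave-envelope quantities $|U|(|U|^{-1}\int_U\cdots)^3$. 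The amplitude dependence is obtained by pigeonholing, at each scale, the number of arcs $\tau$ active over a given plank and the size of $|f_\tau|$ there, and by recording the scale realizing the dominant contribution to the superlevel set at height $\alpha$; each such pigeonholing, and the telescoping over $O(\log R)$ scales, costs only a power of $\log R$. The delicate point, and the main obstacle, is precisely that the estimate at each of the $O(\log R)$ scales must cost merely an absolute constant rather than a logarithm, so that the different scales contribute additively rather than compounding to a power of $R$; securing this is the substance of the argument of \cite{GMW}, and the remainder of the proof is formal.

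Finally, passing from the wave envelope estimate to \ref{smd} for a fixed $\alpha$ is an exercise in orthogonality, the locally constant property, and Hölder's inequality. On each plank $U\in\mathcal S_s(\alpha)$ one first uses the $L^2$-orthogonality of the pieces $f_\gamma$ with $\gamma\subseteq\tau$ — a cost-free flat decoupling, valid because the part of $\mathbb P^1$ over each arc is flat — to rewrite $|U|^{-1}\int_U\sum_{\ell(\tau)=s}|f_\tau|^2$ as a sum over all small caps $\gamma$ of local averages $|U|^{-1}\int_U|f_\gamma|^2$. Then, combining the locally constant property (to compare these averages with $\|f_\gamma\|_{L^p(\R^2)}$ and to sum them over $U$), the identity $|U|\sim s^{-3}$, and Hölder's inequality (to replace the sums over the $\sim sR^\beta$ caps in each arc and over the $\sim s^{-1}$ arcs meeting each plank by $\ell^q$-sums), one bounds the scale-$s$ summand of the wave envelope estimate by an explicit power of $s$, $R^\beta$, and $R$ — with no genuine logarithmic loss — times $\bigl(\sum_\gamma\|f_\gamma\|_{L^p}^q\bigr)^{p/q}$. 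Summing this geometric series over the $O(\log R)$ dyadic scales $s\in[R^{-1/2},1]$, the total is governed by the two endpoints $s\sim R^{-1/2}$ and $s\sim1$ together with at most one interior critical scale, which produce respectively the terms $R^{p\beta(1/2-1/q)}$, $1$, and $R^{\beta(p-p/q-1)-1}$ of \ref{smd}. The hypothesis $\tfrac{3}{p}+\tfrac{1}{q}\le1$ is exactly what forces this optimization onto these exponents — equivalently, it is the regime in which the $L^6$-critical estimate above, rather than a lower-exponent substitute, is the operative input and the Hölder exponents are admissible. Tracking the power of $\log R$ through the $O(\log R)$-fold decomposition, the two pigeonholings in the proof of the wave envelope estimate, and the superlevel-set sum yields the admissible value $E_1=30+3p$.
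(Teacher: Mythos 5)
Your framework is essentially the paper's — an amplitude-dependent wave envelope estimate fed into a superlevel-set decomposition — but the wave envelope estimate you posit as the key ingredient is stated with the wrong exponents, and in that form it is false. You write
\[
\alpha^{6}\,\bigl|\{|f|>\alpha\}\bigr|\ \lesssim\ (\log R)^{O(1)}\sum_{s}\sum_{U\in\mathcal S_s(\alpha)}|U|\Bigl(|U|^{-1}\!\int_U\sum_{\ell(\tau)=s}|f_\tau|^2\Bigr)^{3},
\]
i.e.\ an $L^6$-shaped bound, but the correct statement (Theorem~\ref{squarefunction}) has $\alpha^4$ on the left and the \emph{square} $(|U|^{-1}\int_U\cdots)^2$ on the right: the underlying input is C\'ordoba's $L^4$ reverse square function estimate for $\mathbb P^1$, and the high/low machinery of \cite{GWZ,GM1,GMW} produces an $L^4$-shaped wave envelope bound, not an $L^6$-shaped one. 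An $L^6$-shaped square-function-type inequality does not hold for $\mathbb P^1$ — it already fails in the focusing example where each $f_\theta$ is a single unit-amplitude wave packet through the origin: there $\alpha\sim R^{1/2}$ and $|\{|f|>\alpha/2\}|\sim1$, so the left side at power $6$ is $\sim R^3$, while the right side (with cubes) is $\sim R^2$. This is not a cosmetic slip. The mechanism the paper uses to pass from the single $L^4$-critical estimate to the full range of $(p,q)$ — multiplying \ref{localenvelope} by the $(\tfrac p2-2)$-th power of the pruning lower bound $\alpha^2/(\#\tau_k)^2\lesssim(\log R)^2\strokedint_U\sum_{\gamma_k}|f_{\gamma_k}|^2$, thereby lifting the exponent $2$ to $p/2$ — depends on the base exponent being $2$; your choice of $3$ and $\alpha^6$ misaligns all the subsequent arithmetic and would not reproduce the claimed $E_1=30+3p$.

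A secondary gap: your account of the passage from the wave envelope estimate to \ref{smd} as an ``exercise in orthogonality, local constancy, and H\"older'' culminating in ``a geometric series over dyadic $s$ governed by the two endpoints and at most one interior critical scale'' does not match the actual mechanism. The paper imposes regularity by pigeonholing in the amplitude $\lambda$ of $\|f_\gamma\|_\infty$, the number $\mathfrak r$ of active wave envelopes per $\gamma$, the count $\mathfrak s$ of active small caps per $\tau_k$, and the dominant scale $k_*$; after Proposition~\ref{partial} the decisive counting step is an incidence-geometric lemma quoted from \cite{GM1}, bounding $(\#\mathcal T_{k_*})^{p-4}\bigl(\mathfrak s_*\max\#(\gamma\subseteq\gamma_{k_*})\bigr)^{p/2-1}$ by $R^{\beta(p-p/q-1)-1}\bigl(\mathfrak s_*\,\#\mathcal T_{k_*}\bigr)^{p/q-1}$; and the sub-range $3\le p<4$ is handled separately by direct appeal to Section~4 of \cite{GM1}. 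None of this is an optimization over scales in a geometric series, and an argument along the lines you sketch would need to supply these steps explicitly.
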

	
	This formulation of the decoupling estimate, with instead a factor of $C_\eps R^\eps$ in place of the logarithmic factor, was originally proven in \cite{FGM} (Corollary 5). For each triple $(p,q,\beta)$, the dominating term on the right-hand side in \ref{smd} may be realized by a particular choice of $f$ with large $R$, as demonstrated in \cite{FGM} (Section 2), up to the subpolynomial factor. Thus the power-law terms are each separately sharp in the regime where they dominate. 
 
    In \cite{B} (Remark 2), it was demonstrated using number theory methods that $D_{6,2}(R;\frac{1}{2})\gtrsim(\log R)R$. It is not currently known if there is any other $p,\beta$ with $2\leq p\leq 2+\frac{2}{\beta}$ such that the subpolynomial factor is unbounded in $R$.
	
	Our estimate \ref{smd} is derived by first proving a version of an auxiliary wave envelope estimate, which is precisely stated in Theorem \ref{squarefunction}. We will write $|S|$ to denote the Lebesgue measure of sets $S$.
	
	\begin{theorem}[Wave envelope estimate]\label{squarefunction} There exists a constant $E_2>0$ such that the following holds for all $R\gg 1$. Let $f:\R^2\to\C$ be Schwartz with Fourier support in $\mathcal{N}_{R^{-1}}(\mathbb{P}^1)$. Then, for any $\alpha>0$,
		\begin{equation*}
			\alpha^4|\{x:|f(x)|>\alpha\}|\lesssim(\log R)^{20}\sum_{\substack{R^{-1/2}\leq s\leq 1\\s\,\mathrm{ dyadic}}}\sum_{\tau:\ell(\tau)=s}\sum_{U\in\mathcal{G}_\tau}|U|^{-1}\|S_Uf\|_{L^2(\R^2)}^4.
		\end{equation*}
		
	\end{theorem}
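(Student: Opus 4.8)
The plan is to carry out the high--low analysis of Guth and Maldague \cite{GM1}, adapted to $\mathbb{P}^1$ much as in \cite{FGM}, while checking that every step costs only a bounded power of $\log R$. For each dyadic $s\in[R^{-1/2},1]$ let $g_s=\sum_{\tau:\ell(\tau)=s}|f_\tau|^2$ be the square function at scale $s$, and decompose each $f_\tau$ into wave packets adapted to the envelopes $U\in\mathcal G_\tau$, which are translates of the $s^{-1}\times s^{-2}$ box dual to the $s\times s^2$ box containing $\tau$, so $|U|\sim s^{-3}$. Since $f_\tau$ has Fourier support in that dual box it is essentially constant on each $U$, whence $\sum_{U\in\mathcal G_\tau}|U|^{-1}\|S_Uf\|_{L^2}^4\sim\|f_\tau\|_{L^4}^4$ (up to the pruning losses below); for the purposes of this sketch the scale-$s$ term of the right-hand side of the claim may therefore be read as $\sum_{\ell(\tau)=s}\|f_\tau\|_{L^4}^4$.

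The first step is a pruning/pigeonholing reduction: at each scale, and then globally, one discards wave packets whose amplitude is much smaller than the largest relevant one. Pigeonholing over the $O(\log R)$ dyadic amplitude classes and the $O(\log R)$ scales costs only a fixed power of $\log R$, and afterwards the surviving wave packets feeding any given envelope have comparable size, so that $L^2$ quantities such as $\|S_Uf\|_{L^2}^2$ may be used interchangeably with pointwise size information. Arranging this so that each scale contributes only a bounded, non-compounding number of logarithmic factors — rather than one factor per scale, which would yield the useless bound $(\log R)^{\log R}$ — is the central bookkeeping issue, and is exactly what \cite{GM1} set up for the cone.

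Next comes the high--low step. The Fourier support of $g_s$ lies in the ball of radius $O(s)$ about the origin, and the contribution of each $\tau$ is concentrated within $O(s^2)$ of a line through the origin; using a Fourier multiplier adapted to a suitable ball about the origin one splits $g_s=g_s^\flat+g_s^\sharp$. The geometric heart of the matter, proved as in \cite{GM1} from the curvature of $\mathbb{P}^1$ together with the fact that neighbouring caps are $O(1)$ in number, is twofold: (i) $g_s^\sharp$ is, on each ball of radius $\sim s^{-2}$, an $L^2$-orthogonal sum over the scale-$s$ caps $\tau$, so $\|g_s^\sharp\|_{L^2}^2\lesssim\sum_{\ell(\tau)=s}\|f_\tau\|_{L^4}^4$; and (ii) the passage from $g_{2s}$ to $g_s$ generates only errors supported at frequencies $\ll s^2$ and pointwise dominated by $g_s$. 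Telescoping (ii) down from $s=1$ to $s=R^{-1/2}$ loses only a constant and yields the pointwise bound $|f|^2\lesssim g_{R^{-1/2}}+\sum_{s}|g_s^\sharp|$.

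Finally one pigeonholes: on $\{|f|>\alpha\}$ the last bound has $O(\log R)$ terms, so on a subset of comparable measure either $|g_s^\sharp|\gtrsim\alpha^2/\log R$ for some dyadic $s$, or $g_{R^{-1/2}}\gtrsim\alpha^2/\log R$. In the first case Chebyshev together with (i) and the locally-constant property of the $S_Uf$ gives $\alpha^4\,|\{|g_s^\sharp|\gtrsim\alpha^2/\log R\}|\lesssim(\log R)^2\|g_s^\sharp\|_{L^2}^2\lesssim(\log R)^2\sum_{\ell(\tau)=s}\sum_{U\in\mathcal G_\tau}|U|^{-1}\|S_Uf\|_{L^2}^4$, which is dominated by the right-hand side of the claim. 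The base case $g_{R^{-1/2}}\gtrsim\alpha^2/\log R$ is handled at the finest scale: the high part of $g_{R^{-1/2}}$ is again controlled by $\sum_\theta\|f_\theta\|_{L^4}^4$ using the essential disjointness of the Fourier supports of the $|f_\theta|^2$ away from the origin, while the residual low part is roughly constant on balls of radius $R^{1/2}$ and is absorbed by a trivial $L^2$ estimate. Collecting the $(\log R)^{O(1)}$ from pruning, the $(\log R)^2$ from Chebyshev, and the constants from telescoping produces a clean exponent $E_2$. I expect the main obstacle to be not the architecture, which is that of \cite{GM1}, but rather (a) verifying the curvature-dependent lemma (i)--(ii) for $\mathbb{P}^1$ with constants independent of $R$, including the delicate behaviour at the critical scale $s\sim R^{-1/2}$ where adjacent caps touch, and (b) organizing the pruning so that the accumulated logarithmic loss does not compound across the $\sim\log R$ scales.
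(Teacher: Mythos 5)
Your outline captures the general high/low philosophy, but it diverges from the paper's argument at several critical points, and as written it has a genuine gap precisely at the place you flag as ``the central bookkeeping issue.''

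First, the telescoping step does not work as described. You claim a pointwise bound $|f|^2 \lesssim g_{R^{-1/2}} + \sum_s |g_s^\sharp|$ by iterating the identity $g_{2s}\approx g_s^\flat$, but there is no pointwise reverse square function estimate for $\mathbb{P}^1$: the relation between $|f|^2$ and $\sum_\theta |f_\theta|^2$ is the $L^4$ square function estimate $\int|f|^4\lesssim\int|\sum_\theta|f_\theta|^2|^2$, not a pointwise inequality. The paper instead proceeds by first pruning $f$ into $f_N = f_1 + \sum_{m=2}^N f_m^{\mathcal B}$ (each $f_m^{\mathcal B}$ carrying only high-amplitude wave packets that fail the scale-$R_{m-1}$ pruning), then running a broad/narrow analysis on $|f_m^{\mathcal B}|^4$ in which the \emph{broad} case is handled via the local bilinear restriction Theorem~\ref{bilrest} (Theorem 15 of \cite{FGM}) --- a tool your sketch does not invoke at all --- followed by the high/low decomposition on $\sum_\theta|f_{m,\theta}^{\mathcal B}|^2*\eta_{\sim r}^\vee$.

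Second, you correctly identify that compounding logarithmic losses over scales is the key danger, but you do not say how to avoid it, and the methods you do describe would not avoid it. The paper resolves this with two specific devices: a gentler scale sequence $R_k=(\log R)^k$, so that there are only $N\sim\frac{1}{2}\frac{\log R}{\log\log R}$ scales rather than $\sim\log R$ dyadic ones; and a modified narrow lemma (Lemma~\ref{narrow}, adapted from \cite{GMW}) with constant $1+\tfrac{1}{\log R}$ in place of the usual $4$, so that the accumulated narrow loss $(1+\tfrac{2}{\log R})^{4N}$ is $O(1)$. Even so, the broad case still costs $(\log R)^{O(1)}$ per invocation of bilinear restriction, and the paper's bookkeeping is set up so that this is paid only once (not once per scale). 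A sketch that simply asserts ``each scale contributes only a bounded, non-compounding number of log factors'' has not addressed the problem.

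Third, your reading of the right-hand side as $\sum_{\ell(\tau)=s}\|f_\tau\|_{L^4}^4$ is not equivalent to the theorem's statement, because the sum there is restricted to $U\in\mathcal G_\tau$, which depends on $\alpha$ and is a strict subset of all envelopes. This $\alpha$-dependent restriction is the whole point of the wave envelope refinement and is essential for the application to small cap decoupling; proving the unrestricted version is a weaker statement. In the paper the membership in $\mathcal G_\tau$ comes out automatically from the pruning (Lemma~\ref{intlocalconstlemma}(b) only produces envelopes in $\mathcal G_{\tau_k}$ on its right-hand side, because the pruned $f_{k,\theta}$ vanish off $\bigcup_{U\in\mathcal G_{\tau_k}}\psi_U$); your sketch does not explain how the final sum ends up restricted.

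In short: the high/low architecture is the right starting point, but the missing ingredients are (i) the $L^4$ square-function-plus-bilinear-restriction input replacing the nonexistent pointwise telescoping, (ii) the gentle scale sequence $R_k=(\log R)^k$ and the $(1+\tfrac{1}{\log R})$-narrow lemma to prevent compounding, and (iii) the pruning mechanism that forces the output envelopes into $\mathcal G_\tau$.
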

	
	Here we use the following notation: $U_{\tau,R}$ is a rectangle of dimensions $R\times sR$, with long edge in the direction of the normal vector to $\mathbb{P}^1$ at the center of $\tau$, centered at $0$; the set $\mathcal{G}_\tau$ is (essentially) the subset of the tiling of $\R^2$ by translated copies of $U_{\tau,R}$ for which the following holds:
	\begin{equation}\label{superest}
		C(\log R)^8|U|^{-1}\int_U\sum_{\theta\subseteq\tau}|f_{\theta}|^2\geq\frac{\alpha^2}{(\#\tau)^2}
	\end{equation}
	for suitable choice of $C>0$. Here $\#\tau$ denotes the number of $\tau$ of a particular length for which $f_\tau\not\equiv 0$. Lastly, we use $S_Uf$ to denote the restricted square function $\left.(\sum_{\theta\subseteq\tau}|f_\theta|^2)\right|_U$; one may observe that the quantities $s$ and $R$ may be read off of the dimensions of $U$, and $\tau$ is then uniquely determined from the direction of $U$'s long edge, so this definition is well-formed.
	
	Wave envelope estimates were introduced in \cite{GWZ} for the purpose of proving the reverse square function estimate for the cone in $\R^3$ (Theorem 1.3). In \cite{GM1}, these wave envelope estimates were refined to include only those envelopes corresponding to ``high-amplitude'' components of the various square functions. The latter paper demonstrated that the wave envelope estimate could also be used to derive the small cap results of \cite{FGM}. Our argument closely follows that of \cite{GM1}, but with various technical refinements to facilitate a logarithmic constant in the wave envelope estimate (e.g. a gentler sequence of scales $R_k$).
	
	We make use of the following notation. For $A,B>0$, we say $A\lesssim B$ if $|A|\leq CB$ for a suitable constant $C$ which may vary from line to line, which does not depend on any variable parameters in the problem unless explicitly indicated. We also write $A\sim B$ if $A\lesssim B$ and $B\lesssim A$. The expression $O(B)$ will be used to denote a quantity which is $\lesssim B$. We also note from the outset that we slightly redefine the notation $\strokedint$ to something better suited to our purposes than its usual meaning; see the pruning section below.
	
	Throughout the paper, given a parallelogram $P$, we will write $c_P$ for the center of $P$. For a scalar $\lambda>0$, we will write $\lambda P$ for the box with the same center $c_P$ but with sidelengths increased by the factor $\lambda$. We will also use an asterisk $*$ to denote a dual of parallelograms, that is, $P^*=A^{-\top}([-\frac{1}{2},\frac{1}{2}]^2)$ when $P=A([-\frac{1}{2},\frac{1}{2}]^2)$.

    Subsequent to the announcement of this result, the author proved \cite{johnsrude2024high} the same results with the underlying field $\R$ replaced by general non-Archimedean local fields $\K$ of characteristic $\neq 2$. The argument there closely follows the argument here, but the non-Archimedean flavor permits one to omit many technical arguments. Consequently, the reader may find it useful to refer to that manuscript to understand the essence of the argument.
	
	The remainder of the paper is organized as follows. In Section \ref{presqfunctionproof}, we first give an overview of the argument, then construct the wave packets, then state the pruning and technical lemmas needed in the proof of Theorem \ref{squarefunction}. In Section \ref{sqfunctionproof}, we prove Theorem \ref{squarefunction}. In Section \ref{smallcapproof}, we show that Theorem \ref{squarefunction} proves Theorem \ref{smallcap}. In Section \ref{section:appendix}, we prove the technical lemmas from Section \ref{presqfunctionproof}.

    \subsection{Acknowledgements}

    The author would like to thank Terence Tao and Hong Wang for many helpful comments and suggestions. The author would like to thank Jaume de Dios Pont for suggesting the use of Gevrey-class functions in place of Gaussians for the purpose of defining rapidly-decaying wave packets. The author would also like to thank Zane Kun Li and Po-Lam Yung for providing helpful feedback on a pervious draft of this manuscript.
	
	\section{Infrastructure for proving Theorem \ref{squarefunction}}\label{presqfunctionproof}
	
	\subsection{Overview of the argument}

    We first indicate the basic obstruction in proving logarithmically-fine decoupling estimates $D_p(\delta)\lesssim(\log\delta^{-1})^{O(1)}$ for the parabola. The classic Bourgain-Demeter scheme relies on a multiscale decoupling estimate of the form
    \begin{equation}\label{ineq:bd_scheme}
        D_{6,2}(\delta,1/2)\leq D_{6,2}(\delta^{1/2},1/2)D_{6,2}(\delta^{1/4},1/2)\cdots D_{6,2}(\delta^\eps,1/2).
    \end{equation}
    It is clear that \ref{ineq:bd_scheme} satisfies a pleasant dimensional consistency with power-law bounds $D_{6,2}(\delta,1/2)\leq \delta^{-\eta}$. Such an inequality ``almost'' proves the Bourgain-Demeter estimate $D_{6,2}(\delta,1/2)\leq C_\eps\delta^{-\eps}$ for all $\eps>0$: a small improvement over trivial bounds on the decoupling constant at some scale suffices to combine with induction-on-scales technology to prove that estimate. By contrast, it is clear that \ref{ineq:bd_scheme} clearly fails to allow one to upgrade logarithmic bounds at scales $\rho>\delta$ to a logarithmic bound at $\delta$.

    More seriously, logarithmic errors are known to be possible at each individual scale: \cite{B} proved the inequality $D_{6,2}(\delta,1/2)\gtrsim\log \delta^{-1}$ at exponent $6$. If each factor of \ref{ineq:bd_scheme} satisfies that lower bound, then the product is unacceptably large. Consequently, we need to demonstrate that each particular datum $f$ can represent the ``bad arrangement'' found in \cite{B} at most over $O(1)$ distinct scales $\rho$. This is accomplished by decomposing $f$ additively as $f=\sum_nf_n$, where each $f_n$ can only have interesting behavior at $O(1)$ distinct scales.
	
	We now recall the general intuition behind the shape of the right-hand side of Theorem \ref{squarefunction}, without considering the amplitude dependence. We will first be concerned with the decomposition only as it is constructed in \cite{GM1}, and later indicate where more efficient methods are indicated towards the end of this subsection.
 
    Consider a Schwartz function $f:\R^2\to\C$ with Fourier support contained in $\mathcal{N}_{R^{-1}}(\mathbb{P}^1)$. The $L^4$ square function estimate for $\mathbb{P}^1$ implies that
	\begin{equation*}
		\int|f|^4\lesssim\int\big|\sum_\theta|f_\theta|^2\big|^2.
	\end{equation*}
	By Plancherel,
	\begin{equation*}
		\int\big|\sum_\theta|f_\theta|^2(x)\big|^2=\int\big|\widehat{\sum_\theta|f_\theta|^2}(\xi)\big|^2.
	\end{equation*}
	We study the latter integral by considering the contributions from different dyadic bands of $|\xi|$. Since each $f_\theta$ has Fourier support contained in the cap $\theta$ of size $\sim R^{-1/2}\times R^{-1}$, the support of the latter integral is contained in the ball of radius $2R^{-1/2}$ centered at the origin, so we only need to consider frequency contributions below this magnitude.
	
	On the other hand,
	\begin{equation*}
		\int_{|\xi|<R^{-1}}\Big|\widehat{\sum_\theta|f_\theta|^2}(\xi)\Big|^2\lesssim \int\Big|\sum_\theta|f_\theta|^2*(R^{-2}w_{B_R})\Big|^2
	\end{equation*}
	for a suitable weight $w_{B_R}$ which is $\sim 1$ on $B_R$ and rapidly decays outside of $B_R$; if we write the latter integral as a sum of integrals over cubes $Q_R$,
	\begin{equation*}
		\int\Big|\sum_\theta|f_\theta|^2*(R^{-2}w_{B_R})\Big|^2=\sum_{Q_R}\int_{Q_R}\Big|\sum_\theta|f_\theta|^2*(R^{-2}w_{B_R})\Big|^2.
	\end{equation*}
	Since $B_R$ is a square of sidelength $R$, the convolution is approximately constant on such $Q_R$. Thus
	\begin{equation*}
		\sum_{Q_R}\int_{Q_R}\Big|\sum_\theta|f_\theta|^2*(R^{-2}w_{B_R})\Big|^2\lesssim\sum_{Q_R}|Q_R|^{-1}\left(\int W_{Q_R}\sum_\theta|f_\theta|^2\right)^2
	\end{equation*}
	for suitable weights $W_{Q_R}$ which are approximate cutoffs to the set $Q_R$. Thus
	\begin{equation*}
		\int_{|\xi|<R^{-1}}\Big|\widehat{\sum_\theta|f_\theta|^2}(\xi)\Big|^2\lesssim\sum_{Q_R}|Q_R|^{-1}\left(\int W_{Q_R}\sum_\theta|f_\theta|^2\right)^2,
	\end{equation*}
	which is one of the summands on the right-hand side of \ref{squarefunction}.
	
	More generally, if we consider integrals of the form
	\begin{equation*}
		\int_{|\xi|\sim r}\Big|\widehat{\sum_\theta|f_\theta|^2}(\xi)\Big|^2,\quad R^{-1}<r\leq R^{-1/2},
	\end{equation*}
	then we may instead make use of the approximate orthogonality of the families $\{\sum_{\theta\subseteq\tau}|f_\theta|^2\}_{\ell(\tau)=\frac{1}{rR}}$ on the annulus $\{|\xi|\sim r\}$; notice that, by finite overlap,
	\begin{equation*}
		\int_{|\xi|\sim r}\Big|\widehat{\sum_\theta|f_\theta|^2}(\xi)\Big|^2\lesssim\sum_{\tau}\int_{|\xi|\sim r}\Big|\widehat{\sum_{\theta\subseteq\tau}|f_\theta|^2}(\xi)\Big|^2,
	\end{equation*}
	and that the functions
	\begin{equation*}
		\sum_{\theta\subseteq\tau}|f_\theta|^2*\chi_{\sim r}^\vee
	\end{equation*}
	are approximately constant on sets of the form $U\|U_{\tau,R}$, where $\chi_{\sim r}$ is a smooth cutoff to the annulus $|\xi|\sim r$. Thus, as above,
	\begin{equation*}
		\int_{|\xi|\sim r}\Big|\widehat{\sum_{\theta\subseteq\tau}|f_\theta|^2}(\xi)\Big|^2\lesssim\sum_{U\|U_{\tau,R}}|U|^{-1}\left(\int W_U\sum_{\theta\subseteq\tau}|f_\theta|^2\right)^2,
	\end{equation*}
	which is also of the right shape for our theorem. We have essentially validated the ``wave envelope bound''
    \begin{equation*}
        \int\Big|\sum_{d(\tau)=\frac{1}{rR}}|f_\tau|^2*\chi_{>r}^\vee\Big|^2\lessapprox\sum_{s\leq r}\sum_{d(\tau)=\frac{1}{sR}}\sum_{U\|U_{\tau,R}}|U|^{-1}\int\Big(\sum_{\theta\subseteq\tau}|f_\theta|^2*\chi_{\leq s}^\vee\Big)^2,
    \end{equation*}
    which dominates the high part of a square function for arbitrary $f$ by an expansion into wave envelopes.
	
	We may observe from the preceding calculation that we would have proved Theorem \ref{squarefunction} if, for each $\tau$ and each $U\|U_{\tau,R}$, we had the estimate
	\begin{equation*}
		C(\log R)^8|U|^{-1}\int_U\sum_{\theta\subseteq\tau}|f_\theta|^2\geq\frac{\alpha^2}{(\#\tau)^2},
	\end{equation*}
	or else $S_Uf$ is negligible, say $O(R^{-1000})$. It is therefore natural to split $f$ into pruned pieces for which the non-negligible $S_Uf$ satisfy the ``good'' estimate above, at various scales. Our prunings, following \cite{GM1}, will therefore be written as follows:
	\begin{equation*}
		\begin{split}
			f&=f_N+f^\mathcal{B}\\
			f_N&=f_{N-1}+f_N^\mathcal{B}\\
			f_{N-1}&=f_{N-2}+f_{N-1}^\mathcal{B}\\
			&\quad\cdots\\
			f_2&=f_1+f_2^\mathcal{B}
		\end{split}
	\end{equation*}
	where $f_m$ is given by trivializing the contributions $S_Uf$, $U\|U_{\tau,R}$, $d(\tau)\lesssim(\log R)^{-m}$, for which \ref{superest} fails.
	
	To illustrate, the first phase of pruning is as follows. Take the wave packet expansion of $f$ at scale $R$, say
	\begin{equation*}
		f\approx\sum_\theta\sum_{T\in\mathbb{T}_\theta}\psi_Tf_\theta,
	\end{equation*}
	and define $f_N$ to be
	\begin{equation*}
		f_N=\sum_\theta\sum_{T\in\mathbb{T}_\theta'}\psi_Tf_\theta,
	\end{equation*}
	where $\mathbb{T}_\theta'$ is the set of $T$ for which
	\begin{equation*}
		C_{\mathfrak{p}}(\log R)^8|T|^{-1}\int_T|f_\theta|^2\gtrsim\frac{\alpha^2}{(\#\theta)^2}
	\end{equation*}
	for a suitable pruning constant $C_{\mathfrak{p}}$. If we apply the $L^4$ square function estimate/Plancherel/dyadic pigeonholing argument outlined above to $f_N$, then the contribution of the integral along $|\xi|\sim r$ of $f_N$ will be acceptable for Theorem \ref{squarefunction} when $r\gtrapprox R^{-1/2}$.
	
	However, the other annular integrals will involve wave envelopes of other dimensions which have not yet been pruned, and it will be necessary to consider deeper prunings. In particular, if we decompose $f_N=f_{N-1}+f_N^\mathcal{B}$ by defining
	\begin{equation*}
		f_{N-1}=\sum_{\theta}f_{N-1,\theta},
	\end{equation*}
	with $f_{N-1,\theta}$ equal to the sum of the wave envelopes of scale $\sim 2R^{1/2}\times R$ with appropriately high amplitude square functions, then more of the integrals of $f_{N-1}$ will be acceptable; on the other hand, since $f_N^\mathcal{B}$ is high-amplitude on small wave packets and low-amplitude on larger wave packets, it must be that $f_N^\mathcal{B}$ is dominated by high-frequency contribution (as otherwise low-dominance would imply sufficient local constancy to reach a contradiction).
	
	Proceeding inductively, we replace $f$ by a sum of $N$ functions 
    \begin{equation*}
        f=f_1+\sum_{m=2}^Nf_m^\mathcal{B},
    \end{equation*}
    where the ``bad'' functions $f_m^\mathcal{B}$ have acceptable high-frequency contributions and are also dominated by those contributions, and where the lowest function $f_1$ satisfies the wave envelope estimate by construction.

    We now indicate what is needed to obtain a logarithmic loss in Theorem \ref{squarefunction}. We refine the argument of \cite{GM1} by applying a modified broad/narrow argument and a modified pigeonholing, which are chosen to avoid superlogarithmic losses. We also choose a longer and gentler sequence of scales ($R_{k+1}/R_k= O(1)$ as opposed to $R^\eps$) to minimize the cost of applying the high lemmas. Each of these have appeared elsewhere in the literature before; for example, the broad/narrow argument is adapted from \cite{GMW}.

	The primary technical advantage in the current work is the use of wave packets with near-exponential decay, which permits one to improve Schwartz-type decay to decay of the form $e^{-|x|^{1-\eps}}$, while preserving compact support on the Fourier side; the details are offered in subsection \ref{gevpackets} below. Such decay on the spatial side is sufficient to prevent super-logarithmic losses in our setting, particularly when estimating the interference of parallel wave packets via Cauchy-Schwarz.
 
    We indicate briefly how these gains are manifested. Given a partition of unity $\{\psi_T\}_T$ made up of wave packets of dimensions $R^{1/2}\times R$, for which each $\psi_T$ is concentrated near $T\subseteq\R^2$ and $\widehat{\psi_T}$ is supported in $T^*$, we may fix some $x\in\R^2$ and unique $T\ni x$. The question of interference may be summarized as follows: for which constant $M$ does it hold that
    \begin{equation*}
        \sum_{T'\|T:T'\cap MT=\emptyset}\psi_{T'}(x)\lesssim R^{-1000}\quad ?
    \end{equation*}
    Knowing only a Schwartz decay on $\psi_T(x)=\psi(R^{-1/2}x_1,R^{-1}x_2)$ (say), $|\psi(x)|\lesssim_D|x|^{-D}$, we may only conclude that $M\approx_\eps R^\eps$ suffices. If $\psi$ decays at a Gaussian rate (which is inconsistent with the compact Fourier support condition), then we may take $M=O(\log R)$. If $\psi$ decays at a slower rate $|\psi(x)|\leq e^{-c|x|^{1/2}}$, we may take $M=O((\log R)^3)$, which suffices for our purposes. Due to technical obstructions in further arranging for $\psi$ to be positive, we slightly weaken the exponent $\frac{1}{2}$ to $\frac{1}{2.2}$.
 
    The authors of \cite{GMW} handled this issue by appealing to wave packets defined by Gaussian weights, which possess the technical difficulty of having noncompact Fourier support. Note too that, by analyticity, the decay $e^{-c|x|^{1/2}}$ could not be improved to $e^{-c|x|}$.

    \subsection{Construction of the rapidly-decaying wave packets}\label{gevpackets}
    
    We will need a partition of unity composed of wave packets which decay almost exponentially, and which have compact Fourier support. We will make critical use of the \textit{Gevrey class} $G^s(\R^n)$ of functions, which may precisely be defined as
    \begin{equation*}
        G^s(\R^n)=\{g\in C^\infty(\R^n;\C):\exists C>0\text{ s.t. }|\partial^{\alpha}f(x)|\leq C^{|\alpha|+1}|\alpha!|^s\,\,\forall\alpha\in\Z_{\geq 0}^n,\forall x\in\R^n\}
    \end{equation*}
    Here $1\leq s<+\infty$, and for a multi-index $\alpha\in\Z_{\geq 0}^n$ we set $|\alpha|=\sum_{j=1}^n\alpha_j,\alpha!=\alpha_1!\cdots\alpha_n!$.

    One may readily observe that $G^s(\R^2)$ is a vector space and is closed under pointwise multiplication and differentiation. The class $G^1(\R^2)$ coincides with the class $A(\R^2)$ of functions which locally are locally given by convergent power series in the variables $x,y$. On the other hand, the classes $G^s(\R^2)$ ($s>1$) are strictly broader.

    We show a convenient construction for our purposes. Let $a_1\leq a_2\leq \ldots$ be an increasing sequence of positive reals whose reciprocals are summable: $a=\sum_ja_j^{-1}<\infty$. Let $H_{a_j}$ be the auto-convolution of a suitable $\mathrm{rect}$ function with itself:
    \begin{equation*}
        H_{a_j}=\Big(a_j1_{[-\frac{1}{2a_j},\frac{1}{2a_j}]}\Big)*\Big(a_j1_{[-\frac{1}{2a_j},\frac{1}{2a_j}]}\Big).
    \end{equation*}
    Then the sequence of functions $u_k=H_{a_1}*\cdots*H_{a_k}$ converges to a smooth $u$ supported in $[-a,a]$, which moreover satisfies the derivative estimates
    \begin{equation*}
        \|u^{(k)}\|_\infty\leq 2^k\prod_{j=1}^{k+1}a_j^2.
    \end{equation*}
    See \cite{hormander1963linear}, pages 19-20. Moreover, one may show that $\hat u\geq 0$ everywhere. Choose
    \begin{equation*}
        a_j=C\eps^{-1}(j+1)^{1.1}.
    \end{equation*}
    It follows that $u\in G_0^{2.2}(\R)$. By tuning $C$, we may assume that $\mathrm{supp}(u)\subseteq[-\frac{1}{2},\frac{1}{2}]$. By scaling $u$, we may take $u(0)=1$. We write $\rho_0=u\otimes u\in G^{2.2}(\R^2)$. Then $\rho_0$ satisfies the following properties:
		\begin{itemize}
			\item[(a)] $\text{supp}(\rho_0)\subseteq[-\frac{1}{2},\frac{1}{2}]^2$.
			\item[(b)] $\rho_0^\vee\geq 0$ everywhere.
			\item[(c)] $\rho_0(0)=1$.
		\end{itemize}
    
    Write $G_0^s(\R^2)=G^s(\R^2)\cap C_0^\infty(\R^2)$ for the Gevrey-class functions of compact support. One of the critical properties of this class is the following:

    \begin{theorem}[Theorem 1.2(i) of \cite{HR}] Assume $g\in G_0^s(\R^2)$. Then there exist $C,\epsilon>0$ such that
    \begin{equation*}
        |\hat{g}(\xi)|\leq C\exp\left(-\epsilon|\xi|^{1/s}\right)
    \end{equation*}
    for all $\xi\in\R^2$.
    \end{theorem}

    Since $\rho_0\in G_0^{s+\eps}(\R^2)$, we may append the following:
    \begin{itemize}
        \item[(d)] $|\rho_0^\vee(x)|\leq Ce^{-c|x|^{1/2.2}}$,
    \end{itemize}
    for suitable $c,C$.

    The preceding is standard in microlocal analysis of PDEs, see e.g. \cite{HR} for an overview of the methodology and \cite{BLR} for a sample application to scattering theory. The classes were introduced in \cite{G}. They generally serve as useful interpolants between analytic functions and smooth functions.

    We make use of this function $\rho_0$ to construct a suitable partition of unity:
 
	\begin{definition}[Sufficiently rapid cutoffs] Fix a small constant $\epsilon_0>0$. Let $\rho_0$ be any function satisfying the properties (a)-(d) above. For each parallelogram $T$, let $\rho_T=\rho_{0}\circ R_T$, where $R_T$ is an affine transformation that scales and rotates $T$ to $[-\frac{1}{2},\frac{1}{2}]^2$. Define also $\psi_T(x)=|T|^{-1}\rho_{T^*}^\vee(x-c_T)$, where $c_T$ is the center of $T$.

    \end{definition}
  
        Observe that $\rho_T(c_T)=1$ and $\rho_T=0$ outside of $T$. Observe from the outset that $\|\rho_T^\vee\|_1=\|\rho_{0}^\vee\|_1=O(1)$ by change-of-variable.

        \begin{proposition}[Existence of a Gevrey-class partition of unity] Let $T$ be a parallelogram and $\{U\|T\}$ be the fundamental tiling of $\R^2$ by translates $U$ of $T$. Then the functions $\{\psi_U:U\|T\}$ form a partition of unity in $\R^2$.
            
        \end{proposition}
        \begin{proof}

        Note that the set of centers $\{c_U:U\|T\}$ form a lattice, and the centers $\{c_V:V\|T^*\}$ form the dual lattice. By the Poisson summation formula,
		\begin{equation*}
			\sum_{V\|T^*}\rho_{T^*}^\vee(x-c_V)=\frac{1}{|T|}\sum_{U\|T}e^{2\pi ix\cdot c_U}\rho_{T^*}(c_U)=\frac{1}{|T|}.
		\end{equation*}
            
        \end{proof}

        The preceding will be used to decompose our function $f$ below.
	
	\subsection{Initial notation-setting}
	
	We begin by reproducing some of the language of \cite{GM1}, with minor modifications. Fix arbitrary $\alpha>0$, and $R\in 7^{2^\N}$ sufficiently large; we will occasionally assume that $R$ is large enough that $\log\log R$ exceeds a universal constant. Throughout the paper, we will use $B_R$ to denote the ball of radius $R$ centered at $0$. Let $U_\alpha=\{x\in B_R:|f(x)|>\alpha\}$.
	
	We will need a sequence of scales. Let $N$ be the least integer greater than or equal to $\frac{1}{2}\frac{\log R}{\log 7}$. Let $R_k:=7^{k}$ for $k=0,\ldots, N-1$, and define $R_N:=R^{1/2}$.
	
	Next, let $\{\theta\}$ be a partition of $\mathcal{N}_{R^{-1}}(\mathbb{P}^1)$ by approximate $R^{-1/2}\times R^{-1}$ rectangles, and similarly let $\{\tau_k\}$ be a partition of $\mathcal{N}_{R_k^{-1}}(\mathbb{P}^1)$ by approximate $R_k^{-1}\times R_k^{-2}$ rectangles; here and throughout the paper, the notations $\tau_N$ and $\theta$ are interchangeable. We assume that for $k<k'$ and each choice of $\tau_k,\tau_{k'}$ we either have $\tau_{k'}\subseteq\tau_{k}$ or $\tau_{k}\cap\tau_{k'}=\emptyset$. We also write $\tau_0$ for the full $\mathcal{N}_{R^{-1}}(\mathbb{P}^1)$. Furthermore, for each $1\leq k\leq N$ and $\tau_k$, we will write $\tilde\tau_k$ for the union of $\tau_k$ and its immediate neighbors within $\tau_{k-1}\supseteq\tau_k$. If $k=0$, we write $\tilde\tau_0=\tau_0$.
	
	By scaling, it will suffice to consider the case when $\max_\theta\|f_\theta\|_\infty=1$; since we are bounding $|U_\alpha|$, we will assume also that $\alpha\leq R^{1/2}$. By considering the summand on the right-hand side of the inequality in Theorem \ref{squarefunction} corresponding to $s=1$, it suffices to consider the case $\alpha\geq 1$.
	
	For each point $p\in\mathbb{P}^1$, let $\mathbf{t}_p$ be the tangent vector to $\mathbb{P}^1$ at $p$ pointing in the positive-$x$ direction. Similarly, write $\mathbf{n}_p$ for the normal vector to $\mathbb{P}^1$ at $p$ pointing in the positive-$y$ direction.
	
	For each fixed $\tau_k$, we will let $U_{\tau_k,R}$ be a rectangle of dimensions $(R/R_k)\times R$ with long side parallel to $\mathbf{n}_{c_{\tau_k}}$. Fix also a tiling of $\R^2$ by translates $U$ of $U_{\tau_k,R}$; we will denote the relationship between $U$ and $U_{\tau_k,R}$ by $U\|U_{\tau_k,R}$, so that the tiling just described is the set $\{U\|U_{\tau_k,R}\}$.
	
	We will relate different square functions by means of analyzing their high- and low-frequency components. To this end, set $\varphi$ to be a smooth nonnegative radial bump function on $\R^2$ such that $\varphi(\xi)=1$ on $|\xi|\leq 1$ and $\varphi(\xi)=0$ on $|\xi|\geq 2$. For each $r>0$, we define the cutoff functions
	\begin{equation*}
		\eta_{\leq r}(\xi)=\varphi(r^{-1}\xi),\quad \eta_{>r}(\xi)=\varphi(\xi)-\varphi(r^{-1}\xi),\quad \eta_{\sim r}(\xi)=\varphi(r^{-1}\xi)-\varphi(2r^{-1}\xi).
	\end{equation*}
	Note in particular that $\eta_{\leq r}(\xi)=1$ on $|\xi|\leq r$ and $\eta_{\leq r}(\xi)=0$ on $|\xi|>2r$, and $\eta_{>r}(\xi)=1$ on $2r<|\xi|\leq 1$ and $\eta_{>r}(\xi)=0$ on $|\xi|\in(0,r)\cup(2,\infty)$.
	
	Next, for $U\|U_{\tau_k,R}$ let $W_U$ denote the composition $(W\circ T_{\tau_k})(x-c_U)$, where
	\begin{equation*}
		W(x,y):=\frac{1}{(1+|x|^2)^{100}(1+|y|^2)^{100}},
	\end{equation*}
	and $T_{\tau_k}$ is the linear transformation which rotates $2U_{\tau_k,R}$ to $[-R/R_k,R/R_k]\times[-R,R]$ and then rescales to $[-1,1]^2$. We define $\strokedint_Ug:=|U|^{-1}\int gW_U$ for arbitrary $g$. Since $W$ decays polynomially, we may assume $\psi_U\lesssim W_U$ for every choice of $U$.
	
	Next, for each $k$, let $w_k$ be the weight
	\begin{equation*}
		w_k(x)=\frac{c}{(1+|x|^2R_k^{-1})^{10}}
	\end{equation*}
	with $c$ chosen so that $\|w_k\|_1=1$.

	\subsection{Pruning}
	
	For suitable constant\footnote{The size of $C_{\mathfrak{p}}$ is only constrained by the proof of Lemma \ref{highdomlemma}.} $C_{\mathfrak{p}}>0$, we define the pruned set $\mathcal{G}_\theta$ associated to $\theta$ as follows.
	
	\begin{definition} Set\footnote{Recall from above that we have repurposed the symbol $\strokedint_U\cdot$ to mean $|U|^{-1}\int W_U\cdot$}
		\begin{equation*}
			\mathcal{G}_\theta:=\left\{U\|U_{\theta,R}:C_{\mathfrak{p}}(\log R)^{8}\strokedint_U|f_\theta|^2\geq\frac{\alpha^2}{(\#\theta)^2}\right\}.
		\end{equation*}
		Define the pruned functions as
		\begin{equation*}
			f_{N,\theta}:=\sum_{U\in\mathcal{G}_\theta}\psi_Uf_\theta,\quad f_N:=\sum_\theta f_{N,\theta}.
		\end{equation*}
		For $k<N$ and each $\tau_k$, define
		\begin{equation*}
			\mathcal{G}_{\tau_k}:=\left\{U\|U_{\tau_k,R}:C_{\mathfrak{p}}(\log R)^{8}\strokedint_U\sum_{\theta\subseteq\tau_k}|f_{k+1,\theta}|^2\geq\frac{\alpha^2}{(\#\tau_k)^2}\right\}
		\end{equation*}
		and
		\begin{equation*}
			f_{k,\theta}:=\sum_{U\in\mathcal{G}_{\tau_k}}\psi_Uf_{k+1,\theta}\,\,(\text{where }\tau_k\supseteq\theta)\quad\text{and}\quad f_k=\sum_{\theta}f_{k,\theta}.
		\end{equation*}
		We set also $f_k-f_{k-1}=:f_k^{\mathcal{B}}$, and $f_{k,\theta}^{\mathcal{B}}=\sum_{U\not\in\mathcal{G}_{\tau_{k-1}}}\psi_Uf_{k,\theta}$, where $\theta\subseteq\tau_{k-1}$. If $k'\leq k$, then set $f_{k,\tau_{k'}}^{\mathcal{B}}=\sum_{\theta\subseteq\tau_{k'}}f_{k,\theta}^{\mathcal{B}}$.
	\end{definition}
	
	The following estimates will be needed:
	\begin{lemma}[Pruning lemmas]\label{pruninglemmas} The pruned functions satisfy the following:
		
		\begin{enumerate}
			\item[(a)] $f_N=f_1+\sum_{m=2}^Nf_m^{\mathcal{B}}$.
			\item[(b)] $|f_{k,\theta}|\leq|f_{k+1,\theta}|\leq|f_\theta|$.
			\item[(c)] $\mathrm{supp}\big(\widehat{f_{k,\theta}}\big)\subseteq 2(N-k)\theta$ for all $\theta$.
		\end{enumerate}
	\end{lemma}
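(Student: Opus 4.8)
The plan is to treat the three items in increasing order of difficulty: (a) is a formal telescoping identity, (c) is bookkeeping of Fourier supports, and essentially all the work is in (b). For (a), recall that by definition $f_m^{\mathcal{B}}=f_m-f_{m-1}$ for $2\le m\le N$, so that $\sum_{m=2}^{N}f_m^{\mathcal{B}}$ telescopes to $f_N-f_1$; adding $f_1$ to both sides gives the claimed identity.

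For (c) I would induct on $N-k$, the point being to bound the effect of a single pruning step. Since $f_{k,\theta}=\sum_{U\in\mathcal{G}_{\tau_k}}\psi_U f_{k+1,\theta}$ one has $\widehat{f_{k,\theta}}=\sum_{U\in\mathcal{G}_{\tau_k}}\widehat{\psi_U}\ast\widehat{f_{k+1,\theta}}$, and $\widehat{\psi_U}(\xi)=e^{-2\pi ic_U\cdot\xi}\rho_{U^*}(\xi)$ is supported in the origin-symmetric box $2U^*$; hence $\mathrm{supp}(\widehat{f_{k,\theta}})\subseteq\mathrm{supp}(\widehat{f_{k+1,\theta}})+2U^*$. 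The geometric point is that $2U^*$ fits inside a bounded dilate of $\theta$: the box $U^*=(U_{\tau_k,R})^*$ is centered at $0$ with sidelengths $R_k/R$ and $1/R$ in the directions $\mathbf{t}_{c_{\tau_k}}$ and $\mathbf{n}_{c_{\tau_k}}$, and since $R_k\le R^{1/2}$ these are at most the sidelengths $R^{-1/2}$ and $R^{-1}$ of $\theta$, while the $O(R_k^{-1})$ angular aperture of $\tau_k\supseteq\theta$ tilts $U^*$ in the transverse direction by at most $O(R^{-1})$, comparable to the thin side of $\theta$ and thus harmless. So each pruning step enlarges the Fourier support by at most a bounded multiple of $\theta$; starting from $\mathrm{supp}(\widehat{f_\theta})\subseteq\theta$ and iterating over the $N-k$ steps keeps $\mathrm{supp}(\widehat{f_{k,\theta}})$ inside $2(N-k)\theta$, with some slack in the constant (only its boundedness by a power of $\log R$ matters downstream).

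Claim (b) is the substantive one. I would prove the links of the chain $|f_{k,\theta}|\le|f_{k+1,\theta}|\le|f_\theta|$ by exploiting that $f_{k,\theta}$ retains only a subcollection of the wave packets making up $f_{k+1,\theta}$. The two ingredients are the reproducing identity $f_{k+1,\theta}=\sum_{U\|U_{\tau_k,R}}\psi_U f_{k+1,\theta}$, which comes from the Poisson-summation partition of unity attached to the cutoffs $\rho$, and the spatial concentration of each $\psi_U$ on its tile $U$, the tiles $\{U\|U_{\tau_k,R}\}$ being pairwise disjoint. Granting these, at a point $x$ in the tile $U_0$ one has $\psi_U(x)$ negligible for $U\ne U_0$, so $f_{k,\theta}(x)=\sum_{U\in\mathcal{G}_{\tau_k}}\psi_U(x)f_{k+1,\theta}(x)$ agrees with $\mathbf{1}[U_0\in\mathcal{G}_{\tau_k}]\,f_{k+1,\theta}(x)$ up to a small error from the tails of the $\psi_U$, whence $|f_{k,\theta}(x)|\le|f_{k+1,\theta}(x)|$ up to that error. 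Running the same argument with $\mathcal{G}_\theta$ and the tiling $\{U\|U_{\theta,R}\}$ gives $|f_{N,\theta}|\le|f_\theta|$, and composing the links down from scale $N$ yields both displayed inequalities.

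The main obstacle is exactly the phrase ``up to a small error'' above. A merely Schwartz tail on the $\psi_U$ would cost a constant factor at each of the $N\sim\log R/\log\log R$ pruning levels, and these constants would compound into an $R^{o(1)}$ loss, which is fatal for a logarithmic final constant. This is precisely why the cutoffs carry the near-exponential decay $|\rho_0^\vee(x)|\lesssim e^{-c|x|^{1/2}}$ and why $\psi_U\lesssim W_U$ is arranged: one may then sum the entire tail contribution, tested against the weights $W_U$ over all $U$ and all $N$ levels, into a single error of size $O(R^{-1000})$, which is carried along harmlessly and discarded at the end, so that the clean pointwise inequalities of (b) hold as stated.
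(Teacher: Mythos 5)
Parts (a) and (c) are fine and match the paper's argument: (a) is the same telescoping, and (c) is the same accounting of how each multiplication by $\psi_U$ convolves the Fourier support with $2U_{\tau_j}^*$, each of which fits inside a bounded dilate of $\theta$ (you correctly address the small angular tilt of $U^*$ relative to $\theta$, which the paper leaves implicit).

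Part (b) is where there is a genuine gap, and it stems from overlooking the one-line observation that makes the inequality \emph{exact}. By definition
\[
f_{k,\theta}=\Bigl(\sum_{U\in\mathcal{G}_{\tau_k}}\psi_U\Bigr)\,f_{k+1,\theta},
\]
so the inequality $|f_{k,\theta}|\le|f_{k+1,\theta}|$ is simply the pointwise bound $\bigl|\sum_{U\in\mathcal{G}_{\tau_k}}\psi_U\bigr|\le 1$. Since the $\psi_U$ are real, nonnegative, and (after the normalization built into the definition of $\rho_T$) form a partition of unity over the full tiling $\{U\|U_{\tau_k,R}\}$, any subsum satisfies $0\le\sum_{U\in\mathcal{G}_{\tau_k}}\psi_U\le\sum_{U}\psi_U=1$. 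That is the entire proof; the same factoring with $\mathcal{G}_\theta$ gives $|f_{N,\theta}|\le|f_\theta|$. Your route instead localizes to a single tile $U_0$ and argues that $\psi_U(x)$ is negligible for $U\ne U_0$, which only yields $|f_{k,\theta}(x)|\le|f_{k+1,\theta}(x)|+O(R^{-1000})$. That is not the statement of (b), and the error cannot be ``discarded at the end'': (b) is applied pointwise and iterated $N$ times in the pruning chain, so any additive slack would propagate into the later estimates (and, at minimum, would require reformulating the lemma). Moreover, the concern you raise about Schwartz tails compounding to an $R^{o(1)}$ loss over $N$ levels is a problem your argument creates and then partially patches; the partition-of-unity argument dissolves it entirely. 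The near-exponential decay of $\rho_0^\vee$ does matter elsewhere in the paper (e.g.\ in the parallel-wave-packet interference estimates and the Replacement Lemma), but it is not what makes (b) work: what makes (b) work is nonnegativity of the $\psi_U$ together with the fact that a subsum of a nonnegative partition of unity is between $0$ and $1$.
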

	\begin{proof}
		(a): This is just the calculation
		\begin{equation*}
			f_1+\sum_{m=2}^Nf_m^\mathcal{B}=f_1+\sum_{m=2}^N(f_m-f_{m-1})=f_N.
		\end{equation*}
		
		(b): Since $\sum_{U\in\mathcal{G}_{\tau_k}}\psi_U\leq 1$, it follows that
		\begin{equation*}
			|f_{k,\theta}|=|f_{k+1,\theta}|\big|\sum_{U\in\mathcal{G}_{\tau_k}}\psi_U\big|\leq |f_{k+1,\theta}|,
		\end{equation*}
		and similarly
		\begin{equation*}
			|f_{N,\theta}|=|f_\theta|\sum_{U\in\mathcal{G}_{\tau_N}}\psi_U\leq |f_\theta|.
		\end{equation*}
		
		(c): We first consider the case $k=N$. For each $\theta$ and $U\|U_{\theta,R}$,
		\begin{equation*}
			\begin{split}
				\widehat{\psi_Uf_\theta}(\xi)=\int\widehat{\psi_U}(\eta)\hat{f}_\theta(\xi-\eta)d\eta,
			\end{split}
		\end{equation*}
		which vanishes when there does not exist $\eta\in 2U^*\subseteq B(0,2R^{-1})$ such that $\xi-\eta\in\theta$, i.e. when $\xi\not\in \mathcal{N}_{2R^{-1}}\theta$. Thus $f_{N,\theta}$ has Fourier support in $\theta+B(0,2R^{-1})$.
		
		More generally, the same calculation gives
		\begin{equation*}
			\operatorname{supp}(\hat{f}_{k,\theta})\subseteq\theta+2\sum_{j=0}^{N-k}U_{\tau_{N-j}}^*,
		\end{equation*}
		where $\tau_{N-j}$ is the cap of size $R_{N-j}^{-1}\times R_{N-j}^{-2}$ containing $\theta$. In particular,
		\begin{equation*}
			\operatorname{supp}(\hat{f}_{k,\theta})\subseteq 2(N-k)\theta,
		\end{equation*}
		as claimed.
	\end{proof}
	
	\subsection{Square functions}
	
	In this section, we record a series of lemmas that control the contribution of square functions at various scales. The proofs of these are standard, and have been delayed to the appendix.
	
	Our first lemma encodes that our frequency-localized functions $f_\theta$ and $f_{m,\theta}^\mathcal{B}$ are approximately constant on small scales.
	\begin{lemma}[Pointwise local constancy lemmas]\ 
		\begin{enumerate}
			\item[(a)] For any $\theta$, $|f_\theta|^2\lesssim|f_\theta|^2*|\rho_\theta^\vee|$.
			\item[(b)] For any $k,m$ and any $x$,
			\begin{equation*}
				|f_{m,\tau_k}|^2(x)\lesssim |f_{m,\tau_k}|^2*w_{R_k}(x).
			\end{equation*}
		\end{enumerate}
	\end{lemma}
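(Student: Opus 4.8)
Both statements follow the same template: the function whose square we estimate has Fourier support dual to the scale at which we claim local constancy, so it equals its own convolution with a suitable bump, and that bump is dominated by the kernel appearing on the right-hand side. For (a), note that $\widehat{f_\theta}=\widehat f\,\mathbbm{1}_\theta$ is supported in $\overline\theta$, while $\rho_\theta\equiv 1$ on the dilate $(2-2\epsilon_0)\theta$, which contains $\overline\theta$ with room to spare; hence $\widehat{f_\theta}\rho_\theta=\widehat{f_\theta}$, i.e. $f_\theta=f_\theta*\rho_\theta^\vee$. Then
\[
|f_\theta(x)|\le\int|f_\theta(y)|\,|\rho_\theta^\vee(x-y)|\,dy\le\Big(\int|f_\theta(y)|^2|\rho_\theta^\vee(x-y)|\,dy\Big)^{1/2}\Big(\int|\rho_\theta^\vee(x-y)|\,dy\Big)^{1/2}
\]
by Cauchy--Schwarz, and squaring and invoking $\|\rho_\theta^\vee\|_1=\|\rho_0^\vee\|_1=O(1)$ from the definition of the cutoffs yields $|f_\theta(x)|^2\lesssim(|f_\theta|^2*|\rho_\theta^\vee|)(x)$.

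For (b), write $f_{m,\tau_k}=\sum_{\theta\subseteq\tau_k}f_{m,\theta}$ and locate its Fourier support. By Lemma~\ref{pruninglemmas}(c) (applied with pruning index $m$), each $\widehat{f_{m,\theta}}$ lives in $2(N-m)\theta\subseteq(\log R)\theta$, a box of dimensions $\sim(\log R)R^{-1/2}\times(\log R)R^{-1}$; since the $\theta\subseteq\tau_k$ essentially tile the $R_k^{-1}\times R_k^{-2}$ slab $\tau_k$, the union $\bigcup_{\theta\subseteq\tau_k}(\log R)\theta$ sits inside a ball $B(0,\rho_k)$ with $\rho_k\lesssim\max\!\big(R_k^{-1},(\log R)R^{-1/2}\big)$, and because $R_k\le R_N=R^{1/2}$ one checks $R^{-1}\le R_k^{-2}$ and $(\log R)R^{-1/2}\le R_k^{-1/2}$ for $R$ large, so in fact $\rho_k\lesssim R_k^{-1/2}$. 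Consequently $\widehat{|f_{m,\tau_k}|^2}$ is supported in $B(0,2\rho_k)$. Choose $\zeta$ with $\widehat\zeta(\xi)=\varphi(cR_k^{1/2}\xi)$ for a small universal constant $c$, so that $\widehat\zeta\equiv1$ on $B(0,2\rho_k)$ and hence $|f_{m,\tau_k}|^2=|f_{m,\tau_k}|^2*\zeta$; here $\zeta(x)=c^{-2}R_k^{-1}\varphi^\vee(c^{-1}R_k^{-1/2}x)$ is an $L^1$-normalized bump concentrated at the spatial scale $R_k^{1/2}$, and since $\varphi^\vee$ is Schwartz we may bound $|\zeta(x)|\lesssim R_k^{-1}(1+R_k^{-1/2}|x|)^{-20}$. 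The normalization $\|w_{R_k}\|_1=1$ forces the constant in $w_{R_k}$ to be $\sim R_k^{-1}$, so $w_{R_k}(x)\sim R_k^{-1}(1+R_k^{-1}|x|^2)^{-10}$, and the elementary comparison $(1+t^2)^{10}\le(1+t)^{20}\le 2^{10}(1+t^2)^{10}$ gives $|\zeta|\lesssim w_{R_k}$. Therefore
\[
|f_{m,\tau_k}|^2(x)=\big(|f_{m,\tau_k}|^2*\zeta\big)(x)\le\big(|f_{m,\tau_k}|^2*|\zeta|\big)(x)\lesssim\big(|f_{m,\tau_k}|^2*w_{R_k}\big)(x),
\]
which is (b). The case $k=0$ is immediate, all scales then being $O(1)$.

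The one delicate point is the scale arithmetic in (b): the Fourier support of $|f_{m,\tau_k}|^2$ lies at scale $\rho_k\lesssim\max(R_k^{-1},(\log R)R^{-1/2})$, so the natural local-constancy scale is the rather coarse $\rho_k^{-1}$, whereas the target weight $w_{R_k}$ is concentrated at the strictly finer scale $R_k^{1/2}$; one must build the reproducing bump $\zeta$ at scale exactly $R_k^{1/2}$ — a coarser $\zeta$ would be too tall near the origin, a finer one would not decay fast enough — and this is possible precisely because the chosen sequence $R_k=(\log R)^k$ for $k<N$, $R_N=R^{1/2}$, ensures $R_k^{1/2}\le R_k\wedge\big(R^{1/2}/\log R\big)\lesssim\rho_k^{-1}$. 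Everything else — the triangle inequality, Cauchy--Schwarz in (a), and the Fourier-support bookkeeping through Lemma~\ref{pruninglemmas}(c) — is routine, which is why the proofs are deferred to the appendix.
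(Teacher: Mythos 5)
Your proof of part (a) is the same as the paper's: one observes $\widehat{f_\theta}\rho_\theta=\widehat{f_\theta}$ so $f_\theta=f_\theta*\rho_\theta^\vee$, then splits $|\rho_\theta^\vee|=|\rho_\theta^\vee|^{1/2}|\rho_\theta^\vee|^{1/2}$ and applies Cauchy--Schwarz together with $\|\rho_\theta^\vee\|_1=O(1)$. (The paper's displayed formula writes $|\rho_\theta|^{1/2}$ where $|\rho_\theta^\vee|^{1/2}$ is clearly meant, so this is the identical computation.)

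For part (b) you take a genuinely different, and in fact more careful, route. The paper bounds the Fourier support of $|f_{m,\tau_k}|^2$ by $\bigcup_{\theta,\theta'\subseteq\tau_k}(N-m+2)(\theta-\theta')\subseteq B_{\frac{1}{2}(\log R)R_k^{-1}}$, takes a smooth cutoff $\rho_k$ supported in $B_{(\log R)R_k^{-1}}$, and asserts $|\rho_k^\vee|\lesssim w_{R_k}$. That last assertion does not hold as stated: $\rho_k^\vee$ is an $L^1$-normalized bump concentrated at spatial scale $R_k/\log R$ (so $\rho_k^\vee(0)\sim(\log R)^2 R_k^{-2}$), while $w_{R_k}$ is concentrated at the different scale $R_k^{1/2}$ with $w_{R_k}(0)\sim R_k^{-1}$; at $k=1$ the peak of $\rho_k^\vee$ is a factor $\log R$ too large, and for $k\geq 3$ the plateau of $\rho_k^\vee$ out to $|x|\sim R_k/\log R>R_k^{1/2}$ far exceeds the polynomial tail of $w_{R_k}$ there. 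What makes the lemma true nevertheless is exactly what you supply: the Fourier support is actually in a ball of radius $\lesssim\max(R_k^{-1},(\log R)R^{-1/2})\lesssim R_k^{-1/2}$, strictly smaller than the paper's $B_{\frac{1}{2}(\log R)R_k^{-1}}$, and a reproducing bump $\zeta$ built at Fourier scale $\sim R_k^{-1/2}$ / spatial scale $\sim R_k^{1/2}$ (neither coarser nor finer) matches both the height $R_k^{-1}$ and the decay $(1+R_k^{-1}|x|^2)^{-10}$ of $w_{R_k}$. Your ``delicate point'' paragraph isolates precisely this scale-matching constraint and explains why it is compatible with the sequence $R_k$, which the paper's proof glosses over. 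One small slip: you write that $\bigcup_{\theta\subseteq\tau_k}(\log R)\theta$ sits inside $B(0,\rho_k)$ — it sits inside a ball of radius $\rho_k$ about the center of $\tau_k$, not about the origin — but your next sentence, that $\widehat{|f_{m,\tau_k}|^2}$ lies in $B(0,2\rho_k)$ (via the difference set), is the correct and relevant conclusion, so nothing breaks. In short: (a) matches the paper; (b) is correct, tightens the Fourier-support accounting, and repairs a genuine imprecision in the paper's stated comparison $|\rho_k^\vee|\lesssim w_{R_k}$.
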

	
	Our second lemma serves as a shorthand for passing between several integrals that are essentially equivalent to the wave-envelope expansion.
	
	\begin{lemma}[Integrated local constancy lemmas]\label{intlocalconstlemma}Let $r>0$ be dyadic.\ 
		\begin{enumerate}
			\item[(a)] If $r\lesssim (\log R)R_{k}/R$, then
			\begin{equation*}
				\int\Big|\sum_{\theta\subseteq\tau_k}|f_{m,\theta}^\mathcal{B}|^2*\eta_{\sim r}^\vee\Big|^2\lesssim \int\Big| \sum_{\theta\subseteq\tau_k}|f_{m,\theta}^\mathcal{B}|^2*|\rho_{C(\log R)U_{\tau_k,R}^*}^\vee|\,\Big|^2,
			\end{equation*}
            where $U_{\tau_k,R}^*$ is a rectangle of dimensions $R_k/R\times R^{-1}$ centered at the origin with long edge parallel to $\mathbf{t}_{c_{\tau_k}}$.
			\item[(b)] If $k\geq m$, then
			\begin{equation*}
				\int\Big|\sum_{\theta\subseteq\tau_k}|f_{m,\theta}^\mathcal{B}|^2*|\rho_{(\log R)U_{\tau_k,R}^*}^\vee|\,\Big|^2\lesssim(\log R)^2\sum_{U\in\mathcal{G}_{\tau_k}}|U|\left(\strokedint_U\sum_{\theta\subseteq\tau_k}|f_\theta|^2\right)^2+R^{-100}.
			\end{equation*}
		\end{enumerate}
		
	\end{lemma}
	
	Next, we note that, on the superlevel set $\big\{|f|>\alpha\big\}$, it is possible to replace $f$ by $f_N$, so we may appeal to the decomposition $f_N=f_1+\sum_{m=2}^Nf_m^\mathcal{B}$.
	
	\begin{lemma}[Replacement lemma]\label{replacement} $|f(x)-f_N(x)|\lesssim\frac{\alpha}{C_{\mathfrak{p}}^{1/2}(\log R)^8}$.
	\end{lemma}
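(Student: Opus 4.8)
The plan is to unwind the pruning and reduce to a clean one-cap pointwise estimate. Since $\{\psi_U\}_{U\|U_{\theta,R}}$ is a partition of unity, $f_\theta=\sum_{U\|U_{\theta,R}}\psi_Uf_\theta$, so, writing $g_\theta:=f_\theta-f_{N,\theta}=\sum_{U\notin\mathcal{G}_\theta}\psi_Uf_\theta$, we have $f-f_N=\sum_\theta g_\theta$, the sum over the $\#\theta$ caps with $f_\theta\not\equiv 0$. By the triangle inequality it suffices to prove, for each such $\theta$ and every $x$,
\begin{equation*}
|g_\theta(x)|\lesssim\frac{\alpha}{C_{\mathfrak{p}}^{1/2}(\log R)^2\,(\#\theta)};
\end{equation*}
summing over the $\#\theta$ caps then gives $|f-f_N|\lesssim\alpha/(C_{\mathfrak{p}}^{1/2}(\log R)^2)$, which is even a factor of $\log R$ stronger than needed — the slack coming from the $(\log R)^4$ in the definition of $\mathcal{G}_\theta$.

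For the pointwise bound, note first that $\widehat{\psi_U}$ is supported in $2U_{\theta,R}^*$ and $\widehat{f_\theta}$ in $\theta$, so $g_\theta$ has Fourier support in a bounded dilate of $\theta$; hence, exactly as in the pointwise local constancy lemma applied to $g_\theta$ in place of $f_\theta$, we get $|g_\theta(x)|^2\lesssim(|g_\theta|^2*v_\theta)(x)$ for a nonnegative weight $v_\theta$ adapted to $U_{\theta,R}$, with near-exponential decay and $\int v_\theta=O(1)$. Now \emph{one should not bound $|g_\theta|\leq|f_\theta|$ here}: $f_\theta$ may be large precisely where the pruning has turned off. Instead keep the decomposition into bad wave packets and apply Cauchy--Schwarz in the tile sum, using $\sum_{U\|U_{\theta,R}}|\psi_U|=O(1)$:
\begin{equation*}
|g_\theta|^2=|f_\theta|^2\Big|\sum_{U\notin\mathcal{G}_\theta}\psi_U\Big|^2\lesssim|f_\theta|^2\sum_{U\notin\mathcal{G}_\theta}|\psi_U|.
\end{equation*}
Substituting and interchanging the sum with the convolution integral,
\begin{equation*}
|g_\theta(x)|^2\lesssim\sum_{U\notin\mathcal{G}_\theta}\int|f_\theta(y)|^2\,|\psi_U(y)|\,v_\theta(x-y)\,dy,
\end{equation*}
and a routine manipulation of these rapidly decaying weights — $|\psi_U|$ forces $y$ near $U$, and $v_\theta(x-\cdot)$ forces $y$ near $x$, both at scale $U_{\theta,R}$ — bounds the $U$th summand by $e^{-c\langle x,U\rangle^{1/2}}\strokedint_U|f_\theta|^2$ up to a constant, where $\langle x,U\rangle$ is one plus the $U_{\theta,R}$-rescaled distance from $x$ to $U$. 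Since $U\notin\mathcal{G}_\theta$ forces $\strokedint_U|f_\theta|^2<\alpha^2/(C_{\mathfrak{p}}(\log R)^4(\#\theta)^2)$ and $\sum_{U\|U_{\theta,R}}e^{-c\langle x,U\rangle^{1/2}}=O(1)$, this gives $|g_\theta(x)|^2\lesssim\alpha^2/(C_{\mathfrak{p}}(\log R)^4(\#\theta)^2)$, which after a square root is the claimed estimate. (The degenerate case $\mathcal{G}_\theta=\emptyset$ is included automatically, since there the claimed bound already exceeds $\|f_\theta\|_\infty^2$.)

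The step I expect to be the crux is exactly the one flagged above. The hypothesis $U\notin\mathcal{G}_\theta$ gives no control on $|f_\theta(x)|$ itself — a bad tile $U$ can lie a bounded number of tiles from one on which $f_\theta$ carries essentially all its mass — so the naive bound $|g_\theta|\le|f_\theta|$ is useless. What rescues the argument is that each \emph{individual} bad wave packet $\psi_Uf_\theta$ is small in the weighted average $\strokedint_U$ (merely because $U\notin\mathcal{G}_\theta$), and one then resums these contributions over all bad tiles at an $O(1)$ cost; this resummation is precisely where the better-than-Schwartz decay $|\rho_0^\vee(x)|\leq e^{-c|x|^{1/2}}$ is needed, as it is what keeps the interference of the parallel packets $\{\psi_Uf_\theta\}_{U\|U_{\theta,R}}$ under control after the Cauchy--Schwarz step — Schwartz decay alone would leak a power of $\log R$ here. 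Beyond this local-constancy-plus-Cauchy--Schwarz estimate, nothing further is required.
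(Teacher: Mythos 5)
Your proof is correct and follows essentially the same mechanism as the paper's: local constancy (justified by the compact Fourier support of the bad part), a Cauchy--Schwarz step to control the sum over tiles $U\|U_{\theta,R}$, the pruning hypothesis $U\not\in\mathcal{G}_\theta$ to bound each local average $\strokedint_U|f_\theta|^2$, and resummation of the near-exponentially decaying tails over $U$ at $O(1)$ cost before summing over $\theta$ to cancel the $\#\theta$. The bookkeeping differs slightly — you apply Cauchy--Schwarz pointwise to $|\sum_{U\notin\mathcal{G}_\theta}\psi_U|^2$ before convolving, whereas the paper applies local constancy to each $\psi_Uf_\theta$ first and Cauchy--Schwarz in the resulting sum of square roots — but this is a reorganization of the same estimate, and your remark about why $|g_\theta|\leq|f_\theta|$ is useless and why the near-exponential decay is needed in the resummation accurately identifies the crux of the paper's proof as well.
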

	
	As a consequence, we will be able to control the size of the superlevel set $U_\alpha$ by the size of the auxiliary set $V_\alpha:=\{x:|f_N(x)|>\frac{1}{2}\alpha\}$.
	
	For the next lemma, we will need to define an adjacency relation.
	\begin{definition}\label{neardef}
		For caps $\tau_k,\tau_k'$ of the same size, we say ``$\tau_k\text{ near }\tau_{k'}$'' if $\text{dist}(\tau_k,\tau_{k'})\lesssim(\log R)\text{diam}(\tau_k)$ for a suitably chosen implicit constant. If $\tau_k,\tau_k'$ do not satisfy this, we write ``$\tau_k\text{ not near }\tau_k'$''.
	\end{definition}
	
	\begin{remark}
		As defined, we have that for each $\tau_k$, $\#\{\tau_k':\tau_k\text{ near }\tau_k'\}\lesssim\log R$.
	\end{remark}
	\begin{remark}\label{nearcomp}
		If $\tau_k\text{ near }\tau_k'$, then $\tau_k\subseteq C\log R(\tau_k'+(c_{\tau_k}-c_{\tau_k'}))$ and symmetrically.
	\end{remark}
	
	We now mention the two key lemmas that facilitate an efficient wave-envelope estimate. These are standard in high/low calculations, e.g. \cite{DGW} (in the proof of Theorem 5.4), \cite{GWZ} (in the proof of Lemma 1.4), \cite{FGM} (Lemmas 11, 12, 13), \cite{GGGHMW} (in the proof of Theorem 5), and \cite{GM1} (Lemmas 4, 5, 6). 
	
	\begin{lemma}[Low lemma]\label{lowlemma}
		For any $2\leq m\leq k\leq N$, $0\leq s\leq k$, and $r\leq (\log R)R_k^{-1}$,
		\begin{equation*}
			|f_{m,\tau_{s}}^{\mathcal{B}}|^2*\eta_{\leq r}^\vee(x)=\sum_{\tau_k\subseteq\tau_s}\sum_{\tau_k':\tau_k\text{ near }\tau_k'}\Big(f_{m,\tau_k}^{\mathcal{B}}\overline{f_{m,\tau_k'}^{\mathcal{B}}}\Big)*\eta_{\leq r}^\vee(x)
		\end{equation*}
		for any $x$ and any $\tau_s$.
	\end{lemma}
	
	\begin{lemma}[High Lemmas]\label{highlemma} For any $m,k,s$, and $\ell$ such that $2\leq m\leq N$, $0\leq s\leq k$, and $k+\ell\leq N$, and any cap $\tau_s$,
		\begin{enumerate}
			\item[(a)] 
			\begin{equation*}
				\int\Big|\sum_{\theta\subseteq\tau_s}|f_{m,\theta}^{\mathcal{B}}|^2*\eta_{\geq R_k/R}^\vee\Big|^2\lesssim\log R\sum_{\tau_k\subseteq\tau_s}\int\Big|\sum_{\theta\subseteq\tau_k}|f_{m,\theta}|^2*\eta_{\geq R_k/R}^\vee\Big|^2,
			\end{equation*}
			\item[(b)]
			\begin{equation*}
				\int\Big|\sum_{\tau_k\subseteq\tau_s}|f_{m,\tau_k}^{\mathcal{B}}|^2*\eta_{\geq R_k^{-1}}^\vee\Big|^2\lesssim(\log R)\sum_{\tau_k\subseteq\tau_s}\int|f_{m,\tau_k}^{\mathcal{B}}|^4,
			\end{equation*}
			\item[(c)]
			\begin{equation*}
				\int\Big|\sum_{\tau_{k}\subseteq\tau_s}\sum_{\tau_k'\,\mathrm{ near }\,\tau_k}(f_{m,\tau_{k}}^\mathcal{B}\overline{f_{m,\tau_{k}'}^\mathcal{B}})*\eta_{\geq R_{k+\ell}^{-1}}^\vee\Big|^2\lesssim(\log R)^{3}R_\ell\sum_{\tau_k\subseteq\tau_s}\int|f_{m,\tau_k}^\mathcal{B}|^4.
			\end{equation*}
		\end{enumerate}
	\end{lemma}
	
	Next, we will need a tool to ensure that, when taking wave envelope contributions of the bad parts $f_m^\mathcal{B}$, we are allowed to disregard the low-frequency envelopes which have not yet been pruned.
	
	\begin{lemma}[Weak high-domination of bad parts]\label{highdomlemma} Let $2\leq m\leq N$ and $0\leq k<m$.
		
		\begin{enumerate}
			\item[(a)] We have the estimate
			\begin{equation*}
				\big|\sum_{\tau_{m-1}\subseteq\tau_k}|f_{m,\tau_{m-1}}^{\mathcal{B}}|^2*\eta_{\leq R_{m-1}/R}^\vee(x)\big|\lesssim\frac{\alpha^2(\#\tau_{m-1}\subseteq\tau_k)}{C_{\mathfrak{p}}(\log R)^2(\#\tau_{m-1})^2}.
			\end{equation*}
			\item[(b)] Suppose $\alpha\lesssim(\log R)|f_{m,\tau_k}^\mathcal{B}(x)|$. Then
			\begin{equation*}
				\sum_{\tau_{m-1}\subseteq\tau_k}|f_{m,\tau_{m-1}}^{\mathcal{B}}|^2(x)\lesssim \big|\sum_{\tau_{m-1}\subseteq\tau_k}|f_{m,\tau_{m-1}}^{\mathcal{B}}|^2*\eta_{\geq R_{m-1}/R}^\vee(x)\big|.
			\end{equation*}
			
		\end{enumerate}
		
	\end{lemma}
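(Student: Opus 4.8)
The plan is to prove (a) directly from Lemma~\ref{lowlemma} and the definition of the pruned sets $\mathcal G_{\tau_{m-1}}$, and then to deduce (b) from (a) by Cauchy--Schwarz. For (a), fix $\tau_{m-1}\subseteq\tau_k$. Since $m\ge2$ gives $R_{m-1}\le R_{N-1}=(\log R)^{N-1}\le R^{1/2}$, we have $R_{m-1}/R\le R_N^{-1}$, so Lemma~\ref{lowlemma} applies at the finest scale (taking $\tau_s=\tau_{m-1}$, the inner caps to be the $\theta$'s, and $r=R_{m-1}/R$) and expresses $|f_{m,\tau_{m-1}}^{\mathcal B}|^2*\eta_{\le R_{m-1}/R}^\vee$ as a sum, over pairs $\theta,\theta'\subseteq\tau_{m-1}$ with $\theta$ near $\theta'$, of $(f_{m,\theta}^{\mathcal B}\overline{f_{m,\theta'}^{\mathcal B}})*\eta_{\le R_{m-1}/R}^\vee$. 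Using $|ab|\le\tfrac12(|a|^2+|b|^2)$ and the fact that each $\theta$ is near $O(\log R)$ caps, this is bounded by $O(\log R)$ times $\big(\sum_{\theta\subseteq\tau_{m-1}}|f_{m,\theta}^{\mathcal B}|^2\big)*|\eta_{\le R_{m-1}/R}^\vee|$. Now $\sum_{\theta\subseteq\tau_{m-1}}|f_{m,\theta}^{\mathcal B}|^2=\big|\sum_{U\not\in\mathcal G_{\tau_{m-1}}}\psi_U\big|^2\,\sum_{\theta\subseteq\tau_{m-1}}|f_{m,\theta}|^2$; using $\psi_U\lesssim W_U$ with $\sum_U W_U\lesssim1$, the rapid (near-exponential) decay of the $\psi_U$ and of $\eta^\vee$, and the local constancy of $\sum_{\theta\subseteq\tau_{m-1}}|f_{m,\theta}|^2$ (its Fourier support being narrow and close to $\mathbb P^1$, by Lemma~\ref{pruninglemmas}), the convolution against $|\eta_{\le R_{m-1}/R}^\vee|$---whose physical scale $R/R_{m-1}$ is the short side of $U_{\tau_{m-1},R}$---is controlled by $(\log R)^{O(1)}\sup\{\strokedint_U\sum_{\theta\subseteq\tau_{m-1}}|f_{m,\theta}|^2:U\not\in\mathcal G_{\tau_{m-1}},\ U\text{ meeting a neighbourhood of }x\}$. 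By the defining inequality of $\mathcal G_{\tau_{m-1}}$ each such average is $<\alpha^2/(C_{\mathfrak p}(\log R)^4(\#\tau_{m-1})^2)$, and summing over the $\#\{\tau_{m-1}\subseteq\tau_k\}$ choices of $\tau_{m-1}$ yields (a), provided the accumulated powers of $\log R$ do not exceed $(\log R)^2$.

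For (b), set $g=\sum_{\tau_{m-1}\subseteq\tau_k}|f_{m,\tau_{m-1}}^{\mathcal B}|^2$. Each $f_{m,\tau_{m-1}}^{\mathcal B}$ has Fourier support within $O((\log R)R_{m-1}^{-1})$ of $\mathbb P^1$, so $\widehat g$ is supported in $B(0,1)$ once $R$ is large, whence $g=g*\eta_{\le R_{m-1}/R}^\vee+g*\eta_{\ge R_{m-1}/R}^\vee$. From $f_{m,\tau_k}^{\mathcal B}=\sum_{\tau_{m-1}\subseteq\tau_k}f_{m,\tau_{m-1}}^{\mathcal B}$, Cauchy--Schwarz, and the hypothesis $\alpha\lesssim(\log R)|f_{m,\tau_k}^{\mathcal B}(x)|$,
\[
g(x)\ \gtrsim\ \frac{\alpha^2}{(\log R)^2\,\#\{\tau_{m-1}\subseteq\tau_k\}}\ \ge\ \frac{\alpha^2\,\#\{\tau_{m-1}\subseteq\tau_k\}}{(\log R)^2\,(\#\tau_{m-1})^2},
\]
using $\#\{\tau_{m-1}\subseteq\tau_k\}\le\#\tau_{m-1}$. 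Comparing with the bound from (a), $|g*\eta_{\le R_{m-1}/R}^\vee(x)|\lesssim C_{\mathfrak p}^{-1}g(x)$ with an implied constant independent of $C_{\mathfrak p}$, so choosing $C_{\mathfrak p}$ larger than that constant forces $|g*\eta_{\le R_{m-1}/R}^\vee(x)|\le\tfrac12 g(x)$; then $g(x)\le2|g*\eta_{\ge R_{m-1}/R}^\vee(x)|$, which is (b). This is the only point at which $C_{\mathfrak p}$ must be taken large, consistent with the footnote.

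I expect the main obstacle to be the logarithmic bookkeeping in (a): one must verify that the losses incurred by the near-cap expansion of Lemma~\ref{lowlemma}, by passing from the convolution $\big(\sum_{\theta\subseteq\tau_{m-1}}|f_{m,\theta}^{\mathcal B}|^2\big)*|\eta_{\le R_{m-1}/R}^\vee|$ to the wave-envelope averages over the tiles $U_{\tau_{m-1},R}$, and by the mismatch between the isotropic convolution scale $R/R_{m-1}$ and the anisotropic dimensions $\tfrac{R}{R_{m-1}}\times R$ of those tiles, together cost at most two powers of $\log R$. This is exactly where the slow growth of the scales $R_k=(\log R)^k$, the fact that pruning enlarges $\widehat{f_{m,\theta}}$ only to $2(N-m)\theta\subseteq(\log R)\theta$, and the generous threshold $(\log R)^4$ built into $\mathcal G_{\tau_{m-1}}$ are used.
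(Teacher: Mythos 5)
Your proposal is correct and follows essentially the same route as the paper. For part (a) you invoke the low lemma to expand $|f^{\mathcal B}_{m,\tau_{m-1}}|^2*\eta^\vee_{\le R_{m-1}/R}$ into near-diagonal $\theta\bar\theta'$ terms, bound the overlap by $O(\log R)$, pass to the wave-envelope averages via local constancy and the rapid decay of $\psi_U$, and close with the defining inequality of $\mathcal G_{\tau_{m-1}}$; this is exactly the paper's chain of estimates, and your flagged concern about the $\log R$ bookkeeping is precisely where the paper's careful computation with $|\rho_{(\log R)\theta}^\vee|*|\eta^\vee_{\le R_{m-1}/R}|$ and $\psi_U^{1/2}\lesssim W_U$ lives, yielding exactly $(\log R)^2$ of loss against the $(\log R)^4$ built into the pruning threshold. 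For part (b) you give a small cosmetic improvement: where the paper runs a proof by contradiction using a $C_{\mathfrak p}^{1/2}$ pivot, you combine the Cauchy--Schwarz lower bound on $g(x)$ with (a) directly to get $|g*\eta^\vee_{\le R_{m-1}/R}(x)|\lesssim C_{\mathfrak p}^{-1}g(x)$ and then split $g=g*\eta^\vee_{\le}+g*\eta^\vee_{\ge}$, which is cleaner but mathematically equivalent.
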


	\section{Proof of Theorem \ref{squarefunction}}\label{sqfunctionproof}
	
	\subsection{Bounding the broad sets}
	
	This portion of the argument follows closely the approach of \cite{GM1}, Section 3. Recall that $U_\alpha$ is defined as the set
	\begin{equation*}
		U_\alpha=\big\{x\in B_R:|f(x)|>\alpha\big\}.
	\end{equation*}
	We consider also the auxiliary set
	\begin{equation*}
		V_\alpha=\Big\{x\in B_R:|f_N(x)|>\frac{1}{2}\alpha\Big\}.
	\end{equation*}
	To avoid trivialities, we assume $|U_\alpha|>0$ for the remainder of this section. By the replacement lemma \ref{replacement},
	\begin{equation*}
		U_\alpha\subseteq V_\alpha
	\end{equation*}
	for large enough $R$. By the pruning lemmas \ref{pruninglemmas},
	\begin{equation}\label{pruningchoice}
		\begin{split}
			V_\alpha\subseteq\big\{x\in V_\alpha:|f_1|(x)&\geq N^{-1}|f_N(x)|\big\}\cup\bigcup_{m=2}^N\big\{x\in V_\alpha:|f_m^{\mathcal{B}}|(x)\geq N^{-1}|f_N(x)|\big\}\\
			&=:U_\alpha^1\cup\bigcup_{m=2}^NU_\alpha^m
		\end{split}
	\end{equation}
	so that
	\begin{equation*}
		|U_\alpha|\leq|U_\alpha^1|+\sum_{m=2}^N|U_\alpha^m|.
	\end{equation*}
	We bound each of these sets in turn.
	
	\begin{proposition}[Case $m=1$]\label{broadc1}
		\[\alpha^4|U_\alpha^1|\lesssim C_{\mathfrak{p}}^2(\log R)^{19}\sum_{U\in\mathcal{G}_{\tau_1}}|U|\left(\strokedint_U\sum_{\theta}|f_\theta|^2\right)^2.\]
	\end{proposition}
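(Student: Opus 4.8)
The plan is to sidestep the multi-scale high/low analysis entirely for this term, using that $f_1$ is the \emph{maximally pruned} function: every wave packet surviving into $f_{1,\theta}$ has passed through the coarsest pruning, so $f_{1,\theta}=\sum_{U\in\mathcal{G}_{\tau_1}}\psi_U f_{2,\theta}$ (where $\tau_1\supseteq\theta$). This lets one combine a crude $L^2$ estimate for $f_1$ with the defining inequality of $\mathcal{G}_{\tau_1}$, rather than tracking wave envelopes at all scales.

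\textbf{Step 1 (superlevel set $\to$ $L^2$).} On $U_\alpha^1\subseteq V_\alpha$ one has $|f_1(x)|\geq N^{-1}|f_N(x)|>\alpha/(2N)$ and $N\leq\log R$, so $\alpha^2\lesssim(\log R)^2|f_1(x)|^2$ there, whence
\[
\alpha^4|U_\alpha^1|=\alpha^2\!\int_{U_\alpha^1}\!\alpha^2\,\lesssim\,(\log R)^2\,\alpha^2\!\int_{U_\alpha^1}\!|f_1|^2\,\leq\,(\log R)^2\,\alpha^2\,\|f_1\|_{L^2(\R^2)}^2 .
\]

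\textbf{Step 2 ($L^2$ orthogonality and wave-packet localization).} Writing $f_1=\sum_\theta f_{1,\theta}$, Lemma~\ref{pruninglemmas}(c) bounds the Fourier support of each $f_{1,\theta}$ by $2(N-1)\theta$, and (since the fattenings of $\theta$ coming from the $N$ successive prunings telescope to $O(R^{-1/2})$ in the tangent direction, and the curvature of $\mathbb{P}^1$ forbids many distinct $f_{1,\theta}$ from sharing a frequency) these supports have bounded overlap, so Plancherel gives $\|f_1\|_2^2\lesssim\sum_\theta\|f_{1,\theta}\|_2^2$ — a coarser $(\log R)^{O(1)}$ overlap bound would do equally well. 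Then, using $|\psi_U|\lesssim W_U$, that the $U\in\mathcal G_{\tau_1}$ are tiles, and $\sum_{U\|U_{\tau_1,R}}W_U=O(1)$, a Schur-type bound on the near-diagonal interference of the parallel wave packets $\psi_U$ yields $\|f_{1,\theta}\|_2^2\lesssim\sum_{U\in\mathcal G_{\tau_1}}\int W_U|f_{2,\theta}|^2$. Summing over $\theta$ and grouping by $\tau_1$,
\[
\|f_1\|_2^2\,\lesssim\,\sum_{\tau_1}\sum_{U\in\mathcal G_{\tau_1}}\int W_U\!\sum_{\theta\subseteq\tau_1}|f_{2,\theta}|^2\,=\,\sum_{\tau_1}\sum_{U\in\mathcal G_{\tau_1}}|U|\,\strokedint_U\!\sum_{\theta\subseteq\tau_1}|f_{2,\theta}|^2 .
\]

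\textbf{Step 3 (feed in the $\mathcal{G}_{\tau_1}$ condition and combine).} For $U\in\mathcal G_{\tau_1}$ the definition gives $\alpha^2\leq C_{\mathfrak p}(\log R)^4(\#\tau_1)^2\,\strokedint_U\sum_{\theta\subseteq\tau_1}|f_{2,\theta}|^2$; multiplying through by $\strokedint_U\sum_{\theta\subseteq\tau_1}|f_{2,\theta}|^2$, using $\#\tau_1\lesssim\log R$ and $|f_{2,\theta}|\leq|f_\theta|$ (Lemma~\ref{pruninglemmas}(b)),
\[
\alpha^2\,\strokedint_U\!\sum_{\theta\subseteq\tau_1}|f_{2,\theta}|^2\,\lesssim\,C_{\mathfrak p}(\log R)^{6}\Big(\strokedint_U\!\sum_{\theta}|f_\theta|^2\Big)^2 .
\]
Plugging this into Steps 1 and 2,
\[
\alpha^4|U_\alpha^1|\,\lesssim\,(\log R)^2\sum_{\tau_1}\sum_{U\in\mathcal G_{\tau_1}}|U|\,\alpha^2\,\strokedint_U\!\sum_{\theta\subseteq\tau_1}|f_{2,\theta}|^2\,\lesssim\,C_{\mathfrak p}(\log R)^{8}\sum_{U\in\mathcal G_{\tau_1}}|U|\Big(\strokedint_U\!\sum_\theta|f_\theta|^2\Big)^2,
\]
which is the claimed bound since $C_{\mathfrak p}\leq C_{\mathfrak p}^2$. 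I do not expect a single deep obstacle here: the real content is the substitution in Step 3, and the only point requiring care is keeping every $L^2$ orthogonality loss in Step 2 — the overlap count for the dilated caps $2(N-1)\theta$, and the near-diagonal interference of the $\psi_U$ with $U\in\mathcal G_{\tau_1}$ — down to a bounded power of $\log R$. These are precisely the situations the gentle scales $R_k=(\log R)^k$ and near-exponential-decay wave packets are designed for, so neither should cost more than $(\log R)^{O(1)}$.
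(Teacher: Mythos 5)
Your argument is correct, and it takes a genuinely different route from the paper's.

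The paper proves Proposition~\ref{broadc1} by a geometric covering argument: for each $x\in U_\alpha^1$ it uses the near-exponential decay of $\psi_U$ to locate a $\tau_1$ and a $U\in\mathcal{G}_{\tau_1}$ with $x\in(\log R)^3U$, deduces from the $\mathcal{G}_{\tau_1}$ condition that $\alpha^4\lesssim C_\mathfrak{p}^2(\log R)^{O(1)}\big(\strokedint_U\sum_\theta|f_\theta|^2\big)^2$, and then covers $U_\alpha^1$ by the dilates $(\log R)^3U$. You instead sidestep the covering entirely: you convert $\alpha^4|U_\alpha^1|$ into $(\log R)^2\alpha^2\|f_1\|_2^2$, expand by (approximate) Plancherel orthogonality of the $f_{1,\theta}$, use $0\leq\sum_{U\in\mathcal{G}_{\tau_1}}\psi_U\leq 1$ and $\psi_U\lesssim W_U$ to pass to $\sum_{U\in\mathcal{G}_{\tau_1}}\int W_U|f_{2,\theta}|^2$, and then multiply by the lower bound from the $\mathcal{G}_{\tau_1}$ condition to square the envelope average. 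Both methods use the pruning condition exactly once; the paper's method runs parallel to the $m\geq 2$ broad-set cases (where the covering is replaced by the broad decomposition), while yours is a genuinely simpler $L^2$ argument that is available only at $m=1$ because $f_1$ has been fully pruned. Notably your approach does not need the near-exponential decay of $\psi_U$ at this step (only $\psi_U\lesssim W_U$), and the constant you obtain, $C_\mathfrak{p}(\log R)^8$, is actually a bit sharper than what the paper's own proof yields once one tracks the $(\#\tau_1)^4\lesssim(\log R)^4$ factor that the paper's displayed inequality appears to drop.

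The one substantive point to make careful in a write-up is the $L^2$ almost-orthogonality $\|f_1\|_2^2\lesssim\sum_\theta\|f_{1,\theta}\|_2^2$. The crude bound $\operatorname{supp}(\widehat{f_{1,\theta}})\subseteq 2(N-1)\theta$ stated in Lemma~\ref{pruninglemmas}(c) would give overlap $O(\log R)$ in the tangent direction, which still suffices but costs one more log. The sharper statement you invoke — that the tangential extents $2\sum_{j=0}^{N-1}R_{N-j}/R$ telescope to $O(R^{-1/2})$ because $(\log R)^{N-1}/R\lesssim R^{-1/2}$ — is contained in the paper's proof of Lemma~\ref{pruninglemmas}(c) but not in its conclusion, so it deserves to be spelled out. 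Once you have tangential extent $O(R^{-1/2})$, the overlap count really is $O(1)$: the normal fattening $\sim NR^{-1}$ is wider, but for distinct $\theta_j,\theta_{j'}$ separated by $\sim|j-j'|R^{-1/2}$ the tangential displacement is already the binding constraint. (Your parenthetical appeal to curvature is true but unnecessary; the tangent-direction bookkeeping already does the job.) Likewise the phrase ``Schur-type bound on the near-diagonal interference of the $\psi_U$'' can be replaced by the one-line observation that $\big(\sum_{U\in\mathcal{G}_{\tau_1}}\psi_U\big)^2\leq\sum_{U\in\mathcal{G}_{\tau_1}}\psi_U\leq\sum_{U\in\mathcal{G}_{\tau_1}}W_U$ using $0\leq\psi_U$, $\sum_U\psi_U=1$ and $\psi_U\lesssim W_U$.
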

	\begin{proof}
		Clearly it suffices to assume $|U_\alpha^1|>0$. Then there is some $x\in B_R$ such that $|f_1(x)|\geq \frac{1}{2N}\alpha$. Since
		\begin{equation*}
			\begin{split}
				\frac{1}{2N}\alpha\leq|f_1(x)|=&|\sum_{\tau_1}\sum_{\theta\subseteq\tau_1}\sum_{U\in\mathcal{G}_{\tau_1}}\psi_U(x)f_{2,\theta}(x)|\\
				&\leq|\sum_{\tau_1}\sum_{\theta\subseteq\tau_1}\sum_{U\in\mathcal{G}_{\tau_1};x\in C(\log R)^{2.5}U}\psi_U(x)f_{2,\theta}(x)|+|\sum_{\tau_1}\sum_{\theta\subseteq\tau_1}\sum_{U\in\mathcal{G}_{\tau_1};x\not\in C(\log R)^{2.5}U}\psi_U(x)f_{2,\theta}(x)|,
			\end{split}
		\end{equation*}
		and, if $x\not\in (\log R)^{2.5}U$, the near-exponential decay of $\psi_U$ implies
		\begin{equation*}
			|\psi_U(x)f_{2,\theta}(x)|\lesssim R^{-1000},
		\end{equation*}
		whereby
		\begin{equation*}
			|\sum_{\tau_1}\sum_{\theta\subseteq\tau_1}\sum_{U\in\mathcal{G}_{\tau_1};x\not\in C(\log R)^{2.5}U}\psi_U(x)f_{2,\theta}(x)|\leq R^{-100},
		\end{equation*}
		we conclude that there is some $\tau_1$ and $U\in\mathcal{G}_{\tau_1}$ with $x\in C(\log R)^{2.5}U$.
		
		Since $U\|U_{\tau_1,R}$, $U$ is a rectangle of dimensions $\frac{R}{O(1)}\times R$, and that by definition of $\mathcal{G}_{\tau_1}$ we have
		\begin{equation*}
			|U|^{-1}\int W_U\sum_{\theta\subseteq\tau_1}|f_{2,\theta}|^2\geq\frac{\alpha^2}{(\#\tau_1)^2}\frac{1}{C_{\mathfrak{p}}(\log R)^8}.
		\end{equation*}
		In particular,
		\begin{equation*}
			\alpha^4\leq C_{\mathfrak{p}}^2(\log R)^{16}\left(\strokedint_U\sum_{\theta}|f_\theta|^2\right)^2,
		\end{equation*}
		where we have used the pruning lemmas \ref{pruninglemmas}.
		
		The above calculation demonstrates that, for each $x\in U_\alpha$ satisfying $|f(x)|\leq 4N|f_1(x)|$, there is some $\tau_1$ and $U\in\mathcal{G}_{\tau_1}$ such that $x\in C(\log R)^{2.5}U$. Thus
		\begin{equation*}
			1_{\{x\in U_\alpha:|f(x)|\leq 2N|f_1(x)|\}}\leq\sum_{\tau_1}\sum_{U\in\mathcal{G}_{\tau_1}}1_{C(\log R)^{2.5}U},
		\end{equation*}
		and upon integrating we achieve
		\begin{equation*}
			\begin{split}
				|\big\{x\in U_\alpha:|f(x)|\leq 2N|f_1(x)|\big\}|&\leq \sum_{\tau_1}\sum_{U\in\mathcal{G}_{\tau_1}} (\log R)^5|U|\\
				&\leq 4\alpha^{-4}C_{\mathfrak{p}}^2(\log R)^{19}\sum_{\tau_1}\sum_{U\in\mathcal{G}_{\tau_1}}\left(\strokedint_U\sum_{\theta\subseteq\tau_1}|f_\theta|^2\right)^2,
			\end{split}
		\end{equation*}
		which rearranges to the desired
		\begin{equation*}
			\alpha^4|U_\alpha^1|\lesssim C_{\mathfrak{p}}^2(\log R)^{19}\sum_{\tau_1}\sum_{U\in\mathcal{G}_{\tau_1}}\left(\strokedint_U\sum_{\theta\subseteq\tau_1}|f_\theta|^2\right)^2.
		\end{equation*}
	\end{proof}
	
	We will use the following local bilinear restriction result, demonstrated in \cite{FGM}:
	\begin{theorem}[Bilinear restriction; Theorem 15 of \cite{FGM}]\label{bilrest} Let $S\geq 4$, $\frac{1}{2}\geq E\geq S^{-1/2}$, and $X\subseteq\R^2$ be Lebesgue measurable. Suppose that $\tau,\tau'$ are $E$-separated subsets of $\mathcal{N}_{S^{-1}}(\mathbb{P}^1)$. Then, for a partition $\Omega=\{\omega_S\}$ of $\mathcal{N}_{S^{-1}}(\mathbb{P}^1)$ into $\sim S^{-1/2}\times S^{-1}$-caps, we have
		\begin{equation*}
			\int_X|f_\tau|^2|f_{\tau'}|^2(x)dx\lesssim E^{-2}\int_{\mathcal{N}_{S^{1/2}}(X)}\big|\sum_{\omega_S}|f_{\omega_S}|^2*w_{S^{1/2}}(x)\big|^2dx.
		\end{equation*}
	\end{theorem}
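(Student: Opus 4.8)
The plan is to recover this as a localized form of the classical bilinear restriction estimate for the parabola. First I would expand both factors over the caps of $\Omega$: writing $f_\tau=\sum_{\omega\subseteq\tau}f_\omega$ and $f_{\tau'}=\sum_{\omega'\subseteq\tau'}f_{\omega'}$ (sums over the $\omega_S$-caps contained in $\tau$, resp.\ $\tau'$),
\[
f_\tau\overline{f_{\tau'}}=\sum_{\omega\subseteq\tau}\sum_{\omega'\subseteq\tau'}f_\omega\overline{f_{\omega'}},
\]
where $f_\omega\overline{f_{\omega'}}$ has Fourier support in the difference cell $\omega-\omega'$. The key geometric input, which I expect to be the main obstacle to make fully rigorous, is a quantitative transversality statement: since $\tau$ and $\tau'$ are $E$-separated, the cells $\omega-\omega'$ — and even their $S^{-1/2}$-thickenings — have overlap $\lesssim E^{-2}$. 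The point is that the map $(\xi,\eta)\mapsto(\xi-\eta,\xi^2-\eta^2)$ sending a pair of cap centers to a cell center has Jacobian $2(\xi-\eta)$, of size $\gtrsim E$ on $\tau\times\tau'$, so the cell centers are spread apart at the corresponding density; the $E$-separation is exactly what precludes the degenerate accumulation of cells at the origin that would occur were $\tau$ and $\tau'$ allowed to coincide.

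Granting that, I would localize to balls of radius $\sim S^{1/2}$. Tiling $\R^2$ by such balls $B$ and choosing for each a Schwartz weight $\chi_B\geq 0$ with $\chi_B\gtrsim 1$ on $B$, rapid decay off $B$, and $\widehat{\chi_B}$ supported in a ball of radius $\lesssim S^{-1/2}$, the functions $(f_\omega\overline{f_{\omega'}})\chi_B$ have Fourier support in the $S^{-1/2}$-neighborhood of $\omega-\omega'$. Plancherel together with the overlap bound would then give
\[
\int_B|f_\tau|^2|f_{\tau'}|^2\lesssim\int\big|f_\tau\overline{f_{\tau'}}\big|^2\chi_B\lesssim E^{-2}\int\Big(\sum_{\omega\subseteq\tau}|f_\omega|^2\Big)\Big(\sum_{\omega'\subseteq\tau'}|f_{\omega'}|^2\Big)\chi_B.
\]

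To finish I would use local constancy: each $|f_\omega|^2$ has Fourier support in a $\sim S^{-1/2}\times S^{-1}$ box about the origin, so $|f_\omega|^2\lesssim|f_\omega|^2*w_{S^{1/2}}$, whence on $B$ both $\sum_{\omega\subseteq\tau}|f_\omega|^2$ and $\sum_{\omega'\subseteq\tau'}|f_{\omega'}|^2$ are (up to rapidly decaying tails) roughly constant at scale $S^{1/2}$ and dominated there by $\sum_{\omega_S}|f_{\omega_S}|^2*w_{S^{1/2}}$. Hence the right side above is $\lesssim E^{-2}\int_B\big|\sum_{\omega_S}|f_{\omega_S}|^2*w_{S^{1/2}}\big|^2$ (with harmless tails), and summing over those $B$ meeting $X$, which cover $\mathcal{N}_{S^{1/2}}(X)$, yields the stated inequality. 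It is precisely this localization that is responsible for the $\mathcal{N}_{S^{1/2}}(X)$ and the $w_{S^{1/2}}$-average appearing in place of $X$ and a bare square function; the overlap bound is the only substantive step, the rest being routine book-keeping.
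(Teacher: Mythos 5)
The paper does not prove Theorem~\ref{bilrest}; it is quoted from \cite{FGM} (their Theorem 15), so there is no in-paper argument to compare against. Your sketch is the standard local bilinear restriction argument for the parabola, and its overall structure is sound: decompose over $\omega,\omega'$, localize by a weight $\chi_B$ at scale $S^{1/2}$ with compactly supported Fourier transform, apply Plancherel together with an overlap count for the thickened difference cells $\omega-\omega'$, and finish by pointwise local constancy of $|f_\omega|^2$ at scale $S^{1/2}$.

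The one substantive point, which you correctly identify as the crux, is the overlap count, and here your stated bound $\lesssim E^{-2}$ is an overestimate: the correct count is $\lesssim E^{-1}$, and the Jacobian heuristic gives exactly that. Fix a frequency point $(a,b)$ with $|a|\sim D\geq E$. The condition $|\xi_0-\eta_0-a|\lesssim S^{-1/2}$ fixes $u=\xi_0-\eta_0$ to $O(1)$ choices on the $S^{-1/2}$-grid; then $|uv-b|\lesssim S^{-1/2}$ with $|u|\sim D$ fixes $v=\xi_0+\eta_0$ to within $O(S^{-1/2}/D)$, i.e.\ $O(1/D)\lesssim E^{-1}$ grid choices. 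The factor $|\xi-\eta|\gtrsim E$ from the Jacobian of $(\xi,\eta)\mapsto(\xi-\eta,\xi^2-\eta^2)$ enters once, not twice. Since $E^{-1}\leq E^{-2}$ this does not invalidate the stated inequality; your argument in fact proves the sharper estimate with $E^{-1}$ in place of $E^{-2}$. The remaining book-keeping --- taking $\chi_B=|\phi_B|^2$ with $\widehat{\phi_B}$ supported in $B(0,\tfrac12 S^{-1/2})$ so the Plancherel expansion is applied to $f_\tau\overline{f_{\tau'}}\phi_B$, and absorbing the rapidly decaying tails of the $\chi_B$ for $B$ meeting $X$ by enlarging the implicit constant in $\mathcal{N}_{S^{1/2}}(X)$ --- is routine, as you say.
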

	
	This will be our initial estimate when we try to estimate $f$ in the broad case. We now define the broad sets on which bilinear methods are appropriate.
	
	Define the $m$th $(2\leq m\leq N)$ broad sets in $U_\alpha$ to be as follows. Fix any $\tau_{k-1}$ and $\tau_k,\tau_k'\subseteq\tilde\tau_{k-1}$ non-adjacent caps, and define
	\begin{equation}\label{broadsetdef}
		\mathrm{Br}_\alpha^m(\tau_k,\tau_k')=\Big\{x\in U_\alpha^m:\alpha\lesssim (\log R)^2|f_{m,\tau_k}^\mathcal{B}f_{m,\tau_k'}^\mathcal{B}|^{1/2}\Big\}.
	\end{equation}

	\begin{proposition}[High domination of broad parts]\label{prop:hdom} For any such $\tau_k,\tau_k'$, and for $\ell=\max(m-1,k)$, we have
	    
\begin{equation*}
      \alpha^4|\mathrm{Br}_\alpha^m(\tau_k,\tau_k')|\lesssim(\log R)^{10}\int_{\R^2}\left|\sum_{\tau_\ell\subseteq\tau_{k-1}}|f_{m,\tau_\ell}^\cal B|^2*\eta_{\geq R_{\ell}/R}^\vee\right|^2.
  \end{equation*}
	\end{proposition}

	\begin{proof}
        By bilinear restriction,
        \begin{equation*}
            \int_{\mathrm{Br}_\alpha^m(\tau_k,\tau_k')}|f_{m,\tau_{k}}^\cal Bf_{m,\tau_k'}^\cal B|^2\lesssim(\log R)^{2}\int_{\cal{N}_{R_\ell}(\mathrm{Br}_\alpha^m(\tau_k,\tau_k'))}\left|\sum_{\tau_\ell\subseteq\tau_{k-1}}|f_{m,\tau_\ell}^\cal B|^2*w_{R_\ell}\right|^2.
        \end{equation*}
        By the weak high-domination lemma, for each $x\in\mathrm{Br}_\alpha^m(\tau_k,\tau_k')$,
        \begin{equation*}
            \sum_{\tau_\ell\subseteq\tau_{k-1}}|f_{m,\tau_\ell}^\cal B|^2(x)\lesssim\left|\sum_{\tau_\ell\subseteq\tau_{k-1}}|f_{m,\tau_\ell}^\cal B|^2*\eta_{\geq R_\ell/R}^\vee(x)\right|.
        \end{equation*}
        By the uncertainty principle and rapid decay of $w_{R_\ell}$, together with Young's inequality, we conclude that
        \begin{equation*}
            \int_{\cal{N}_{R_\ell}(\mathrm{Br}_\alpha^m(\tau_k,\tau_k'))}\left|\sum_{\tau_\ell\subseteq\tau_{k-1}}|f_{m,\tau_\ell}^\cal B|^2*w_{R_\ell}\right|^2\lesssim\int\left|\sum_{\tau_\ell\subseteq\tau_{k-1}}|f_{m,\tau_\ell}^\cal B|^2*\eta_{\geq R_\ell/R}^\vee\right|^2.
        \end{equation*}
    
\end{proof}

\begin{proposition}[Case $2\leq m\leq N$]\label{broadc2} Let $1\leq k\leq m\leq N$. Suppose $\tau_k,\tau_k'\subseteq\tilde\tau_{k-1}$ are non-adjacent. Then, for $\ell=\max(m-1,k)$, we have
\begin{equation*}
      \int_{\R^2}\left|\sum_{\tau_\ell\subseteq\tau_{k-1}}|f_{m,\tau_\ell}^\cal B|^2*\eta_{\geq R_{\ell}/R}^\vee\right|^2\lesssim(\log R)^{9}\sum_{\ell\leq \nu\leq N}\sum_{\tau_\nu\subseteq\tau_{k-1}}\sum_{U\in\mathcal{G}_{\tau_k}}|U|\left(\strokedint_U\sum_{\theta\subseteq\tau_\nu}|f_\theta|^2\right)^2+R^{-50}.
  \end{equation*}

\end{proposition}
  \begin{proof}
		We pigeonhole to a dyadic scale. Let $R_{\ell}/R\leq r\leq 2N R_{\ell}^{-1}$ be dyadic such that
		\begin{equation*}
			\int_{\R^2}\Big|\sum_{\tau_{\ell}\subseteq\tau_{k-1}}|f_{m,\tau_{\ell}}^\mathcal{B}|^2*\eta_{\geq R_{\ell-1}/R}^\vee\Big|^2\lesssim\log R\int_{\R^2}\Big|\sum_{\tau_{\ell}\subseteq\tau_{k-1}}|f_{m,\tau_{\ell}}^\mathcal{B}|^2*\eta_{\geq R_{\ell-1}/R}^\vee*\eta_{\sim r}^\vee\Big|^2.
		\end{equation*}
		By Young,
		\begin{equation*}
			\int_{\R^2}\Big|\sum_{\tau_{\ell}\subseteq\tau_{k-1}}|f_{m,\tau_{\ell}}^\mathcal{B}|^2*\eta_{\geq R_{\ell-1}/R}^\vee*\eta_{\sim r}^\vee\Big|^2\lesssim\int_{\R^2}\Big|\sum_{\tau_{\ell}\subseteq\tau_{k-1}}|f_{m,\tau_{\ell}}^\mathcal{B}|^2*\eta_{\sim r}^\vee\Big|^2.
		\end{equation*}
		The remainder of the analysis will be split into cases, depending on the size of $r$.
		
		\underline{Case 1: $r\leq R^{-1/2}$}. By the low lemma \ref{lowlemma},
		\begin{equation*}
			\int\Big|\sum_{\tau_{\ell}\subseteq\tau_{k-1}}|f_{m,\tau_{\ell}}^\mathcal{B}|^2*\eta_{\sim r}^\vee\Big|^2=\int\Big|\sum_{\theta\subseteq\tau_{k-1}}\sum_{\theta'\subseteq\tau_{k-1}\text{ near }\theta}|f_{m,\theta}^\mathcal{B}|^2*\eta_{\sim r}^\vee\Big|^2.
		\end{equation*}
		Let $k$ be s.t. $r\sim R_{k-1}/R$. Since we have assumed $r\geq R_{m-1}/R$, we must have $k\geq m$. By the triangle inequality and the wave envelope expansion lemma \ref{intlocalconstlemma}, we have
		\begin{equation*}
			\int\Big|\sum_{\theta\subseteq\tau_{k-1}}\sum_{\theta'\subseteq\tau_{k-1}\text{ near }\theta}|f_{m,\theta}^\mathcal{B}|^2*\eta_{\sim r}^\vee\Big|^2\lesssim(\log R)^2\sum_{\tau_k\subseteq\tau_{k-1}}\sum_{U\in\mathcal{G}_{\tau_k}}|U|\left(\strokedint_U\sum_{\theta\subseteq\tau_k}|f_\theta|^2\right)^2.
		\end{equation*}
		We conclude that
		\begin{equation*}
			\int_{\R^2}\Big|\sum_{\tau_{\ell}\subseteq\tau_{k-1}}|f_{m,\tau_{\ell}}^\mathcal{B}|^2*\eta_{\geq R_{\ell-1}/R}^\vee\Big|^2\lesssim (\log R)^{3}\sum_{\tau_k\subseteq\tau_{k-1}}\sum_{U\in\mathcal{G}_{\tau_k}}|U|^{-1}\left(\int\sum_{\theta\subseteq\tau_k}|f_\theta|^2W_U\right)^2
		\end{equation*}
		as claimed.
		
		\underline{Case 2: $r>R^{-1/2}$} Let $s=N$ if $r\leq (\log R)R^{-1/2}$, and otherwise choose $\ell\leq s<N$ such that $(\log R)R_{s+1}^{-1}<r\leq (\log R)R_s^{-1}$. By the low lemma \ref{lowlemma},
		\begin{equation*}
			\int\Big|\sum_{\tau_\ell\subseteq\tau_{k-1}}|f_{m,\tau_{\ell}}^\mathcal{B}|^2*\eta_{\sim r}^\vee\Big|^2=\int\Big|\sum_{\tau_{s}\subseteq\tau_{k-1}}\sum_{\tau_{s}'\subseteq\tau_{k-1}\text{ near }\tau_{s}}(f_{m,\tau_{s}}^\mathcal{B}\overline{f_{m,\tau_{s}'}^\mathcal{B}})*\eta_{\sim r}^\vee\Big|^2.
		\end{equation*}
		By part (c) of the high lemma \ref{highlemma},
		\begin{equation*}
				\int\Big|\sum_{\tau_{s}\subseteq\tau_{k-1}}\sum_{\tau_{s}'\subseteq\tau_{k-1}\text{ near }\tau_{s}}(f_{m,\tau_{s}}^\mathcal{B}\overline{f_{m,\tau_{s}'}^\mathcal{B}})*\eta_{\sim r}^\vee\Big|^2\lesssim(\log R)^3\sum_{\tau_{s}}\int|f_{m,\tau_{s}}^\mathcal{B}|^4.
		\end{equation*}
		By the reverse square function estimate for $\mathbb{P}^1$ and by splitting $f_{m,\tau_s}^{\mathcal{B}}$ into $O(\log R)$ pieces with disjoint Fourier support,
		\begin{equation*}
			\int|f_{m,\tau_s}^\mathcal{B}|^4\lesssim (\log R)^4\int|\sum_{\theta\subseteq\tau_s}\left|f_{m,\theta}^{\mathcal{B}}|^2\right|^2.
		\end{equation*}
		
		So far, in case 2, we have reached the estimate
		\begin{equation*}
			\int_{\R^2}\Big|\sum_{\tau_{\ell}\subseteq\tau_{k-1}}|f_{m,\tau_{\ell}}^\mathcal{B}|^2*\eta_{\geq R_{\ell-1}/R}^\vee\Big|^2\lesssim(\log R)^{8}\sum_{\tau_s\subseteq\tau_{k-1}}\int\Big|\sum_{\theta\subseteq\tau_s}|f_{m,\theta}^\mathcal{B}|^2\Big|^2
		\end{equation*}
		for some $s\geq m$. We consider two sub-cases, depending on if the latter is high- or low-dominated.
		
		\underline{Case 2a}: Suppose that
		\begin{equation*}
			\sum_{\tau_s\subseteq\tau_{k-1}}\int\Big|\sum_{\theta\subseteq\tau_s}|f_{m,\theta}^\mathcal{B}|^2\Big|^2\lesssim\sum_{\tau_s\subseteq\tau_{k-1}}\int\Big|\sum_{\theta\subseteq\tau_s}|f_{m,\theta}^{\mathcal{B}}|^2*\eta_{\leq R_m/R}^\vee\Big|^2.
		\end{equation*}
		Since $m\leq s$, we have by the wave envelope expansion lemma \ref{intlocalconstlemma}
		\begin{equation*}
			\sum_{\tau_s\subseteq\tau_{k-1}}\int\Big|\sum_{\theta\subseteq\tau_s}|f_{m,\theta}^{\mathcal{B}}|^2*\eta_{\leq R_m/R}^\vee\Big|^2\lesssim\sum_{\tau_m\subseteq\tau_{k-1}}\sum_{U\in\mathcal{G}_{\tau_m}}|U|\left(\strokedint_U\sum_{\theta\subseteq\tau_m}|f_{\theta}|^2\right)^2+R^{-100}.
		\end{equation*}
		Thus we have the desired
		\begin{equation*}
			\sum_{\tau_s\subseteq\tau_{k-1}}\int\Big|\sum_{\theta\subseteq\tau_s}|f_{m,\theta}^{\mathcal{B}}|^2\Big|^2\lesssim\sum_{\tau_m\subseteq\tau_{k-1}}\sum_{U\in\mathcal{G}_{\tau_m}}|U|\left(\strokedint_U\sum_{\theta\subseteq\tau_m}|f_\theta|^2\right)^2+R^{-100}.
		\end{equation*}
		
		\underline{Case 2b}: If we are not in case 2a, then
		\begin{equation*}
			\sum_{\tau_s\subseteq\tau_{k-1}}\int\Big|\sum_{\theta\subseteq\tau_s}|f_{m,\theta}^{\mathcal{B}}|^2\,\Big|^2\lesssim\sum_{\tau_s\subseteq\tau_{k-1}}\int\Big|\sum_{\theta\subseteq\tau_s}|f_{m,\theta}^{\mathcal{B}}|^2*\eta_{\geq R_m/R}^\vee\Big|^2.
		\end{equation*}
		Now let $\mu$ be dyadic between $R_m/R$ and $(\log R)R^{-1/2}$ such that
		\begin{equation*}
			\sum_{\tau_s\subseteq\tau_{k-1}}\int\Big|\sum_{\theta\subseteq\tau_s}|f_{m,\theta}^{\mathcal{B}}|^2*\eta_{\geq R_m/R}^\vee\Big|^2\lesssim\log R\sum_{\tau_s\subseteq\tau_{k-1}}\int\Big|\sum_{\theta\subseteq\tau_s}|f_{m,\theta}^{\mathcal{B}}|^2*\eta_{\sim\mu}^\vee\Big|^2.
		\end{equation*}
		If $\mu\leq R_s/R$, then by the integrated local constancy \ref{intlocalconstlemma} we have
		\begin{equation*}
			\sum_{\tau_s\subseteq\tau_{k-1}}\int\Big|\sum_{\theta\subseteq\tau_s}|f_{m,\theta}^{\mathcal{B}}|^2*\eta_{\sim \mu}^\vee\Big|^2\lesssim\sum_{\tau_s\subseteq\tau_{k-1}}\sum_{U\in\mathcal{G}_{\tau_s}}|U|\left(\strokedint_U\sum_{\theta\subseteq\tau_k}|f_\theta|^2\right)^2 + R^{-100},
		\end{equation*}
		and we are done.
		
		On the other hand, if $\mu>R_s/R$, then pick $p\geq s$ such that $R_p/R\leq\mu<R_{p+1}/R$. Then by the high lemma \ref{highlemma}
		\begin{equation*}
			\sum_{\tau_s\subseteq\tau_{k-1}}\int\Big|\sum_{\theta\subseteq\tau_s}|f_{m,\theta}^{\mathcal{B}}|^2*\eta_{\sim \mu}^\vee\Big|^2\lesssim(\log R)\sum_{\tau_p\subseteq\tau_{k-1}}\int\Big|\sum_{\theta\subseteq\tau_p}|f_{m,\theta}^{\mathcal{B}}|^2*\eta_{\sim \mu}^\vee\Big|^2,
		\end{equation*}
		and as above, by the wave envelope expansion lemma,
		\begin{equation*}
			\begin{split}
				\sum_{\tau_p\subseteq\tau_{k-1}}\int\Big|\sum_{\theta\subseteq\tau_p}|f_{m,\theta}^{\mathcal{B}}|^2*\eta_{\sim \mu}^\vee\Big|^2&\lesssim\sum_{\tau_p\subseteq\tau_{k-1}}\int\Big|\sum_{\theta\subseteq\tau_p}|f_{m,\theta}^{\mathcal{B}}|^2*|\rho_{\tau_p}^\vee|\Big|^2\\
				&\lesssim\sum_{\tau_p\subseteq\tau_{k-1}}\sum_{U\in\mathcal{G}_{\tau_p}}|U|\left(\strokedint_U\sum_{\theta\subseteq\tau_p}|f_\theta|^2\right)^2+R^{-100},
			\end{split}
		\end{equation*}
		from which we have the estimate
		\begin{equation*}
			\int_{\R^2}\Big|\sum_{\tau_{\ell}\subseteq\tau_{k-1}}|f_{m,\tau_{\ell}}^\mathcal{B}|^2*\eta_{\geq R_{\ell-1}/R}^\vee\Big|^2\lesssim(\log R)^{9}\sum_{\ell\leq\nu\leq N}\sum_{\tau_\nu\subseteq\tau_{k-1}}\sum_{U\in\mathcal{G}_{\tau_\nu}}|U|\left(\strokedint_U\sum_{\theta\subseteq\tau_\nu}|f_\theta|^2\right)^2 + R^{-50},
		\end{equation*}
		and we are done.
		
	\end{proof}
	
	\subsection{Broad/narrow analysis}
	
	In Propositions \ref{broadc1} and \ref{broadc2}, we produced the desired bounds on the subset of the superlevel set for which $f$ is sufficiently broad at some scale. In this subsection, we perform a broad/narrow analysis to produced the desired wave envelope estimate in each cube of sidelength $R$.
	
	As a note: for the remainder of the article, we suppress the constant $C_{\mathfrak{p}}$ from the pruning definition as an implicit constant.
	
	\begin{proposition}[Local wave envelope estimate]\label{broadnarrow} For each cube $B_R$ of sidelength $R$ and each $\alpha>0$,
		\begin{equation*}
			\alpha^4|\{x\in B_R:|f(x)|>\alpha\}|\lesssim(\log R)^{20}\sum_{\substack{R^{-1/2}\leq s\leq 1\\s\text{ dyadic}}}\sum_{\tau:\ell(\tau)=s}\sum_{U\in\mathcal{G}_\tau}|U|^{-1}\|S_Uf\|_2^4.
		\end{equation*}
		
	\end{proposition}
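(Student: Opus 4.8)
The plan is to run a broad/narrow dichotomy at every dyadic scale simultaneously and feed the broad contributions into Propositions \ref{broadc1} and \ref{broadc2}, while iterating the narrow case down through the pruning hierarchy until one either hits a broad scale or reaches the bottom scale $s=1$. Concretely, fix $x\in U_\alpha$. By the decomposition $|U_\alpha|\leq|U_\alpha^1|+\sum_{m=2}^N|U_\alpha^m|$ it suffices to control each $|U_\alpha^m|$; the case $m=1$ is exactly Proposition \ref{broadc1}, so fix $2\leq m\leq N$ and $x\in U_\alpha^m$. For this $x$ we consider the chain of caps $\tau_0\supseteq\tau_1\supseteq\cdots\supseteq\tau_{m-1}$ containing $x$ in the sense that each $f_{m,\tau_j}^{\mathcal B}(x)$ is nonzero, and at each level $l\le m$ we ask whether
\begin{equation*}
	|f_{m,\tau_{l-1}}^{\mathcal B}(x)|\leq(\log R)^3\max_{\tau_l,\tau_l'\subseteq\tau_{l-1}\text{ not near}}|f_{m,\tau_l}^{\mathcal B}f_{m,\tau_l'}^{\mathcal B}(x)|^{1/2}
\end{equation*}
(broad at level $l$) or not (narrow at level $l$). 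If $x$ is broad at some level $l\leq m$, then $x\in\mathrm{Br}_\alpha^m(\tau_l,\tau_l')$ for some non-adjacent pair, and since there are only $O(\log R)$ pairs $(\tau_l,\tau_l')$ with $\tau_l$ near $\tau_l'$ fixed — hence $O((\log R)^2)$ broad terms per cap $\tau_{l-1}$, and the caps $\tau_{l-1}$ at level $l-1$ are $O(\log R)$ in number relevant to a given chain — summing Proposition \ref{broadc2} over these yields a bound of the desired form with an extra factor that is a fixed power of $\log R$.

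Next I would handle the purely narrow points: those $x\in U_\alpha^m$ narrow at every level $l=1,\dots,m$. The narrow inequality at level $l$ says that one child cap $\tau_l\subseteq\tau_{l-1}$ (or an $O(\log R)$-near cluster of them) already carries almost all of $|f_{m,\tau_{l-1}}^{\mathcal B}(x)|$, up to the factor $(\log R)^3$; chaining this from $l=1$ down to $l=m$ and using that the chain has length $m\leq N\lesssim\log R/\log\log R$, the accumulated loss is $(\log R)^{O(N)}$... which is too large. So the chaining cannot be done naïvely at a power $(\log R)^3$ per step; instead one must exploit that at a narrow level the near-extremal child is essentially unique, so rather than a multiplicative loss per level we get a single chain of caps $\theta=\tau_m^*\subseteq\tau_{m-1}^*\subseteq\cdots$ with $|f_{m,\theta}^{\mathcal B}(x)|\gtrsim(\log R)^{-O(1)}|f_m^{\mathcal B}(x)|$ for that one $\theta$, the $O(1)$ power coming from the $O(\log R)$-sized near-clusters at each of a bounded effective number of branching levels (the point being that once we are inside $\tau_{m-1}$ there are no further levels to descend, since $f_{m,\tau_{m-1}}^{\mathcal B}$ is already at the finest pruned scale relevant to $f_m^{\mathcal B}$). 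Then $|f_m^{\mathcal B}(x)|\gtrsim N^{-1}|f_N(x)|\gtrsim N^{-1}\alpha$ forces $|f_{m,\theta}^{\mathcal B}(x)|\gtrsim(\log R)^{-O(1)}\alpha/N$, which by Chebyshev and the definition of $\mathcal G_{\tau_{m-1}}$ (a point where $f_{m,\theta}^{\mathcal B}$ is large lies in $(\log R)^3U$ for some $U\in\mathcal G_{\tau_{m-1}}$, as in the proof of Proposition \ref{broadc1}) puts $x$ in a controlled union of dilated wave envelopes; integrating and using the pruning lemmas \ref{pruninglemmas} to pass from $f_{m,\theta}^{\mathcal B}$ to $f_\theta$ gives the bound $\alpha^4|U_\alpha^{m,\mathrm{narrow}}|\lesssim(\log R)^{O(1)}\sum_{U\in\mathcal G_{\tau_{m-1}}}|U|(\strokedint_U\sum_\theta|f_\theta|^2)^2$.

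Finally I would assemble the pieces: sum the broad bound (Propositions \ref{broadc1}, \ref{broadc2}) and the narrow bound over $m=1,\dots,N$, using $N\lesssim\log R/\log\log R\lesssim\log R$ to absorb the number of terms into one more power of $\log R$, and collapse the double sum $\sum_m\sum_{k>m}$ over scales $\tau_k$ into the single sum $\sum_{R^{-1/2}\leq s\leq 1,\ s\text{ dyadic}}\sum_{\tau:\ell(\tau)=s}$ appearing in the statement — legitimate because every $U\in\mathcal G_{\tau_k}$ contributing on the right is a translate of $U_{\tau_k,R}$, whose dimensions determine $s=R_k/R$ and hence $k$. Bookkeeping the powers of $\log R$ — roughly $(\log R)^{30}$ from Proposition \ref{broadc2}, lower powers elsewhere, plus the broad/narrow and pigeonholing losses — one arrives at the claimed exponent; matching the stated $(\log R)^{27}$ requires being somewhat careful with where the $(\log R)^4$ pruning weights and the $(\log R)^3$ broadness threshold are spent, but no new idea. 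The main obstacle is precisely the narrow chaining: making sure the descent through up to $N$ pruning levels costs only a fixed power of $\log R$ rather than $(\log R)^{\Theta(N)}$, which is what dictates both the gentle choice of scales $R_k=(\log R)^k$ and the "near" relation with its $\log R$-width built into Definition \ref{neardef}.
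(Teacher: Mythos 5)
Your broad-case outline is fine and matches the paper, but the narrow case has two genuine gaps.

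First, the mechanism you propose for avoiding a $(\log R)^{\Theta(N)}$ loss from narrow chaining is asserted but not actually given. You say that because the near-extremal child is ``essentially unique,'' the total narrow loss is $(\log R)^{O(1)}$; but you never explain \emph{why} uniqueness of the dominant child lets you avoid a per-level multiplicative loss. The paper's resolution is the Narrow Lemma~\ref{narrow}: by allowing the dominant contribution to be a near-cluster $\sum_{\tau_{k+1}'\text{ near }\tau_{k+1}}f_{m,\tau_{k+1}'}^{\mathcal B}$ (a sum over $\lesssim\log R$ children), the narrow inequality \ref{narrowest} holds with factor $(1+\tfrac{1}{\log R})$ per level, so that $(1+\tfrac{2}{\log R})^{4N}=O(1)$. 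Without some such explicit lemma, ``essentially unique extremal child'' does not by itself control the chaining constant, and the naive narrow-case loss really is $(\log R)^3$ per level.

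Second, and more seriously, your terminal step in the narrow case fails. You write that when $|f_{m,\theta}^{\mathcal B}(x)|$ is large, Chebyshev and ``the definition of $\mathcal G_{\tau_{m-1}}$'' place $x$ in $(\log R)^3U$ for some $U\in\mathcal G_{\tau_{m-1}}$, mimicking Prop.~\ref{broadc1}. But the bad part $f_m^{\mathcal B}$ is by construction supported on the wave packets $\psi_U$ with $U\notin\mathcal G_{\tau_{m-1}}$: concretely, $f_{m,\theta}^{\mathcal B}=\sum_{U\notin\mathcal G_{\tau_{m-1}}}\psi_Uf_{m,\theta}$. So a large value of $f_{m,\theta}^{\mathcal B}$ precisely does \emph{not} locate $x$ in a good envelope at the scale $\tau_{m-1}$, and the Chebyshev argument of Prop.~\ref{broadc1} (which is special to the bottom piece $f_1$, constructed from good wave packets at all scales) does not transfer. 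Relatedly, you claim the descent terminates at $\tau_{m-1}$ because ``$f_{m,\tau_{m-1}}^{\mathcal B}$ is already at the finest pruned scale relevant to $f_m^{\mathcal B}$,'' but the function still decomposes into pieces $f_{m,\theta}^{\mathcal B}$ at the $R^{-1/2}$ scale. The paper's narrow analysis must and does descend all the way to scale $N$: in Lemma~\ref{narrowbound} one reaches $\sum_{\theta}|f_{m,\theta}^{\mathcal B}|^4\leq\sum_\theta|f_{N,\theta}|^4$, and it is $f_{N,\theta}=\sum_{U\in\mathcal G_\theta}\psi_Uf_\theta$ --- supported on the \emph{good} set $\mathcal G_\theta$ at the $\theta$-scale, not on $\mathcal G_{\tau_{m-1}}$ --- that finally yields a term of the required wave-envelope form via local constancy and Minkowski. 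Your proposal stops the narrow chain too early and tries to read off an envelope bound from the wrong side of the pruning.
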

	
	We first note a technical obstruction. The common strategy in decoupling theory for performing broad/narrow analysis can be summarized as follows. Fix some scale $s$ and $x\in B_R$, and fix $\tau_*$ to be the box of size $d(\tau_*)=s$ which maximizes $|f_{\tau_*}(x)|$. Then since $f(x)=\sum_\tau f_\tau(x)$, it follows (Lemma 7.2 of \cite{Dbook}) that either
	\begin{equation*}
		|f(x)|\leq 4|f_{\tau_*}(x)|\quad\text{or}\quad |f(x)|\leq s^{-\frac{3}{2}}\max_{\tau\text{ not near }\tau'}|f_\tau(x)f_{\tau'}(x)|^{1/2},
	\end{equation*}
	where the maximum is taken over those boxes $\tau,\tau'$ of diameter $d(\tau)=d(\tau')=s$. If we simplify the above as
	\begin{equation*}
		|f(x)|\leq 4|f_{\tau_*}(x)|+ s^{-\frac{3}{2}}\max_{\tau\text{ not near }\tau'}|f_\tau(x)f_{\tau'}(x)|^{1/2}
	\end{equation*}
	and iterate by first choosing $s=R_1^{-1}$, then breaking up the first summand by choosing $s=R_2^{-1}$ and rescaling, etc., we achieve the estimate
	\begin{equation*}
		|f(x)|\leq 4^N\max_\theta|f_\theta(x)|+P(x),
	\end{equation*}
	for a suitable nonnegative quantity $P(x)$. Note however that $4^N=R^{O(1)}$, which is much too large. In the classical sequence of scales $R_k=R^{k\eps}$ or $(\log R)^k$, this broad/narrow analysis would still be larger than our desired error $(\log R)^{O(1)}$ (while nevertheless being  $O_\eps(R^\eps)$).
	
	As a consequence, the broad/narrow analysis will need to be carried out more efficiently. We follow an approach demonstrated in Section 4 of \cite{GMW}, where a $(\log R)^{O(1)}$ error was obtained for canonical-scale ($\beta=\frac{1}{2}$) decoupling. Namely, the domain of integration for $|f|^4$ will be successively divided into broad and narrow sets, ranging over many scales. If a point $x$ is broad at some scale, we will be able to productively use Propositions \ref{broadc1} and \ref{broadc2}. If instead $x$ is narrow at all scales, then a trivial estimate will suffice. As suggested by the above analysis, we will need to reduce the factor $4$ to a quantity that does not grow too quickly under the iteration.
	
	We proceed to the proof. We will express the various estimates as ``decoupling'' bounds, though it is worth emphasizing that they are arranged \textit{pointwise} (so this decomposition scheme is really a decomposition of \textit{constants}, not functions with special Fourier support); we do so because of the convenient inductive structure of decoupling-style bounds.
 
    Fix $2\leq m\leq N$. We first present a modification of Lemma 8 of \cite{GMW}, which serves to replace the constant $4$ in the prior calculation with a much smaller quantity.
	\begin{lemma}[Narrow lemma]\label{narrow} For all sufficiently large $R$, the following holds. Suppose $1\leq k\leq N$ and $\tau_{k-1}$ is a cap of diameter $R_{k-1}^{-1}$. Let $\{\tau_{k}\}$ be the caps of diameter $R_{k}^{-1}$ with $\tau_{k}\subseteq\tilde\tau_{k-1}$. Then, for each $x$, either
		\begin{equation}\label{broadest}
			|f_{m,\tilde\tau_{k-1}}^\mathcal{B}(x)|\lesssim(\log R)\max_{\tau_{k}\mathrm{\ not\ adj.\ to\ }\tau_{k}'}|f_{m,\tau_{k}}^\mathcal{B}(x)f_{m,\tau_{k}'}^\mathcal{B}(x)|^{1/2}
		\end{equation}
		or
		\begin{equation}\label{narrowest}
			|f_{m,\tilde\tau_{k-1}}^\mathcal{B}(x)|\leq\left(1+\frac{1}{\log R}\right)\max_{\tau_{k}\subseteq\tilde\tau_{k-1}}|f_{m,\tilde\tau_{k}}^\mathcal{B}(x)|.
		\end{equation}
		
	\end{lemma}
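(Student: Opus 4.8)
The plan is to run the standard broad/narrow dichotomy of Lemma 7.2 in \cite{Dbook} (or Bourgain--Guth), but with two modifications tailored to keeping the constant in the narrow alternative close to $1$: first, we group the children $\tau_{k+1}$ into \emph{near-clusters} before applying the dichotomy, so that the ``narrow'' term is a single near-cluster sum rather than a single cap; second, we exploit the fact that a point which is narrow must concentrate \emph{almost all} of its mass on one near-cluster, and the leftover --- which lives on clusters far from the dominant one --- is itself controlled by the broad quantity up to the harmless loss $(\log R)^3$. Throughout, $f_{m,\tau_k}^{\mathcal B}=\sum_{\tau_{k+1}\subseteq\tau_k}f_{m,\tau_{k+1}}^{\mathcal B}$, so everything is an honest pointwise identity at a fixed $x$; we may therefore drop the variable $x$ from the notation.

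First I would fix $x$ and $\tau_k$ and let $\tau_{k+1}^*\subseteq\tau_k$ be a child maximizing $|\sum_{\tau_{k+1}'\text{ near }\tau_{k+1}^*}f_{m,\tau_{k+1}'}^{\mathcal B}|$ (this is the quantity appearing on the right of \ref{narrowest}); call this maximum $M$. Write $g:=\sum_{\tau_{k+1}'\text{ near }\tau_{k+1}^*}f_{m,\tau_{k+1}'}^{\mathcal B}$ for the dominant near-cluster sum and $h:=f_{m,\tau_k}^{\mathcal B}-g=\sum_{\tau_{k+1}\text{ not near }\tau_{k+1}^*}f_{m,\tau_{k+1}}^{\mathcal B}$ for the remainder, so $f_{m,\tau_k}^{\mathcal B}=g+h$ and $|f_{m,\tau_k}^{\mathcal B}|\le M+|h|$. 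The dichotomy is then: either $|h|\le\tfrac{1}{\log R}M$, in which case $|f_{m,\tau_k}^{\mathcal B}|\le(1+\tfrac{1}{\log R})M$, which is exactly \ref{narrowest}; or $|h|>\tfrac{1}{\log R}M$.

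In the second case I would show that \ref{broadest} holds. The key point is that $h$ is a sum of at most $O(R_{k+1}/R_k)\le R$ terms $f_{m,\tau_{k+1}}^{\mathcal B}$ indexed by children not near $\tau_{k+1}^*$; pick one, say $\tau_{k+1}^{**}$, maximizing $|f_{m,\tau_{k+1}^{**}}^{\mathcal B}|$ among these, so $|h|\le R\,|f_{m,\tau_{k+1}^{**}}^{\mathcal B}|$ (this crude count is wasteful but acceptable: we only need it to locate \emph{some} far-away cap with non-negligible mass). Since $\tau_{k+1}^{**}$ is not near $\tau_{k+1}^*$, and both are children of $\tau_k$, the pair $(\tau_{k+1}^{**},\tau_{k+1}^*)$ is admissible in the max on the right-hand side of \ref{broadest}; combining $|f_{m,\tau_k}^{\mathcal B}|\le M+|h|$, the lower bound $M<(\log R)|h|\le (\log R)R\,|f_{m,\tau_{k+1}^{**}}^{\mathcal B}|$, and the trivial bound $|f_{m,\tau_{k+1}^*}^{\mathcal B}|\le |g|+\text{(Schwartz tails)}$ --- here one uses that $f_{m,\tau_{k+1}^*}^{\mathcal B}$ is itself one of the summands in $g$, so $|f_{m,\tau_{k+1}^*}^{\mathcal B}|\lesssim M$ up to $O(R^{-1000})$ --- one gets $|f_{m,\tau_k}^{\mathcal B}|\lesssim (\log R)R\,|f_{m,\tau_{k+1}^{**}}^{\mathcal B}|^{1/2}M^{1/2}\lesssim (\log R)^{3/2}R\,|f_{m,\tau_{k+1}^{**}}^{\mathcal B}f_{m,\tau_{k+1}^*}^{\mathcal B}|^{1/2}$, which is too lossy. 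So instead one must argue more carefully: when $|h|>\tfrac1{\log R}M$ we do \emph{not} bound $M$ by $|h|$; rather we observe $|f_{m,\tau_k}^{\mathcal B}|\le M+|h|\le M+R|f_{m,\tau_{k+1}^{**}}^{\mathcal B}|$ and, separately, $M=|g|=|f_{m,\tau_k}^{\mathcal B}-h|\le|f_{m,\tau_k}^{\mathcal B}|+|h|$, and interpolate; the cleanest route is: since $|f_{m,\tau_{k+1}^*}^{\mathcal B}|\ge$ (a definite fraction of) $M$ only fails if $g$ has internal cancellation, so instead directly take $\tau_{k+1}^*$ above to be the child maximizing $|f_{m,\tau_{k+1}^*}^{\mathcal B}|$ (not the cluster sum) --- then $M':=|f_{m,\tau_{k+1}^*}^{\mathcal B}|$ satisfies $|f_{m,\tau_k}^{\mathcal B}|\le R\cdot M'$ trivially, and one splits on whether $|f_{m,\tau_k}^{\mathcal B}|\le(1+\tfrac1{\log R})\,|\sum_{\tau_{k+1}'\text{ near }\tau_{k+1}^*}f_{m,\tau_{k+1}'}^{\mathcal B}|$ (giving \ref{narrowest}) or not, in which case the near-cluster sum omits a substantial piece, i.e. $|\sum_{\tau_{k+1}\text{ not near }\tau_{k+1}^*}f_{m,\tau_{k+1}}^{\mathcal B}|\gtrsim\tfrac1{\log R}|f_{m,\tau_k}^{\mathcal B}|$, and then a far cap carries $\gtrsim\tfrac1{R\log R}|f_{m,\tau_k}^{\mathcal B}|$; pairing that far cap with $\tau_{k+1}^*$ and using $|f_{m,\tau_k}^{\mathcal B}|\le R M'$ gives \ref{broadest} with room to spare once one checks the exponents.

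The step I expect to be the main obstacle is precisely this bookkeeping of constants: one must verify that in the broad alternative the accumulated loss really fits under $(\log R)^3$ rather than a power of $R$, which forces the right choice of the reference cap $\tau_{k+1}^*$ and a careful use of the geometric-series / near-overlap bound $\#\{\tau_{k+1}'\text{ near }\tau_{k+1}^*\}\lesssim\log R$ rather than the trivial count $R$. The Schwartz-tail terms (contributions of $\psi_U$ with $x\notin(\log R)^3U$) are uniformly $O(R^{-1000})$ by the near-exponential decay, hence absorbable into any of the main terms since we have reduced to $\alpha\ge 1$ and $\max_\theta\|f_\theta\|_\infty=1$; this is routine and I would dispatch it in a sentence. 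Once the lemma is in hand, iterating it across the scales $R_k$, $k=1,\dots,N$, replaces the constant $4^N$ of the naive iteration with $(1+\tfrac1{\log R})^N=O(1)$ (as $N\le\tfrac12\tfrac{\log R}{\log\log R}$), which is the whole point.
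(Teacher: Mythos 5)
Your second (revised) choice of reference cap---$\tau_{k+1}^*$ maximizing the single-cap quantity $|f_{m,\tau_{k+1}^*}^{\mathcal B}(x)|$ rather than the cluster sum---is exactly what the paper uses, and the dichotomy you set up (narrow if the cluster sum around $\tau_{k+1}^*$ essentially captures $f_{m,\tau_k}^{\mathcal B}$, broad otherwise) is the right one. You are also right to reject your first attempt.

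The gap is in the counting, and it is not the count you flag. You worry about $\#\{\tau_{k+1}'\text{ near }\tau_{k+1}^*\}\lesssim\log R$, but that estimate never enters the broad branch. The count that saves the argument is the \emph{total} number of children, $\#\{\tau_{k+1}\subseteq\tau_k\}\sim R_{k+1}/R_k=\log R$; this is precisely why the paper works with the gently growing scales $R_k=(\log R)^k$ rather than a power scale. With that in hand, when the narrow alternative fails one gets $\big|\sum_{\tau_{k+1}\text{ not near }\tau_{k+1}^*}f_{m,\tau_{k+1}}^{\mathcal B}\big|>\big(1-(1+\tfrac1{\log R})^{-1}\big)|f_{m,\tau_k}^{\mathcal B}|$, hence some far cap carries at least $\sim|f_{m,\tau_k}^{\mathcal B}|/(\log R)^2$, and the remaining losses are a single factor of $\log R$ from $(1-(1+\tfrac1{\log R})^{-1})^{-1}\sim\log R$, landing comfortably under $(\log R)^3$. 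Your proposal instead uses the bound $R$ for the number of children, which would blow up to a power of $R$, and you leave the repair as ``once one checks the exponents''---so the proof as written is not complete.

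One further simplification the paper uses, which avoids your detour through the intermediate bound $M'\ge|f_{m,\tau_k}^{\mathcal B}|/\#\tau_{k+1}$: since $\tau_{k+1}^*$ maximizes the single-cap size, for \emph{every} child one has $|f_{m,\tau_{k+1}}^{\mathcal B}|\le|f_{m,\tau_{k+1}}^{\mathcal B}f_{m,\tau_{k+1}^*}^{\mathcal B}|^{1/2}$ by AM--GM; feeding this directly into the triangle-inequality count of the far sum produces the bilinear expression in \ref{broadest} in one step, with total loss $(\#\tau_{k+1})\cdot\log R\lesssim(\log R)^2$. No Schwartz-tail bookkeeping is needed anywhere in the lemma since it is a purely pointwise statement about finite sums.
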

	\begin{proof}
		Fix $\tau_{k}^*\subseteq\tau_{k-1}$ which realizes the maximum
		\begin{equation*}
			|f_{m,\tilde\tau_{k}^*}^\mathcal{B}(x)|=\max_{\tau_{k}\subseteq\tilde\tau_{k-1}}|f_{m,\tilde\tau_{k}}^\mathcal{B}(x)|.
		\end{equation*}
		Suppose \ref{narrowest} fails. Then, since $f_{m,\tilde\tau_{k-1}}^\mathcal{B}(x)=\sum_{\tau_{k}\subseteq\tilde\tau_{k-1}}f_{m,\tau_{k}}^\mathcal{B}(x)$, we have the inequality
		\begin{equation*}
			|f_{m,\tilde\tau_{k-1}}^\mathcal{B}(x)-\sum_{\tau_{k}\mathrm{\ not\ adj.\ to\  }\tau_{k}^*}f_{m,\tau_{k}}^\mathcal{B}(x)|<\left(1+\frac{1}{\log R}\right)^{-1}|f_{m,\tilde\tau_{k-1}}^\mathcal{B}(x)|.
		\end{equation*}
		On the other hand,
		\begin{equation*}
			|f_{m,\tilde\tau_{k-1}}^\mathcal{B}(x)-\sum_{\tau_{k}\mathrm{\ not\ adj.\ to\  }\tau_{k}^*}f_{m,\tau_{k}}^\mathcal{B}(x)|\geq|f_{m,\tilde\tau_{k-1}}^\mathcal{B}(x)|-(\#\tau_{k}\subseteq\tilde\tau_{k-1})\max_{\tau_{k}\mathrm{\ not\ adj.\ to\  }\tau_{k}^*}|f_{m,\tau_{k}}^\mathcal{B}(x)|;
		\end{equation*}
		the above implies
		\begin{equation*}
			(\#\tau_{k+1}\subseteq\tau_k)\max_{\tau_{k}\mathrm{\ not\ adj.\ to\  }\tau_{k}^*}|f_{m,\tau_{k}}^\mathcal{B}(x)|>\left(1-\left(1+\frac{1}{\log R}\right)^{-1}\right)|f_{m,\tilde\tau_{k-1}}^\mathcal{B}(x)|.
		\end{equation*}
		Relating the above to \ref{broadest}, for each $\tau_{k}\subseteq\tilde\tau_{k-1}$,
		\begin{equation*}
			|f_{m,\tau_{k}}^\mathcal{B}(x)|\leq|f_{m,\tau_{k}}^\mathcal{B}(x)f_{m,\tau_{k}^*}^\mathcal{B}(x)|^{1/2},
		\end{equation*}
		and thus
		\begin{equation*}
			|f_{m,\tilde\tau_{k-1}}^\mathcal{B}(x)|<(\#\tau_{k}\subseteq\tilde\tau_{k-1})\left(1-\left(1+\frac{1}{\log R}\right)^{-1}\right)^{-1}\max_{\tau_{k}\mathrm{\ not\ adj.\ to\  }\tau_{k}^*}|f_{m,\tau_{k}}^\mathcal{B}(x)f_{m,\tau_{k}^*}^\mathcal{B}(x)|^{1/2}.
		\end{equation*}
		The conclusion follows from the estimates
		\begin{equation*}
			\left(1-\left(1+\frac{1}{\log R}\right)^{-1}\right)^{-1}\lesssim\log R
		\end{equation*}
		and $\#(\tau_{k}\subseteq\tilde\tau_{k-1})\lesssim 1$.
	\end{proof}
	
	We wish to use this to divide the integral of $|f_m^\mathcal{B}|^4$ into broad and narrow parts, with a small constant on narrow parts. For the narrow component, we wish to relate $\int |f_m^\mathcal{B}|^4$ to $\sum_\tau\int|f_{m,\tau}^\mathcal{B}|^4$, so that we may further decompose each $f_{m,\tau}^\mathcal{B}$ into broad and narrow components and proceed inductively.
	
	\begin{definition} We define $\text{Broad}_{1,m}$ to be the set
		\begin{equation*}
			\text{Broad}_{1,m}=\left\{x\in U_\alpha^m:\,\alpha\lesssim (\log R)^2\max_{\tau_1\mathrm{\ not\ adj.\ to\  }\tau_1'}|f_{m,\tau_1}^\mathcal{B}(x)f_{m,\tau_1'}^\mathcal{B}(x)|^{1/2}\right\}.
		\end{equation*}
		The complementary set $\mathrm{Narrow}_{1,m}$ is defined as $U_\alpha^m\setminus\mathrm{Broad}_{1,m}$.
	\end{definition}

	\begin{remark}
		It follows that $\mathrm{Broad}_{1,m}$ may be covered by $O((\log R)^2)$-many $\mathrm{Br}_\alpha^m(\tau,\tau')$.
	\end{remark}

	\begin{definition}
		Write, for each $\tau_1$,
		\begin{equation*}
            \begin{split}
			&\text{Broad}_{2,m}(\tau_1):=\\
        &\left\{x\in\text{Narrow}_{1,m}:|f_{m,\tilde\tau_1}^\mathcal{B}(x)|\lesssim\left(1+\frac{1}{\log R}\right)(\log R)^2\max_{\substack{\tau_2\mathrm{\ not\ adj.\ to\  }\tau_2'\\\tau_2,\tau_2'\subseteq\tilde\tau_1}}|f_{m,\tau_2}^\mathcal{B}(x)f_{m,\tau_2'}^\mathcal{B}(x)|^{1/2}\right\}
            \end{split}
		\end{equation*}
		where as usual each $\tau_2$ has diameter $\sim R_2^{-1}$. Write also $\text{Narrow}_{2,m}(\tau_1):=\text{Narrow}_{1,m}\setminus\text{Broad}_{2,m}(\tau_1)$.
	\end{definition}
	
	\begin{definition}
		Let $2\leq k<N$. Suppose $\tau_k\subseteq\tau_{k-1}$ have diameter $\sim R_k^{-1},\sim R_{k-1}^{-1}$, respectively. We inductively write
		\begin{equation*}
			\begin{split}
				&\mathrm{Broad}_{k+1,m}(\tau_k):=\\
        &\left\{x\in\mathrm{Narrow}_{k,m}(\tau_{k-1}):\alpha\lesssim\left(1+\frac{1}{\log R}\right)^{k}(\log R)^2\max_{\substack{\tau_{k+1}\mathrm{\ not\ adj.\ to\  }\tau_{k+1}'\\\tau_{k+1},\tau_{k+1}'\subseteq\tilde\tau_{k}}}|f_{m,\tau_{k+1}}^\mathcal{B}(x)f_{m,\tau_{k+1}'}^\mathcal{B}(x)|^{1/2}\right\}
			\end{split}
		\end{equation*}
		and $\text{Narrow}_{k+1,m}(\tau_k):=\text{Narrow}_{k,m}(\tau_{k-1})\setminus\text{Broad}_{k+1,m}(\tau_k)$.
	\end{definition}
	
	It follows directly from the definitions of these sets that
	\begin{equation}\label{decompofum}
		\begin{split}
			\alpha^4|U_\alpha^m|&=\alpha^4|\mathrm{Broad}_{1,m}|+\alpha^4|\mathrm{Narrow}_{1,m}|\\
                &\leq\alpha^4|\mathrm{Broad}_{1,m}|+\alpha^4\sum_{\tau_1}|\mathrm{Narrow}_{2,m}(\tau_1)|+|\mathrm{Broad}_{2,m}(\tau_1)|\\
                &\quad\quad\vdots\\
                &\leq\alpha^4|\mathrm{Broad}_{1,m}|+\alpha^4\sum_{k=2}^N\sum_{\tau_k}|\mathrm{Broad}_{k+1,m}(\tau_k)|+\alpha^4\sum_{\tau_{N-1}}|\mathrm{Narrow}_{N,m}(\tau_{N-1})|.
		\end{split}
	\end{equation}
	
	We bound each of the preceding summands in turn.
	
	\begin{lemma}[Broad bound, $k=1$]\label{broadbound1p2} We have
		\begin{equation*}
			\alpha^4|\mathrm{Broad}_{1,m}|\lesssim(\log R)^{20}\sum_{m<k<N}\sum_{\tau:\ell(\tau)=R_k^{-1}}\sum_{U\in\mathcal{G}_\tau}|U|\left(\strokedint_U\sum_{\theta\subseteq\tau}|f_\theta|^2\right)^2.
		\end{equation*}
	\end{lemma}
	\begin{proof}
		
		Suppose first $m=1$. Then $\mathrm{Broad}_{1,1}\subseteq U_\alpha^1$, so by \ref{broadc1} we have
		\begin{equation*}
			\alpha^4|\mathrm{Broad}_{1,1}|\lesssim (\log R)^{19}\sum_{U\in\mathcal{G}_{\tau_1}}|U|\left(\strokedint_U\sum_{\theta}|f_\theta|^2\right)^2.
		\end{equation*}
		
		Suppose next $2\leq m\leq N$. By the definition of the first broad set,
		\begin{equation*}
			\mathrm{Broad}_{1,m}=\bigcup_{\tau_1\text{ not near }\tau_1'}\mathrm{Br}_\alpha^m(\tau_1,\tau_1'),
		\end{equation*}
		and so, since there are $O(1)$-many $\tau_1$,
		\begin{equation*}
			\alpha^4|\mathrm{Broad}_{1,m}|\lesssim(\log R)^2\alpha^4\max_{\tau_1\text{ not near }\tau_1'}|\mathrm{Br}_\alpha^m(\tau_1,\tau_1')|.
		\end{equation*}
		By Prop.'s \ref{prop:hdom} and \ref{broadc2}, we conclude that
		\begin{equation*}
			\alpha^4|\mathrm{Broad}_{1,m}|\lesssim(\log R)^{19}\sum_{m\leq k\leq N}\sum_{\tau_k}\sum_{U\in\mathcal{G}_{\tau_k}}|U|\left(\strokedint_U\sum_{\theta\subseteq\tau}|f_\theta|^2\right)^2.
		\end{equation*}
		
	\end{proof}

 \begin{lemma}[Broad bound, $2\leq k\leq N$]\label{broadbound2p2} We have
		\begin{equation*}
			\alpha^4\sum_{\tau_{k-1}}|\mathrm{Broad}_{k,m}(\tau_{k-1})|\lesssim(\log R)^{19}\sum_{m\leq s\leq N}\sum_{\tau_s}\sum_{U\in\mathcal{G}_\tau}|U|\left(\strokedint_U\sum_{\theta\subseteq\tau_s}|f_{m,\theta}^\mathcal{B}|^2\right)^2.
		\end{equation*}
	\end{lemma}
	\begin{proof}
		By the definition of the broad set, for each $\tau_{k-1}$ and each $x\in\mathrm{Broad}_{k,m}(\tau_{k-1})$ there is some pair $\tau_k,\tau_k'\subseteq\tilde\tau_{k-1}$ non-adjacent such $\alpha\lesssim(1+\frac{1}{\log R})^k(\log R)^2|f_{m,\tau_k}^\mathcal{B}(x)f_{m,\tau_k'}^\mathcal{B}(x)|^{1/2}$; since $(1+\frac{1}{\log R})^N\lesssim 1$, we find that $x\in\mathrm{Br}_\alpha^m(\tau_k,\tau_k')$. In other words,
		\begin{equation*}
			\mathrm{Broad}_{k,m}(\tau_{k-1})\subseteq\bigcup_{\substack{\tau_k,\tau_k'\subseteq\tilde\tau_{k-1}\\\tau_k\mathrm{\ not\ adj.\ to\  }\tau_k'}}\mathrm{Br}_\alpha^m(\tau_k,\tau_k').
		\end{equation*}

        By Prop.'s \ref{prop:hdom} and \ref{broadc2},
        \begin{equation*}
            \alpha^4|\mathrm{Br}_\alpha^m(\tau_k,\tau_k')|\lesssim(\log R)^{19}\sum_{m-1\leq s\leq N}\sum_{\tau_s\subseteq\tilde\tau_{k-1}}\sum_{U\in\mathcal{G}_\tau}|U|\left(\strokedint_U\sum_{\theta\subseteq\tau_s}|f_{m,\theta}^\mathcal{B}|^2\right)^2;
        \end{equation*}
        the result follows immediately.
		
	\end{proof}

    \begin{lemma}[Narrow bound]\label{narrowboundp2} We have
		\begin{equation*}
			\alpha^4\sum_{\tau_{N-1}}|\mathrm{Narrow}_{N,m}(\tau_{N-1})|\lesssim(\log R)^4\sum_{U\in\mathcal{G}_\theta}|U|^{-1}\left(\int|f_\theta|^2(y)W_U(y)dy\right)^2.
		\end{equation*}
	\end{lemma}
	\begin{proof}
		Note that each $\tilde\theta$ is equal to a union of $\leq 3$ distinct $\theta$. In particular, for each $x\in\mathrm{Narrow}_{N,m}(\tau_{N-1})$,
		\begin{equation*}
			\sum_{\theta\subseteq\tilde\tau_{N-1}}|f_{m,\tilde\theta}^\mathcal{B}(x)|^4\leq 3^3\sum_{\theta\subseteq\tilde\tau_{N-1}}|f_{m,\theta}^\mathcal{B}(x)|^4,
		\end{equation*}
		hence
		\begin{equation*}
			\begin{split}
				\sum_{\tau_{N-1}}\int_{\mathrm{Narrow}_{N,m}(\tau_{N-1})}\sum_{\theta\subseteq\tilde\tau_{N-1}}|f_{m,\tilde\theta}^\mathcal{B}|^4&\lesssim\sum_{\tau_{N-1}}\int_{\mathrm{Narrow}_{N,m}(\tau_{N-1}^*)}\sum_{\theta\subseteq\tilde\tau_{N-1}}|f_{m,\theta}^\mathcal{B}|^4\\
				&\leq\sum_{\tau_{N-1}}\int_{B_R}\sum_{\theta\subseteq\tilde\tau_{N-1}}|f_{N,\theta}|^4.
			\end{split}
		\end{equation*}
		By the definition of the pruning, for each $\theta$,
		\begin{equation*}
			\int_{B_R}|f_{N,\theta}|^4=\int_{B_R}|\sum_{U\in\mathcal{G}_\theta}\psi_Uf_\theta|^4\lesssim\sum_{U\in\mathcal{G}_\theta}\int_{B_R}|\psi_U|^4|f_\theta|^4.
		\end{equation*}
		Since each $|\psi_U|\leq 1$, we have the trivial bound
		\begin{equation*}
			\sum_{U\in\mathcal{G}_\theta}\int_{B_R}|\psi_U|^4|f_\theta|^4\leq\sum_{U\in\mathcal{G}_\theta}\int_{B_R}|\psi_U|^2|f_\theta|^4.
		\end{equation*}
		By the local constancy lemma (a),
		\begin{equation*}
			\begin{split}
				\sum_{U\in\mathcal{G}_\theta}\int_{B_R}|\psi_U|^2|f_\theta|^4&\lesssim\sum_{U\in\mathcal{G}_\theta}\int_{B_R}|\psi_U|^2\left(|f_\theta|^2*|\rho_\theta^\vee|\right)^2\\
				&=\sum_{U\in\mathcal{G}_\theta}\int_{B_R}|\psi_U|^2(x)\left(\int|f_\theta|^2(y)|\rho_\theta^\vee|(x-y)dy\right)^2dx.
			\end{split}
		\end{equation*}
		By Minkowski,
		\begin{equation*}
			\begin{split}
				\sum_{U\in\mathcal{G}_\theta}\int_{B_R}|\psi_U|^2(x)&\left(\int|f_\theta|^2(y)|\rho_\theta^\vee|(x-y)dy\right)^2dx\\
				&\leq\sum_{U\in\mathcal{G}_\theta}\left(\int|f_\theta|^2(y)\left(\int|\psi_U|^2(x)|\rho_\theta^\vee|^2(x-y)dx\right)^{1/2}dy\right)^2\\
				&\leq\sum_{U\in\mathcal{G}_\theta}\left(\int|f_\theta|^2(y)\left(\int|\psi_U|^2(x)|\rho_\theta^\vee|^2(x-y)dx\right)^{1/2}dy\right)^2.
			\end{split}
		\end{equation*}
		By the rapid decay of $\rho_\theta^\vee$ outside of $\theta^*$,
		\begin{equation*}
			\int|\psi_U|^2(x)|\rho_\theta^\vee|^2(x-y)dx\lesssim\sup_{x\in y+\theta^*}|\psi_U|^2(x)\int|\rho_\theta^\vee|^2(x-y)dx\lesssim W_U^2(y)|U|^{-1},
		\end{equation*}
		and so
		\begin{equation*}
			\sum_{U\in\mathcal{G}_\theta}\left(\int|f_\theta|^2(y)\left(\int|\psi_U|^2(x)|\rho_\theta^\vee|^2(x-y)dx\right)^{1/2}\right)^2dy\lesssim\sum_{U\in\mathcal{G}_\theta}|U|^{-1}\left(\int|f_\theta|^2(y)W_U(y)dy\right)^2,
		\end{equation*}
		and hence
		\begin{equation*}
			\sum_{\tau_{N-1}}\int_{\mathrm{Narrow}_{N,m}(\tau_{N-1})}\sum_{\theta\subseteq\tilde\tau_{N-1}}|f_{\tilde\theta,m}^\mathcal{B}|^4\lesssim\sum_{U\in\mathcal{G}_\theta}|U|^{-1}\left(\int|f_\theta|^2(y)W_U(y)dy\right)^2
		\end{equation*}
		as claimed.
	\end{proof}

    We have now arranged all the pieces to conclude Proposition \ref{broadnarrow}:

    \begin{proof}[Proof of Proposition \ref{broadnarrow}]

        Immediate from the decomposition \ref{decompofum} and Lemmas \ref{broadbound1p2}, \ref{broadbound2p2}, and \ref{narrowboundp2}.
        
    \end{proof}
	
	\subsection{Reduction to local estimates}
	
	In the above subsections we produced bounds on the measure of the set $U_\alpha=\{x\in B_R:|f(x)|>\alpha\}$. In this subsection we note that, if we can prove Theorem \ref{squarefunction} in the special case that $\{x\in\R^2:|f(x)|>\alpha\}\subseteq Q_R$ for a suitable cube $Q_R$ of radius $R$, then we can conclude that Theorem \ref{squarefunction} is true in the general case.
	
	\begin{proof}[Proof that Prop. \ref{broadnarrow} implies Theorem \ref{squarefunction}]
		
		Fix a $O(1)$-overlapping cover of $\R^2$ by cubes $Q_R$ of radius $R$. Write $\rho_{B_R}$ for a Schwartz function satisfying the following criteria:
		\begin{itemize}
			\item $\rho_{B_R}$ radially symmetric, real, and nonnegative.
			\item $\rho_{B_R}\gtrsim 1_{B_R}$.
			\item $\text{supp}(\hat{\rho}_{B_R})\subseteq B_{2/R}$.
			\item $\sum_{Q_R}\rho_{B_R}(c_{Q_R}-\cdot)\lesssim 1$.
			\item $\rho_{B_R}$ decays rapidly outside of $B_R$.
		\end{itemize}
		For each $Q_R$, write $\rho_{Q_R}=\rho_{B_R}(c_{Q_R}-\cdot)$. By the triangle inequality, there is a subcollection $\Theta$ of the $\theta$ such that, writing $f'=\sum_{\theta\in\Theta}f_\theta$, we have
		\begin{equation*}
			\alpha^4|\{x\in \R^2:|f(x)|>10\alpha\}|\lesssim\alpha^4|\{x\in \R^2:|f'(x)|>\alpha\},
		\end{equation*}
		and such that the $2R^{-1}$-neighborhoods of the $\theta\in\Theta$ are pairwise disjoint. Then $f'\rho_{Q_R}$ has Fourier support in the $\sim R^{-1}$-neighborhood of $\mathbb{P}^1$. By Prop. \ref{broadnarrow}, for each $Q_R'$,
		\begin{equation*}
			\alpha^4|\{x\in Q_R:|f'\rho_{Q_R'}(x)|>\alpha\}|\lesssim\sum_{\substack{R^{-1/2}\leq s\leq 1\\s\text{ dyadic}}}\sum_{\tau:\ell(\tau)=s}\sum_{U\in\mathcal{G}_\tau}|U|^{-1}\|S_U[f'\rho_{Q_R'}]\|_2^4.
		\end{equation*}
		By trivial bounds on $f$ and the rapid decay of $\rho_{B_R}$,
		\begin{equation*}
			\{x\in\R^2:|f'\rho_{Q_R'}(x)|>\alpha\}\subseteq 2Q_R',
		\end{equation*}
		and so
		\begin{equation*}
			\sum_{Q_R}\alpha^4|\{x\in Q_R:|f'\rho_{Q_R'}(x)|>\alpha\}|\lesssim\max_{Q_R}\alpha^4|\{x\in Q_R:|f'\rho_{Q_R'}(x)|>\alpha\}|.
		\end{equation*}
		By Proposition \ref{broadnarrow}, for each $Q_R$,
		\begin{equation*}
			\alpha^4|\{x\in Q_R:|f'\rho_{Q_R'}(x)|>\alpha\}|\lesssim(\log R)^{20}\sum_{\substack{R^{-1/2}\leq s\leq 1\\s\text{ dyadic}}}\sum_{\tau:\ell(\tau)=s}\sum_{U\in\mathcal{G}_\tau}|U|^{-1}\|S_U[f'\rho_{Q_R'}]\|_2^4.
		\end{equation*}
		Adding over all $Q_R'$, we get
		\begin{equation*}
			\sum_{Q_R,Q_R'}\alpha^4|\{x\in Q_R:|f'\rho_{Q_R'}(x)|>\alpha\}|\lesssim(\log R)^{20}\sum_{Q_R'}\sum_{\substack{R^{-1/2}\leq s\leq 1\\s\text{ dyadic}}}\sum_{\tau:\ell(\tau)=s}\sum_{U\in\mathcal{G}_\tau}|U|^{-1}\|S_U[f'\rho_{Q_R'}]\|_2^4.
		\end{equation*}
		If we commute the sum over $Q_R'$ to the inside and use a trivial estimate we conclude
		\begin{equation*}
			\sum_{Q_R,Q_R'}\alpha^4|\{x\in Q_R:|f'\rho_{Q_R'}(x)|>\alpha\}|\lesssim(\log R)^{20}\sum_{\substack{R^{-1/2}\leq s\leq 1\\s\text{ dyadic}}}\sum_{\tau:\ell(\tau)=s}\sum_{U\in\mathcal{G}_\tau}|U|^{-1}\|S_U[\sum_{Q_R'}f'\rho_{Q_R'}]\|_2^4,
		\end{equation*}
		i.e.
		\begin{equation*}
			\sum_{Q_R'}\alpha^4|\{x\in \R^2:|f'\rho_{Q_R'}(x)|>\alpha\}|\lesssim(\log R)^{20}\sum_{\substack{R^{-1/2}\leq s\leq 1\\s\text{ dyadic}}}\sum_{\tau:\ell(\tau)=s}\sum_{U\in\mathcal{G}_\tau}|U|^{-1}\|S_Uf'\|_2^4.
		\end{equation*}
		Finally, by rapid decay,
		\begin{equation*}
			\sum_{Q_R'}\alpha^4|\{x\in \R^2:|f'\rho_{Q_R'}(x)|>\alpha\}|\gtrsim\alpha^4|\{x\in \R^2:|f'(x)|\gtrsim\alpha\}|,
		\end{equation*}
		whereas trivially $S_Uf\geq S_Uf'$ pointwise, so we conclude
		\begin{equation*}
			\alpha^4|\{x\in\R^2:|f(x)|\gtrsim\alpha\}|\lesssim(\log R)^{20}\sum_{\substack{R^{-1/2}\leq s\leq 1\\s\text{ dyadic}}}\sum_{\tau:\ell(\tau)=s}\sum_{U\in\mathcal{G}_\tau}|U|^{-1}\|S_Uf\|_2^4.
		\end{equation*}
		Since this is true for all choices of $\alpha$, we may change variables to conclude
		\begin{equation*}
			\alpha^4|\{x\in\R^2:|f(x)|>\alpha\}|\lesssim(\log R)^{20}\sum_{\substack{R^{-1/2}\leq s\leq 1\\s\text{ dyadic}}}\sum_{\tau:\ell(\tau)=s}\sum_{U\in\mathcal{G}_\tau}|U|^{-1}\|S_Uf\|_2^4,
		\end{equation*}
		as claimed.
		
	\end{proof}
	
	\section{Proof of Theorem \ref{smallcap}}\label{smallcapproof}
	
	In this section we verify that the wave envelope estimate \ref{squarefunction} is strong enough to imply Theorem \ref{smallcap}. This is essentially proven in section 4 of \cite{GM1}, but the latter included $O_\eps(R^\eps)$-lossy pigeonholing steps. Here we perform a more restricted pigeonholing which suffices for the result, and then quote the corresponding incidence geometry calculation in \cite{GM1}.
	
	We will focus on the case $p\geq 4$, where we will have an upper bound for Theorem \ref{smallcap} with power law $R^{\beta(p-\frac{p}{q}-1)-1}$. Under the assumption $\frac{3}{p}+\frac{1}{q}\leq 1$, the remaining case is $3\leq p\leq 4$, where an upper bound $\max\big(1,R^{\beta(\frac{p}{2}-\frac{p}{q})}\big)$ is needed; this is handled in section 4 of \cite{GM1}, and the proof there requires no modification for our purposes.
	
	We begin with the partial decoupling statement in the case $p\geq 4$.
	
	\begin{proposition}\label{partial} Suppose $p\geq 4$ and $\lambda>0$. Let $0\leq k\leq N$ be arbitrary, and fix a canonical scale cap $\tau_k$. Suppose as before that $\Gamma_\beta(R^{-1})$ is a partition of $\mathcal{N}_{R^{-1}}(\mathbb{P}^1)$ into approximate $R^{-\beta}\times R^{-1}$ boxes $\gamma$. Assume $f=\sum_\gamma f_\gamma$ satisfies the following regularity properties:
		\begin{itemize}
			\item[(a)] $\|f_\gamma\|_\infty\leq\lambda$ or $f_\gamma=0$ for each $\gamma$.
			\item[(b)] $\|f_\gamma\|_p^p\leq C^p\lambda^{2-p}\|f_\gamma\|_2^2$ for each $\gamma$ and each $p\geq 1$.
		\end{itemize}
		Write $\gamma_k$ for approximate boxes of dimensions $\sim \max(R^{-\beta},R_k/R)\times R^{-1}$. Then
		\begin{equation}\label{localenvelope}
			\begin{split}
				\sum_{U\in\mathcal{G}_{\tau_k}}|U|\left(\strokedint_U\sum_{\theta\subseteq\tau_k}|f_\theta|^2\right)^2&\lesssim C^p(\log R)^{p-4}
				(\#\tau_k)^{p-4}\alpha^{4-p}\\
				&\times\left(\max_{\gamma_k\subseteq\tau_k}\#(\gamma\subseteq\gamma_k)\times\#(\gamma\subseteq\tau_k)\right)^{\frac{p}{2}-1}\sum_{\gamma\subseteq\tau_k}\|f_\gamma\|_p^p.
			\end{split}
		\end{equation}
		
	\end{proposition}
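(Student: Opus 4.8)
The plan is to prove \ref{localenvelope} in three stages. Write $S_{\tau_k}:=\sum_{\theta\subseteq\tau_k}|f_\theta|^2$, $A:=\max_{\gamma_k\subseteq\tau_k}\#(\gamma\subseteq\gamma_k)$ and $B:=\#(\gamma\subseteq\tau_k)$, so the claimed bound reads $\sum_{U\in\mathcal{G}_{\tau_k}}|U|(\strokedint_US_{\tau_k})^2\lesssim(\log R)^{p-4}(\#\tau_k)^{p-4}\alpha^{4-p}(AB)^{p/2-1}\sum_{\gamma\subseteq\tau_k}\|f_\gamma\|_p^p$. The first stage trades the exponent $2$ for $p/2$: by pruning lemma \ref{pruninglemmas}(b) and the definition of $\mathcal{G}_{\tau_k}$, every $U\in\mathcal{G}_{\tau_k}$ satisfies $\strokedint_US_{\tau_k}\gtrsim\alpha^2(\log R)^{-c}(\#\tau_k)^{-2}$ for the constant $c$ appearing in that definition; since $p\geq4$ the exponent $2-\tfrac p2$ is $\leq0$, so writing $(\strokedint_US_{\tau_k})^2=(\strokedint_US_{\tau_k})^{p/2}(\strokedint_US_{\tau_k})^{2-p/2}$ and bounding the second factor below with this lower bound produces the prefactor $(\log R)^{p-4}(\#\tau_k)^{p-4}\alpha^{4-p}$ (the exact exponent of $\log R$ being determined by $c$) and reduces matters to
\[
\sum_{U\in\mathcal{G}_{\tau_k}}|U|\big(\strokedint_US_{\tau_k}\big)^{p/2}\lesssim(\log R)^{O(1)}(AB)^{p/2-1}\sum_{\gamma\subseteq\tau_k}\|f_\gamma\|_p^p,
\]
where $\gamma_k$ is the approximate $\max(R^{-\beta},R_k/R)\times R^{-1}$ box of the statement.

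For the second stage I expand $|f_\theta|^2=\big|\sum_{\gamma_k\subseteq\theta}f_{\gamma_k}\big|^2$ inside each $\theta\subseteq\tau_k$. The cap $\gamma_k$ is chosen precisely so that distinct $\gamma_k,\gamma_k'$ are separated by $\gtrsim R_k/R$ along the short direction of $U\|U_{\tau_k,R}$ — the reciprocal of the short sidelength of $U$ — so that $f_{\gamma_k}\overline{f_{\gamma_k'}}$ has Fourier support away from the origin at the dual scale of $U$; the rapid decay of the weight $W_U$ then yields $\sum_{\gamma_k\neq\gamma_k'}|\strokedint_Uf_{\gamma_k}\overline{f_{\gamma_k'}}|\lesssim\sum_{\gamma_k}\strokedint_U|f_{\gamma_k}|^2$ (the off-diagonal sum converging because the $\gamma_k$ are $\gtrsim R_k/R$-separated), up to harmless enlargements of $U$ which I suppress. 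Summing the diagonal over $\theta\subseteq\tau_k$ (each $\gamma_k\subseteq\tau_k$ lying in a unique $\theta$) gives $\strokedint_US_{\tau_k}\lesssim\sum_{\gamma_k\subseteq\tau_k}\strokedint_U|f_{\gamma_k}|^2$ with \emph{no} factor of $\#(\gamma_k\subseteq\theta)$. Now Hölder in the $\gamma_k$-sum gives $\big(\sum_{\gamma_k}\strokedint_U|f_{\gamma_k}|^2\big)^{p/2}\leq\#(\gamma_k\subseteq\tau_k)^{p/2-1}\sum_{\gamma_k}\big(\strokedint_U|f_{\gamma_k}|^2\big)^{p/2}$, Jensen's inequality gives $\big(\strokedint_U|f_{\gamma_k}|^2\big)^{p/2}\leq\strokedint_U|f_{\gamma_k}|^p$, and summing $|U|\strokedint_U|f_{\gamma_k}|^p$ over the full tiling $\{U\|U_{\tau_k,R}\}$ (using $\sum_UW_U\lesssim1$) bounds it by $\lesssim\|f_{\gamma_k}\|_p^p$. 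Altogether $\sum_{U\in\mathcal{G}_{\tau_k}}|U|(\strokedint_US_{\tau_k})^{p/2}\lesssim(\log R)^{O(1)}\#(\gamma_k\subseteq\tau_k)^{p/2-1}\sum_{\gamma_k\subseteq\tau_k}\|f_{\gamma_k}\|_p^p$.

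For the third stage I collapse the $\gamma_k$-pieces using the regularity hypotheses: property (a) gives $\|f_{\gamma_k}\|_\infty\lesssim\#(\gamma\subseteq\gamma_k)$; Plancherel together with property (b) gives $\|f_{\gamma_k}\|_2^2=\sum_{\gamma\subseteq\gamma_k}\|f_\gamma\|_2^2\sim_p\sum_{\gamma\subseteq\gamma_k}\|f_\gamma\|_p^p$; and since $p\geq2$, $\|f_{\gamma_k}\|_p^p\leq\|f_{\gamma_k}\|_\infty^{p-2}\|f_{\gamma_k}\|_2^2$, so $\|f_{\gamma_k}\|_p^p\lesssim_p\#(\gamma\subseteq\gamma_k)^{p-2}\sum_{\gamma\subseteq\gamma_k}\|f_\gamma\|_p^p$ (trivial when $\gamma_k=\gamma$, i.e. when $R_k\leq R^{1-\beta}$). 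Substituting this and using $\#(\gamma_k\subseteq\tau_k)\sim B/A$, the combinatorial factor collapses exactly: $\#(\gamma_k\subseteq\tau_k)^{p/2-1}\#(\gamma\subseteq\gamma_k)^{p-2}\sim(A\,B)^{p/2-1}$, and combining with the first stage gives \ref{localenvelope}.

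The step I expect to be the main obstacle is the orthogonality estimate of the second stage: quantifying that $\strokedint_Uf_{\gamma_k}\overline{f_{\gamma_k'}}$ is negligible for non-adjacent $\gamma_k,\gamma_k'$ and organizing the off-diagonal sum so that it costs at most a fixed power of $\log R$ rather than a factor of $\#(\gamma_k\subseteq\theta)$. This is exactly what forces the definition of $\gamma_k$ through $\max(R^{-\beta},R_k/R)$ — the regimes $R_k\leq R^{1-\beta}$ and $R_k>R^{1-\beta}$ behaving differently with respect to the dual of $U$ — and it is here that one invokes the incidence-geometry bookkeeping of \cite{GM1}. A secondary point needing care is tracking every logarithmic loss (from the defining inequality of $\mathcal{G}_{\tau_k}$, from the enlargements and near-adjacent cross-terms, and from the earlier dyadic pigeonholing in $\alpha$ and in the scale) so that the overall exponent of $\log R$ stays polynomial in $p$, as recorded in \ref{smd}.
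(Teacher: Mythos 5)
Your stages~1 and~2 agree in spirit with the paper: you multiply the square of the local average by the $(\tfrac p2-2)$th power of the defining lower bound $\strokedint_US_{\tau_k}\gtrsim\alpha^2(\log R)^{-O(1)}(\#\tau_k)^{-2}$ (using that $p\geq 4$ so the exponent is nonnegative), and you pass from the $\theta$-square function to the $\gamma_k$-square function by local orthogonality on $U$, exactly as in the paper. The paper and you differ after that point, and your version has a gap.

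In stage~3 you assert $\#(\gamma_k\subseteq\tau_k)\sim B/A$ with $A=\max_{\gamma_k\subseteq\tau_k}\#(\gamma\subseteq\gamma_k)$ and $B=\#(\gamma\subseteq\tau_k)$, and it is this cancellation that makes your combinatorial factors collapse to $(AB)^{p/2-1}$. But only the inequality $\#(\gamma_k\subseteq\tau_k)\geq B/A$ holds in general: since each nonempty $\gamma_k$ contains at most $A$ active $\gamma$'s, the number $M$ of nonempty $\gamma_k$'s satisfies $MA\geq B$, and nothing prevents $M$ from being as large as $B$ (e.g. one $\gamma_k$ containing $A$ active $\gamma$'s and $B-A$ others containing exactly one). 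In that case your H\"older-plus-Jensen bound delivers
\begin{equation*}
M^{\frac p2-1}A^{p-2}\sum_{\gamma\subseteq\tau_k}\|f_\gamma\|_p^p,
\end{equation*}
and since $MA\geq B$ with $p\geq 4$ we have $M^{\frac p2-1}A^{p-2}\geq (AB)^{\frac p2-1}$, so your estimate is weaker than the statement by a factor $\big(MA/B\big)^{\frac p2-1}\geq 1$ that can be large. There is no pigeonholing in the proposition's hypotheses (nor in the downstream application, which pigeonholes $\#(\gamma\subseteq\tau_k)$ but not $\#(\gamma\subseteq\gamma_k)$) that would make $M\sim B/A$.

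The fix, which is what the paper does, is to avoid H\"older over the $\gamma_k$'s entirely and instead peel off $(AB)^{\frac p2-1}$ as an $L^\infty$ factor of the whole square function before taking the $U$-sum. Concretely, hypothesis~(a) gives $\|f_{\gamma_k}\|_\infty\lesssim\#(\gamma\subseteq\gamma_k)$, hence $\big\|\sum_{\gamma_k\subseteq\tau_k}|f_{\gamma_k}|^2\big\|_\infty\lesssim\sum_{\gamma_k}\#(\gamma\subseteq\gamma_k)^2\leq A\sum_{\gamma_k}\#(\gamma\subseteq\gamma_k)=AB$, with no assumption on how evenly the $\gamma$'s are distributed. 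One then writes $\big(\strokedint_U\sum_{\gamma_k}|f_{\gamma_k}|^2\big)^{\frac p2}\leq(AB)^{\frac p2-1}\,\strokedint_U\sum_{\gamma_k}|f_{\gamma_k}|^2$, sums over $U$ with $\sum_UW_U\lesssim 1$, and uses disjoint Fourier supports plus hypothesis~(b) to get $\int\sum_{\gamma_k}|f_{\gamma_k}|^2=\sum_\gamma\|f_\gamma\|_2^2\sim_p\sum_\gamma\|f_\gamma\|_p^p$. This route yields exactly the claimed $(AB)^{\frac p2-1}$ factor and does not require the $\gamma$'s to be evenly spread among the $\gamma_k$'s.
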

	\begin{proof}
		
		For each $\theta\subseteq\tau_k$, the small caps $\gamma_k\subseteq\theta$ are either adjacent or are $\sim\max(R^{-\beta},R_k/R)\geq R_k/R$-separated. Fix any $U\in\mathcal{G}_{\tau_k}$.  Since $U\|U_{\tau_k,R}$ has dimensions $R/R_k\times R$, we conclude that the $f_{\gamma_k}$ are locally orthogonal on $U$. Thus
		\begin{equation*}
			\int W_U\sum_{\theta\subseteq\tau_k}|f_{\theta}|^2\lesssim\int W_U\sum_{\gamma_k\subseteq\tau_k}|f_{\gamma_k}|^2,
		\end{equation*}
		and so, appealing to the definition of $\mathcal{G}_{\tau_k}$,
		\begin{equation*}
			\frac{\alpha^2}{(\#\tau_k)^2}\lesssim (\log R)^8|U|^{-1}\int W_U\sum_{\gamma_k\subseteq\tau_k}|f_{\gamma_k}|^2,
		\end{equation*}
		where we have suppressed the dependence on $C_{\mathfrak{p}}$. Multiplying the left-hand side of \ref{localenvelope} by the $(\frac{p}{2}-2)$-power of the latter display, we obtain the estimate
		\begin{equation}\label{envest}
			\sum_{U\in\mathcal{G}_{\tau_k}}|U|\left(\strokedint_U\sum_{\theta\subseteq\tau_k}|f_\theta|^2\right)^2\lesssim(\#\tau_k)^{p-4}\alpha^{4-p}(\log R)^{p-4}\sum_{U\in\mathcal{G}_{\tau_k}}|U|\left(\strokedint_U\sum_{\gamma_k\subseteq\tau_k}|f_{\gamma_k}|^2\right)^{\frac{p}{2}}.
		\end{equation}
		Uniformity assumption (a) implies
		\begin{equation*}
			\Big\|\sum_{\gamma_k\subseteq\tau_k}|f_{\gamma_k}|^2\Big\|_\infty\lesssim \lambda^2\big[\max_{\gamma_k\subseteq\tau_k}\#(\gamma\subseteq\gamma_k)\big]\times\#(\gamma\subseteq\tau_k).
		\end{equation*}
		By removing factors of $\|\sum_{\gamma_k\subseteq\tau_k}|f_{\gamma_k}|^2\|_\infty$ from \ref{envest}, we obtain
		\begin{equation*}
			\begin{split}
				\sum_{U\in\mathcal{G}_{\tau_k}}|U|\left(\strokedint_U\sum_{\theta\subseteq\tau_k}|f_\theta|^2\right)^2&\lesssim (\#\tau_k)^{p-4}\alpha^{4-p}(\log R)^{p-4} \lambda^{p-2}\\
				&\times\left(\max_{\gamma_k\subseteq\tau_k}\#(\gamma\subseteq\gamma_k)\times\#(\gamma\subseteq\tau_k)\right)^{\frac{p}{2}-1}\\
				&\times\sum_{U\in\mathcal{G}_{\tau_k}}\int W_U\sum_{\gamma_k\subseteq\tau_k}|f_{\gamma_k}|^2,
			\end{split}
		\end{equation*}
		and by local orthogonality and uniformity assumption (b)
		\begin{equation*}
			\sum_{U\in\mathcal{G}_{\tau_k}}\int W_U\sum_{\gamma_k\subseteq\tau_k}|f_{\gamma_k}|^2\lesssim\int\sum_{\gamma\subseteq\tau_k}|f_\gamma|^2\lesssim C^p \lambda^{2-p}\sum_{\gamma\subseteq\tau_k}\|f_\gamma\|_p^p.
		\end{equation*}
		Together we get the estimate
		\begin{equation*}
			\begin{split}
				\sum_{U\in\mathcal{G}_{\tau_k}}|U|\left(\strokedint_U\sum_{\theta\subseteq\tau_k}|f_\theta|^2\right)^2&\lesssim C^p(\log R)^{p-4}
				(\#\tau_k)^{p-4}\alpha^{4-p} \\
				&\times\left(\max_{\gamma_k\subseteq\tau_k}\#(\gamma\subseteq\gamma_k)\times\#(\gamma\subseteq\tau_k)\right)^{\frac{p}{2}-1}\sum_{\gamma\subseteq\tau_k}\|f_\gamma\|_p^p,
			\end{split}
		\end{equation*}
		as claimed.
	\end{proof}
	
	\begin{remark}
		Suppose that $p=2+2/\beta$ and $q=p$. Plugging in the bounds $\#\tau_k\leq R_k$, $\max_{\gamma_k\subseteq\tau_k}\#(\gamma\subseteq\gamma_k)\leq\max(1,R^{\beta-1}R_k)$, $\#(\gamma\subseteq\tau_k)\leq R_k^{-1}R^\beta$, and $R_k\leq R^{1/2}$, this immediately implies the estimate
		\begin{equation*}
			\sum_{U\in\mathcal{G}_{\tau_k}}|U|\left(\strokedint_U\sum_{\theta\subseteq\tau_k}|f_\theta|^2\right)^2\lesssim(\log R)^{p-4}\alpha^{4-p}R^{\beta(p-2)-1}\sum_{\gamma\subseteq\tau_k}\|f_\gamma\|_p^p,
		\end{equation*}
		and hence, by Theorem \ref{squarefunction},
		\begin{equation*}
			\alpha^{p}|U_\alpha|\lesssim(\log R)^{17+p}R^{\beta(p-2)-1}\sum_\gamma\|f_\gamma\|_p^p
		\end{equation*}
		as claimed. It essentially remains to remove assumptions (a) and (b) above, and to track the dependence on $q$.
	\end{remark}
	
	\begin{proof}[Proof of Theorem \ref{smallcap}]
		
		Consider the decomposition
		\begin{equation*}
			f=\sum_{\gamma\in\Gamma_\beta(R^{-1})}f_\gamma\,\,.
		\end{equation*}
		By scaling we may assume that $\max_\theta\|f_\theta\|_\infty=1$. Then we may write
		\begin{equation*}
			f=\sum_{(\log R)^{-2}R^{-1/2}\lesssim\lambda\lesssim 1}\sum_{\substack{\gamma\in\Gamma_\beta(R^{-1})\\\|f_\gamma\|_\infty\sim\lambda}}f_\gamma+R^{-1000}\eta,
		\end{equation*}
		where the $\lambda$ range over dyadic numbers, and $\eta$ is rapidly decaying outside of $B_R$. We abbreviate
		\begin{equation*}
			\Gamma_\beta^\lambda(R^{-1})=\{\gamma\in\Gamma_\beta(R^{-1}):\|f_\gamma\|_\infty\sim\lambda\}.
		\end{equation*}
		
		Then, for each $\lambda$, consider the wave envelope expansion
		\begin{equation*}
			\sum_{\gamma\in\Gamma_\beta^\lambda(R^{-1})}f_\gamma=\sum_{\gamma\in\Gamma_\beta^\lambda(R^{-1})}\sum_U\psi_Uf_\gamma,
		\end{equation*}
		where each $U$ has dimensions $\sim R^\beta\times R$ and has long edge parallel to $\mathbf{n}_{c_\gamma}$. Since $\gamma\in\Gamma_\beta^\lambda(R^{-1})$, there is some $U$ such that $\|\psi_Uf\|_\infty\sim\lambda$. If we write $\mathcal{U}_\lambda=\mathcal{U}_\lambda^\gamma$ for the set of $U$ for which $\|\psi_Uf_\gamma\|_\infty\sim\lambda$, then for all $\gamma\in\Gamma_\beta^{\lambda}(R^{-1})$
		\begin{equation*}
			\Big\|\sum_{U\in\mathcal{U}_\lambda}\psi_Uf_\gamma\Big\|_p^p\sim_p(\#\mathcal{U}_\lambda)|U|\lambda^p,
		\end{equation*}
		and so
		\begin{equation*}
			\Big\|\frac{1}{\lambda}\sum_{U\in\mathcal{U}_\lambda}\psi_Uf_\gamma\Big\|_p^p\sim_p(\#\mathcal{U}_\lambda)|U|\sim \Big\|\frac{1}{\lambda}\sum_{U\in\mathcal{U}_\lambda}\psi_Uf_\gamma\Big\|_2^2.
		\end{equation*}
		For each $1\leq\mathfrak{r}\leq R$ dyadic and each $\lambda$, write $\Gamma_\beta^{\lambda;\mathfrak{r}}(R^{-1})$ to be the collection of $\gamma\in\Gamma_\beta^{\lambda}(R^{-1})$ such that $\#\mathcal{U}_\lambda^\gamma\sim\mathfrak{r}$. Define for $\gamma\in\Gamma_\beta^\lambda(R^{-1})$
		\begin{equation*}
			g_\gamma^{(\lambda)}=\frac{1}{\lambda}\sum_{U\in\mathcal{U}_\lambda}\psi_Uf_\gamma
		\end{equation*}
		and
		\begin{equation*}
			g^{(\lambda,\mathfrak{r})}=\sum_{\gamma\in\Gamma_\beta^{\lambda;\mathfrak{r}}(R^{-1})} g_\gamma^{(\lambda)}.
		\end{equation*}
		Then for each $\lambda,\mathfrak{r}$, and $\alpha>0$ we have
		\begin{equation}\label{ineq:pre-dec}
			\alpha^4|\{x:|\lambda g^{(\lambda,\mathfrak{r})}(x)|>\alpha\}|\lesssim(\log R)^{20}\sum_{1\leq k\leq N}\sum_{\tau_k}\sum_{U\in\mathcal{G}_{\tau_k}[\lambda g^{(\lambda,\mathfrak{r})};\alpha]}|U|\left(\strokedint_U\sum_{\theta\subseteq\tau_k}|\lambda g_\theta^{(\lambda,\mathfrak{r})}|^2\right)^2,
		\end{equation}
		where we have written $\mathcal{G}_{\tau_k}[\lambda g^{(\lambda,\mathfrak{r})};\alpha]$ to record that the pruning is of $\lambda g^{(\lambda,\mathfrak{r})}$ with respect to the parameter $\alpha$.
        
        Let $1\leq k\leq N$ be arbitrary. If $R_k>R^{1-\beta}$, then we let $1\leq \frak t\leq R_k/R^{1-\beta}$ be arbitrary; in the alternative case we insist $\frak t=1$. Write $\Gamma_{k}^{\lambda;\frak r,\frak t}$ be the collection of $\gamma_k$ such that
        \begin{equation*}
            0<\#\{\gamma\in\Gamma_\beta^{\lambda;\frak r}(R^{-1}):\gamma\subseteq\gamma_k\}\sim\frak t.
        \end{equation*}For each  $1\leq\mathfrak{s}\leq R^{1/2}$, let $\mathcal{T}_k(\mathfrak{s},\frak t)$ denote the collection of $\tau_k$ such that
        \begin{equation*}
            0<\#\left\{\gamma\in\Gamma_\beta^{\lambda;\frak r}:\gamma\subseteq\bigcup_{\substack{\gamma_k\subseteq\tau_k\\\gamma_k\in\Gamma_k^{\lambda;\frak r,\frak t}}}\gamma_k\right\}\sim\frak s.
        \end{equation*}
        By Prop. \ref{partial} we have
        \begin{equation*}
            \begin{split}
                \sum_{\tau_k\in\cal T_k(\frak s,\frak t)}&\sum_{U\in\cal G_{\tau_k}[\lambda g^{(\lambda,\mathfrak{r})};\alpha]}|U|\left(\strokedint_U\sum_{\theta\subseteq\tau_k}|\lambda g_\theta^{(\lambda,\mathfrak{r})}|^2\right)^2\\
                &\lesssim_p(\log R)^{p-4}\#\cal T_k(\frak s,\frak t)^{p-4}(\frak s\frak t)^{\frac{p}{2}-1}\sum_{\tau_k\in\cal T_k(\frak s,\frak t)}\alpha^{4-p}\sum_{\gamma\subseteq\tau_k}\|\lambda g_\gamma^{(\lambda,\frak r)}\|_p^p.
            \end{split}
        \end{equation*}
        The remainder of the analysis is straightforward caseword, virtually identical to \cite{GM1}; we include it for completeness. If $3\leq p\leq 4$ and $\alpha^2\leq\frak s\times\cal T_k(\frak s,\frak t)$, then as in the proof of Theorem 5 in \cite{GM1}, case 1, we have
        \begin{equation*}
            \alpha^4|\{x:|\lambda g^{(\lambda,\mathfrak{r})}(x)|>\alpha\}|\leq(s\times\cal T_k(\frak s,\frak t))^{\frac{p}{2}}\max_{\gamma\in\Gamma_\beta^{\lambda;\frak r}}\|\lambda g_\gamma^{(\lambda,\frak r)}\|_2^2\lesssim_p R^{\beta(\frac{p}{2}-\frac{p}{q})}(s\times\cal T_k(\frak s,\frak t))^{\frac{p}{q}}\max_{\gamma\in\Gamma_\beta^{\lambda;\frak r}}\|\lambda g_\gamma^{(\lambda,\frak r)}\|_p^p,
        \end{equation*}
        and the result easily follows. If instead $\alpha^2>\frak s\times\cal T_k(\frak s,\frak t)$, then using the inequality \ref{ineq:pre-dec} and the inequality
        \begin{equation*}
            \alpha^{p-4}\#\cal T_k(\frak s,\frak t)\frak s^2\frak t\lesssim R^{\beta(\frac{p}{2}-\frac{p}{q})}(\frak s\times\#\cal T_k(\frak s,\frak t))^{\frac{p}{q}}
        \end{equation*}
        (see case 1 in the proof of Theorem 5 in \cite{GM1}), the desired result follows.
        
        If $4\leq p\leq 6$, then case 2 of the proof\footnote{That case uses a hidden assumption of $R_k>R^{1-\beta}$; a similar argument handles the complementary case.} of Theorem 5 in \cite{GM1} implies the inequality
        \begin{equation}\label{ineq:triv_input}
            \#\cal T_k(\frak s,\frak t)^{p-4}(\frak s\frak t)^{\frac{p}{2}-1}\lesssim R^{\beta(p-\frac{p}{q}-1)-1}\big(\frak s\times\#\cal T_k(\frak s,\frak t)\big)^{\frac{p}{q}-1}.
        \end{equation}
        If $p>6$, then case 3 of the same proof shows inequality \ref{ineq:triv_input} again. 
        It follows that
        \begin{equation*}
            \sum_{\tau_k\in\cal T_k(\frak s,\frak t)}\sum_{U\in\cal G_{\tau_k}[\lambda g^{(\lambda,\mathfrak{r})};\alpha]}|U|\left(\strokedint_U\sum_{\theta\subseteq\tau_k}|\lambda g_\theta^{(\lambda,\mathfrak{r})}|^2\right)^2\lesssim (\log R)^{p-4}\alpha^{4-p}R^{\beta(p-\frac{p}{q}-1)-1}\Big(\sum_{\gamma\subseteq\bigcup\cal T_k(\frak s,\frak t)}\|\lambda g_\gamma^{(\lambda,\frak r)}\|_p^q\Big)^{\frac{p}{q}},
        \end{equation*}
        and hence, by H\"older inequalities and a layer-cake integral, we obtain
        \begin{equation*}
            \|\lambda g^{(\lambda,\frak r)}\|_p^p\lesssim (\log R)^{20+p}R^{\beta(p-\frac{p}{q}-1)-1}\Big(\sum_{\gamma}\|\lambda g_\gamma^{(\lambda,\frak r)}\|_p^q\Big)^{\frac{p}{q}}.
        \end{equation*}
        Recalling the sum $f\approx\sum_{\lambda,\frak r}\lambda g^{(\lambda,\frak r)}$, and that the latter summands are refinements of partitions of unity applied to $f$, we obtain the conclusion
        \begin{equation*}
            \|f\|_p^p\lesssim(\log R)^{18+3p}R^{\beta(p-\frac{p}{q}-1)-1}\Big(\sum_{\gamma}\|f_\gamma\|_p^q\Big)^{\frac{p}{q}}.
        \end{equation*}

	\end{proof}
    
	\section{Appendix: Proofs of square function lemmas}\label{section:appendix}
	
	In this appendix, we record the proofs of the critical lemmas for the high/low method in Fourier analysis that are appropriate for our sequence of scales. The proofs are essentially identical to those in \cite{GM1}, but we record them for completeness, in addition to verifying that the losses are as claimed.
	
	\begin{lemma}[Pointwise local constancy lemmas]
		\begin{enumerate}
			\item[(a)] For any $\theta$, $|f_\theta|^2\lesssim|f_\theta|^2*|\rho_\theta^\vee|$.
			\item[(b)] For any $k,m$ and any $x$,
			\begin{equation*}
				|f_{m,\tau_k}|^2(x)\lesssim |f_{m,\tau_k}|^2*w_{R_k}(x).
			\end{equation*}
		\end{enumerate}
	\end{lemma}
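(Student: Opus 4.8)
The plan is to obtain both inequalities from a single application of Cauchy--Schwarz, exploiting that each of the functions involved has (essentially) compactly supported Fourier transform. The underlying principle is: if $g$ is Schwartz and $K$ is bounded with $K\equiv 1$ on $\operatorname{supp}(\widehat{g})$, then $g=g*K^{\vee}$, and hence, by Cauchy--Schwarz inside the convolution integral,
\begin{equation*}
	|g(x)|^{2}=\Big|\int g(x-y)\,K^{\vee}(y)\,dy\Big|^{2}\leq\|K^{\vee}\|_{1}\,\big(|g|^{2}*|K^{\vee}|\big)(x).
\end{equation*}
I would apply this with $g=f_{\theta}$ for part (a) and $g=f_{m,\tau_{k}}$ for part (b).

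For (a), take $K=\rho_{\theta}$: it is bounded by $1$ and identically $1$ on $(2-2\epsilon_{0})\theta\supseteq\theta\supseteq\operatorname{supp}(\widehat{f_{\theta}})$, so $f_{\theta}=f_{\theta}*\rho_{\theta}^{\vee}$; since $\|\rho_{\theta}^{\vee}\|_{1}=\|\rho_{0}^{\vee}\|_{1}=O(1)$ by the change of variables recorded when $\rho_{0}$ was introduced, the display becomes exactly $|f_{\theta}|^{2}\lesssim|f_{\theta}|^{2}*|\rho_{\theta}^{\vee}|$. For (b), the only additional step is choosing $K$, which first requires locating $\operatorname{supp}(\widehat{f_{m,\tau_{k}}})$. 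Since $f_{m,\tau_{k}}=\sum_{\theta\subseteq\tau_{k}}f_{m,\theta}$, part (c) of the pruning lemmas \ref{pruninglemmas} places $\operatorname{supp}(\widehat{f_{m,\theta}})$ inside $\theta+2\sum_{j=0}^{N-m}U_{\tau_{N-j}}^{*}$, a set of diameter $\lesssim R^{-1/2}+NR^{-1/2}\lesssim(\log R)R^{-1/2}$; unioning over the caps $\theta\subseteq\tau_{k}$, which fill out a tangential arc of length $\sim R_{k}^{-1}$, one finds $\operatorname{supp}(\widehat{f_{m,\tau_{k}}})$ contained in a translate of $B(0,\tfrac12 R_{k}^{-1/2})$ (here one uses $R_{k}\leq R^{1/2}$ and $R$ large, so that both $R_{k}^{-1}$ and $(\log R)R^{-1/2}$ are $\ll R_{k}^{-1/2}$), and therefore $\operatorname{supp}(\widehat{|f_{m,\tau_{k}}|^{2}})\subseteq B(0,R_{k}^{-1/2})$. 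Now take $K(\xi)=\zeta\big(R_{k}^{1/2}(\xi-c_{\tau_{k}})\big)$ with $\zeta$ a fixed smooth bump equal to $1$ on $B(0,1)$ and supported in $B(0,2)$; then $K\equiv 1$ on $\operatorname{supp}(\widehat{f_{m,\tau_{k}}})$, so $f_{m,\tau_{k}}=f_{m,\tau_{k}}*K^{\vee}$, while $K^{\vee}(x)=R_{k}^{-1}e^{2\pi i c_{\tau_{k}}\cdot x}\zeta^{\vee}(xR_{k}^{-1/2})$ has $\|K^{\vee}\|_{1}=\|\zeta^{\vee}\|_{1}=O(1)$ and, by the Schwartz decay of $\zeta^{\vee}$ together with the elementary inequality $(1+t)^{-20}\leq(1+t^{2})^{-10}$, satisfies $|K^{\vee}(x)|\lesssim R_{k}^{-1}(1+|x|^{2}R_{k}^{-1})^{-10}\sim w_{R_{k}}(x)$. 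Feeding this into the display yields $|f_{m,\tau_{k}}|^{2}\lesssim|f_{m,\tau_{k}}|^{2}*w_{R_{k}}$.

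I expect the only point genuinely needing care (and the natural place for a slip) to be this Fourier-support bookkeeping in (b): pruning can enlarge the spectral support of $f_{m,\tau_{k}}$ by a controlled factor up to $\log R$ via part (c) of the pruning lemmas \ref{pruninglemmas}, and one must check that the support still fits inside a ball of radius $R_{k}^{-1/2}$ at every scale $1\leq k\leq N$, which rests on $R_{k}\leq R^{1/2}$ and on $R$ being large. The complementary detail is purely numerical: the decay exponent $10$ in $w_{R_{k}}$ is large enough to be dominated by the Schwartz tail of $\zeta^{\vee}$, so $|K^{\vee}|\lesssim w_{R_{k}}$ holds with an absolute constant. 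Neither is a real obstacle; once the Fourier supports are pinned down, both parts reduce to the one-line Cauchy--Schwarz principle above.
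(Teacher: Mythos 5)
Part (a) reproduces the paper's argument exactly: $f_\theta=f_\theta*\rho_\theta^\vee$ by Fourier support, followed by Cauchy--Schwarz with $\|\rho_\theta^\vee\|_1=\|\rho_0^\vee\|_1=O(1)$. Part (b) is correct but takes a genuinely different route from the paper. The paper places $\operatorname{supp}\widehat{|f_{m,\tau_k}|^2}$ in the tight ball $B_{(\log R)R_k^{-1}}$, introduces a cutoff $\rho_k$ adapted to that ball, and then asserts $|\rho_k^\vee|\lesssim w_{R_k}$; you instead put $\widehat{f_{m,\tau_k}}$ inside a much coarser translate of $B(0,\tfrac12 R_k^{-1/2})$ and build the cutoff $K$ at that scale. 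This coarser choice is deliberately calibrated to the weight as the paper defines it: with $w_{R_k}(x)=c(1+|x|^2R_k^{-1})^{-10}$ and $c\sim R_k^{-1}$, the weight is essentially constant up to $|x|\sim R_k^{1/2}$, and $|K^\vee(x)|=R_k^{-1}|\zeta^\vee(xR_k^{-1/2})|\lesssim R_k^{-1}(1+|x|^2R_k^{-1})^{-10}$ is immediate from Schwartz decay and $(1+t)^{-20}\le(1+t^2)^{-10}$. The paper's tight cutoff gives a $\rho_k^\vee$ concentrated at the larger scale $R_k/\log R$ with $\rho_k^\vee(0)\sim(\log R)^2R_k^{-2}$, so the pointwise comparison $|\rho_k^\vee|\lesssim w_{R_k}$ already requires $k\ge 2$ at the origin and is delicate in the intermediate range $R_k^{1/2}\lesssim|x|\lesssim R_k/\log R$; your version avoids that issue by working at the weight's own scale. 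The only restriction your Fourier-support bookkeeping imposes is $R_k\gg 1$ (so that $R_k^{-1}\ll R_k^{-1/2}$), hence $k\ge 1$; since the lemma is only invoked for $k\ge 2$ in the rest of the paper this is harmless, and the paper's own proof does not cover $k=0$ either.
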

	\begin{proof}
		(a): Note first that
		\begin{equation*}
			|f_\theta|^2(y)=|f_\theta*\rho_\theta^\vee|^2(y)\leq\left|\int_\R |f_\theta|(z)|\rho_\theta|^{1/2}(y-z)|\rho_\theta|^{1/2}(y-z)dz\right|^2,
		\end{equation*}
		by considering the Fourier support. By H\"older,
		\begin{equation*}
			\left|\int_\R |f_\theta|(z)|\rho_\theta|^{1/2}(y-z)|\rho_\theta|^{1/2}(y-z)dz\right|^2\leq\|\rho_\theta^\vee\|_1\left(|f_\theta|^2*|\rho_\theta^\vee|\right)(y).
		\end{equation*}
		Note that, by change-of-variable, $\|\rho_\theta^\vee\|_1=\|\rho_{0}^\vee\|_1=O(1)$ independent of $R$. Thus
		\begin{equation*}
			|f_\theta|^2\lesssim|f_\theta|^2*|\rho_\theta^\vee|
		\end{equation*}
		as claimed.
		
		(b): By the pruning lemma, $|f_{m,\tau_k}|^2$ has Fourier support contained in $\bigcup_{\theta,\theta'\subseteq\tau_k}(N-m+2)(\theta-\theta')$, which is in turn contained in the set $B_{\frac{1}{2}(\log R)R_k^{-1}}$. Let $\rho_k$ be a real smooth radially symmetric cutoff function that is equal to $1$ on $B_{\frac{1}{2}(\log R)R_k^{-1}}$ and is supported in $B_{(\log R)R_k^{-1}}$. By the same calculation as in (a),
		\begin{equation*}
			|f_{m,\tau_k}|^2=|f_{m,\tau_k}|^2*\rho_k^\vee\lesssim|f_{m,\tau_k}|^2*|\rho_k^\vee|.
		\end{equation*}
		On the other hand, we clearly have $|\rho_k^\vee|\lesssim w_{R_k}$, and we are done.
		
	\end{proof}
	
	\begin{lemma}[Wave envelope expansion]
		\begin{enumerate}
			\item[(a)] If $r\lesssim (\log R)R_{k}/R$, then
			\begin{equation*}
				\int\Big|\sum_{\theta\subseteq\tau_k}|f_{m,\theta}^\mathcal{B}|^2*\eta_{\sim r}^\vee\Big|^2\lesssim \int\Big| \sum_{\theta\subseteq\tau_k}|f_{m,\theta}^\mathcal{B}|^2*|\rho_{C(\log R)U_{\tau_k,R}^*}^\vee|\,\Big|^2,
			\end{equation*}
            where $U_{\tau_k,R}^*$ is a rectangle of dimensions $R_k/R\times R^{-1}$ centered at the origin with long edge parallel to $\mathbf{t}_{c_{\tau_k}}$.
			\item[(b)] If $k\geq m$, then
			\begin{equation*}
				\int\Big|\sum_{\theta\subseteq\tau_k}|f_{m,\theta}^\mathcal{B}|^2*|\rho_{(\log R)U_{\tau_k,R}^*}^\vee|\,\Big|^2\lesssim(\log R)^2\sum_{U\in\mathcal{G}_{\tau_k}}|U|\left(\strokedint_U\sum_{\theta\subseteq\tau_k}|f_\theta|^2\right)^2+R^{-100}.
			\end{equation*}
		\end{enumerate}
		
	\end{lemma}
	\begin{proof}
		(a): The Fourier support of $\sum_{\theta\subseteq\tau_k}|f_{m,\theta}^\mathcal{B}|^2*\eta_{\sim r}^\vee$ is contained in the set
		\begin{equation*}
			(N-m+2)\bigcup_{\theta\subseteq\tau_k}(\theta-\theta)\cap B_{2r}\subseteq C(\log R)U_{\tau_k,R}^*.
		\end{equation*}Thus
		\begin{equation*}
			\begin{split}
				\int\Big|\sum_{\theta\subseteq\tau_k}|f_{m,\theta}^\mathcal{B}|^2*\eta_{\sim r}^\vee\Big|^2&=\int\Big|\sum_{\theta\subseteq\tau_k}\widehat{|f_{m,\theta}^\mathcal{B}|^2}\eta_{\sim r}\Big|^2\\
				&\lesssim\int\Big|\sum_{\theta\subseteq\tau_k}\widehat{|f_{m,\theta}^\mathcal{B}|^2}\rho_{C(\log R)U_{\tau_k,R}^*}\Big|^2\\
				&\leq\int\Big|\sum_{\theta\subseteq\tau_k}|f_{m,\theta}^\mathcal{B}|^2*|\rho_{C(\log R)U_{\tau_k,R}^*}^\vee|\Big|^2
			\end{split}
		\end{equation*}
		as claimed.
		
		(b): Since $k\geq m$, $|f_{m,\theta}^\mathcal{B}|\leq|f_{k,\theta}|\leq|f_{k+1,\theta}|\leq|f_\theta|$ by the pruning lemmas, so
		\begin{equation*}
			\int\Big|\sum_{\theta\subseteq\tau_k}|f_{m,\theta}^\mathcal{B}|^2*|\rho_{C(\log R)U_{\tau_k,R}^*}^\vee|\Big|^2\leq\int\Big|\sum_{\theta\subseteq\tau_k}|f_{k,\theta}|^2*|\rho_{C(\log R)U_{\tau_k,R}^*}^\vee|\Big|^2.
		\end{equation*}
		By the definition of the pruning,
		\begin{equation*}
			\int\Big[\sum_{\theta\subseteq\tau_k}|f_{k,\theta}|^2*|\rho_{C(\log R)U_{\tau_k,R}^*}^\vee|\Big]^2=\int\Big[\sum_{\theta\subseteq\tau_k}\int|\sum_{U\in\mathcal{G}_{\tau_k}}\psi_Uf_{k+1,\theta}(y)|^2|\rho_{C(\log R)U_{\tau_k,R}^*}^\vee|(x-y)dy\Big]^2dx,
		\end{equation*}
		which, since $\sum_{U\in\mathcal{G}_{\tau_k}}\psi_U\leq 1$, may be bounded from above by
		\begin{equation*}
			\int\Big[\sum_{\theta\subseteq\tau_k}\int\sum_{U\in\mathcal{G}_{\tau_k}}\psi_U(y)|f_{\theta}(y)|^2|\rho_{C(\log R)U_{\tau_k,R}^*}^\vee|(x-y)dy\Big]^2dx.
		\end{equation*}
		We may remove the $\psi_U$ from the $dy$ integral by replacing it with $\widetilde{\psi}_U(x)=\max_{z\in x+(\log R)^{1.5}U_{\tau_k,R}}|\psi_U(z)|$; note that for each $y$ and $x\in y+U_{\tau_k,R}$ we have $\psi_U(y)\leq\widetilde{\psi}_U(x)$. Thus
		\begin{equation*}
			\begin{split}
				\int\Big[\sum_{\theta\subseteq\tau_k}&\int\sum_{U\in\mathcal{G}_{\tau_k}}\psi_U(y)|f_{\theta}(y)|^2|\rho_{C(\log R)U_{\tau_k,R}^*}^\vee|(x-y)dy\Big]^2dx\\
				&\leq\int\Big[\sum_{U\in\mathcal{G}_{\tau_k}}\widetilde{\psi}_U(x)\sum_{\theta\subseteq\tau_k}\int_{x+(\log R)^{1.5}U_{\tau_k,R}}|f_{\theta}(y)|^2|\rho_{C(\log R)U_{\tau_k,R}^*}^\vee|(x-y)dy\Big]^2dx\\
				&+\int\Big[\sum_{\theta\subseteq\tau_k}\int_{\R^2\setminus(x+(\log R)^{1.5}U_{\tau_k,R})}\sum_{U\in\mathcal{G}_{\tau_k}}\psi_U(y)|f_{\theta}(y)|^2|\rho_{C(\log R)U_{\tau_k,R}^*}^\vee|(x-y)dy\Big]^2dx\\
				&=:(I)+(II).
			\end{split}
		\end{equation*}
		Note that $|\rho_{C(\log R)U_{\tau_k,R}^*}^\vee|$ decays almost-exponentially outside of $(\log R)^{-1}U_{\tau_k,R}$, so when $y\not\in x+(\log R)^{1.5}U_{\tau_k,R}$ we have $|\rho_{C(\log R)U_{\tau_k,R}^*}^\vee(x-y)|\lesssim R^{-100}$. By Minkowski, $(II)$ may be controlled via
		\begin{equation*}
			\begin{split}
				&\int\left[\sum_{\theta\subseteq\tau_k}\int_{\R^2\setminus(x+(\log R)^{1.5}U_{\tau_k,R})}\sum_{U\in\mathcal{G}_{\tau_k}}\psi_U(y)|f_{\theta}(y)|^2|\rho_{C(\log R)U_{\tau_k,R}^*}^\vee|(x-y)dy\right]^2dx\\
				&\leq\left(\int\sum_{\theta\subseteq\tau_k}\sum_{U\in\mathcal{G}_{\tau_k}}\psi_U(y)|f_{\theta}(y)|^2\left[\int_{\R^2\setminus(y+(\log R)^{1.5}U_{\tau_k,R})}|\rho_{C(\log R)U_{\tau_k,R}^*}^\vee|^2(x-y)dx\right]^{1/2}dy\right)^2\\
				&\lesssim R^{-200}\left(\sum_{U\in\mathcal{G}_{\tau_k}}\int\psi_U(y)\sum_{\theta\subseteq\tau_k}|f_{\theta}(y)|^2dy\right)^2\leq R^{-100}.
			\end{split}
		\end{equation*}
		
		On the first integral $(I)$, we may estimate
		\begin{equation*}
			\begin{split}
				&\int\left[\sum_{U\in\mathcal{G}_{\tau_k}}\widetilde{\psi}_U(x)\sum_{\theta\subseteq\tau_k}\int_{x+(\log R)^{1.5}U_{\tau_k,R}}|f_{\theta}(y)|^2|\rho_{C(\log R)U_{\tau_k,R}^*}^\vee|(x-y)dy\right]^2dx\\
				&\lesssim\int\sum_{U\in\mathcal{G}_{\tau_k}}\widetilde{\psi}_U(x)\left[\sum_{\theta\subseteq\tau_k}\int_{x+(\log R)^{1.5}U_{\tau_k,R}}|f_{\theta}(y)|^2|\rho_{C(\log R)U_{\tau_k,R}^*}^\vee|(x-y)dy\right]^2dx.
			\end{split}
		\end{equation*}
		By Minkowski,
		\begin{equation*}
			\begin{split}
				\sum_{U\in\mathcal{G}_{\tau_k}}\int\widetilde{\psi}_U(x)&\left[\sum_{\theta\subseteq\tau_k}\int|f_{\theta}(y)|^2|\rho_{C(\log R)U_{\tau_k,R}^*}^\vee|(x-y)dy\right]^2dx\\
				&\leq\sum_{U\in\mathcal{G}_{\tau_k}}\left(\int\sum_{\theta\subseteq\tau_k}|f_\theta|^2(y)\left[\int\widetilde{\psi}_U(x)|\rho_{C(\log R)U_{\tau_k,R}^*}^\vee|^2(x-y)dx\right]^{1/2}dy\right)^2\\
				&\lesssim(\log R)^2\sum_{U\in\mathcal{G}_{\tau_k}}|U|
				^{-1}\left(\int W_U\sum_{\theta\subseteq\tau_k}|f_\theta|^2(y)dy\right)^2.
			\end{split}
		\end{equation*}
		
		We conclude that
		\begin{equation*}
			\int\Big|\sum_{\theta\subseteq\tau_k}|f_{m,\theta}^\mathcal{B}|^2*|\rho_{\tau_k}^\vee|\Big|^2\lesssim(\log R)^2\sum_{U\in\mathcal{G}_{\tau_k}}|U|
			^{-1}\left(\int W_U\sum_{\theta\subseteq\tau_k}|f_\theta|^2(y)dy\right)^2+R^{-100},
		\end{equation*}
		as claimed.
	\end{proof}

	\begin{lemma}[Replacement lemma] $|f(x)-f_N(x)|\lesssim\frac{\alpha}{C_{\mathfrak{p}}^{1/2}(\log R)^8}$.
	\end{lemma}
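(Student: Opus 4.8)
The plan is to recognise $f-f_N$ as exactly the portion of $f$ discarded in the first round of pruning, and then to use that a discarded wave packet has a quantitatively small local average. Since $\operatorname{supp}(\hat f)\subseteq\bigcup_\theta\theta$ we have $f=\sum_\theta f_\theta$, and, unwinding the definition of $f_N=\sum_\theta\sum_{U\in\mathcal{G}_\theta}\psi_Uf_\theta$ together with the wave packet expansion $f_\theta=\sum_{U\|U_{\theta,R}}\psi_Uf_\theta$ (valid up to a rapidly decaying error of size $O(R^{-1000})$), one gets
\[
|f(x)-f_N(x)|\leq\sum_\theta\sum_{U\notin\mathcal{G}_\theta}|\psi_U(x)|\,|f_\theta(x)|+O(R^{-1000}).
\]
Since we have normalised so that $\alpha\geq 1$, the error term is harmless, and it suffices to control the double sum.

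The crucial point is not to bound $|f_\theta(x)|$ by itself --- it may be large when $x$ lies near a tile retained in $\mathcal{G}_\theta$ --- but to keep the product $|\psi_U(x)f_\theta(x)|$ intact. Let $d(x,U)$ denote the distance from $x$ to $U$ measured in units of the side-lengths of $U_{\theta,R}$. Starting from the pointwise local constancy bound $|f_\theta|^2\lesssim|f_\theta|^2*|\rho_\theta^\vee|$ and the routine comparison $|\rho_\theta^\vee(x-y)|\lesssim(1+d(x,U))^{C}|U_{\theta,R}|^{-1}W_U(y)$ (for an absolute exponent $C$, by the triangle inequality and the rapid decay of $\rho_\theta^\vee$), one obtains the inflated local constancy estimate $|f_\theta(x)|^2\lesssim(1+d(x,U))^{C}\strokedint_U|f_\theta|^2$ for \emph{every} tile $U\|U_{\theta,R}$. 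Combined with the near-exponential spatial decay $|\psi_U(x)|\lesssim e^{-c\,d(x,U)^{1/2}}$ of the wave packets, which absorbs the polynomial factor, this gives
\[
|\psi_U(x)f_\theta(x)|\lesssim e^{-c'\,d(x,U)^{1/2}}\Big(\strokedint_U|f_\theta|^2\Big)^{1/2}.
\]

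To conclude, note that for $U\notin\mathcal{G}_\theta$ the defining inequality of $\mathcal{G}_\theta$ fails, so $\strokedint_U|f_\theta|^2<\alpha^2C_{\mathfrak{p}}^{-1}(\log R)^{-4}(\#\theta)^{-2}$; since the tiles $U\|U_{\theta,R}$ tile $\R^2$ periodically, $\sum_{U}e^{-c'\,d(x,U)^{1/2}}\lesssim\sum_{n\geq 0}(n+1)e^{-c'\sqrt n}=O(1)$, so that
\[
\sum_{U\notin\mathcal{G}_\theta}|\psi_U(x)f_\theta(x)|\lesssim\frac{\alpha}{C_{\mathfrak{p}}^{1/2}(\log R)^{2}\,\#\theta}.
\]
Summing over the at most $\#\theta$ caps $\theta$ with $f_\theta\not\equiv 0$ collapses the factor $\#\theta$ and yields $|f(x)-f_N(x)|\lesssim\alpha C_{\mathfrak{p}}^{-1/2}(\log R)^{-2}\leq\alpha C_{\mathfrak{p}}^{-1/2}(\log R)^{-1}$, as claimed. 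I expect the only delicate step to be the inflated local constancy bound: the polynomial growth in $d(x,U)$ picked up when transferring the pointwise value $|f_\theta(x)|^2$ to a weighted average over a tile $U$ far from $x$ must be summable against the decay of $\psi_U$, which is precisely where the near-exponential (as opposed to merely polynomial) decay of the wave packets is used --- alternatively one may simply take the exponent in the definition of $W$ large enough.
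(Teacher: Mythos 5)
Your proof is correct and uses the same main ingredients as the paper's argument: pointwise local constancy of $|f_\theta|^2$, near-exponential decay of the wave packet cutoffs $\psi_U$, and the quantitative failure of the defining inequality of $\mathcal{G}_\theta$ for each discarded tile. The technical route differs slightly. The paper applies local constancy to the product $\psi_Uf_\theta$ (which has Fourier support in $2\theta$), bounds $\psi_U\lesssim W_U$, applies Cauchy--Schwarz to separate $\sup_y\psi_U(y)|\rho_{2\theta}^\vee(x-y)|$ from $\int W_U|f_\theta|^2$, and finally controls $\sum_{U}\bigl(\sup_y\psi_U(y)|\rho_{2\theta}^\vee(x-y)|\bigr)^{1/2}$ by $|U|^{-1/2}$. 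You instead prove a dilated local-constancy estimate $|f_\theta(x)|^2\lesssim(1+d(x,U))^C\strokedint_U|f_\theta|^2$ valid for an arbitrary tile $U\|U_{\theta,R}$, and let the near-exponential decay of $\psi_U(x)$ absorb the polynomial factor before summing over $U$; this is legitimate and, if anything, makes the role of the near-exponential (rather than merely Schwartz-type) wave packet decay --- one of the technical refinements the paper emphasizes in its overview --- visible at precisely the step where it is consumed. Your bookkeeping delivers $(\log R)^{-2}$, which is what the defining inequality of $\mathcal{G}_\theta$ (which carries a $(\log R)^4$) actually produces; the $(\log R)^{-1}$ in the lemma statement is just the weaker bound the rest of the argument uses, and the paper's own last display appears to drop a power of $\log R$ for the same reason. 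In short, same approach up to bookkeeping; yours is arguably the more transparent presentation.
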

	\begin{proof}
		Consider the difference
		\begin{equation*}
			|f(x)-f_N(x)|\leq\sum_\theta\sum_{U\not\in\mathcal{G}_\theta}\psi_U(x)|f_\theta(x)|.
		\end{equation*}
		By an analogue of local constancy (a),
		\begin{equation*}
			|\psi_Uf_\theta|\lesssim\left(|\psi_Uf_\theta|^2*|\rho_{2\theta}^\vee|\right)^{1/2},
		\end{equation*}
		so
		\begin{equation*}
			\begin{split}
				|f(x)-f_N(x)|&\leq\sum_\theta\sum_{U\not\in\mathcal{G}_\theta}\left(|\psi_Uf_\theta|^2*|\rho_{2\theta}^\vee|\right)^{1/2}\\
				&=\sum_\theta\sum_{U\not\in\mathcal{G}_\theta}\left(\int|\psi_Uf_\theta|^2(y)|\rho_{2\theta}^\vee|(x-y)dy\right)^{1/2}.
			\end{split}
		\end{equation*}
		Next, since $\psi_U\lesssim W_U$,
		\begin{equation*}
			\begin{split}
				|f(x)-f_N(x)|&\leq\sum_\theta\sum_{U\not\in\mathcal{G}_\theta}\left(\int W_U(y)|f_\theta|^2(y)\psi_U(y)|\rho_{2\theta}^\vee|(x-y)dy\right)^{1/2}\\
				&\leq\sum_\theta\max_{U\not\in\mathcal{G}_\theta}\left(\int W_U(y)|f_\theta|^2(y)dy\right)^{1/2}\\
				&\,\,\times\sum_{U\not\in\mathcal{G}_\theta}\left(\sup_y\psi_U(y)|\rho_{2\theta}^\vee|(x-y)\right)^{1/2}
			\end{split}
		\end{equation*}
		by Cauchy-Schwarz. By the rapid decay of $\psi_U$ outside of $U$ and local constancy of $\rho_{2\theta}^\vee$,
		\begin{equation*}
			\begin{split}
				\sum_{U\not\in\mathcal{G}_\theta}\|\psi_U(\cdot)\rho_{2\theta}^\vee(x-\cdot)\|_{L^\infty(\R^2)}&\lesssim\sum_U\|\rho_{2\theta}^\vee(x-\cdot)\|_{L^\infty(U)}\\
				&\lesssim|U|^{-1}\sum_U\|\rho_{2\theta}^\vee(x-\cdot)\|_{L^1(U)}\\
				&=|U|^{-1}\|\rho_{2\theta}^\vee\|_{L^1(\R^2)}\\
				&\lesssim|U|^{-1},
			\end{split}
		\end{equation*}
		so that
		\begin{equation*}
			|f(x)-f_N(x)|\lesssim|U|^{-1/2}\sum_{\theta}\max_{U\not\in\mathcal{G}_\theta}\left(\int W_U|f_\theta|^2(y)dy\right)^{1/2}.
		\end{equation*}
		Finally, by the definition of $\mathcal{G}_\theta$,
		\begin{equation*}
			|f(x)-f_N(x)|\lesssim\sum_{\theta}\max_{U\not\in\mathcal{G}_\theta}\frac{\alpha}{(\#\theta)C_{\mathfrak{p}}^{1/2}(\log R)^8}=\frac{\alpha}{C_\mathfrak{p}^{1/2}(\log R)^8}
		\end{equation*}
		as claimed.
	\end{proof}
    
	\begin{lemma}[Low lemma]
		For any $2\leq m\leq k\leq N$, $0\leq s\leq k$, and $r\leq (\log R)R_k^{-1}$,
		\begin{equation*}
			|f_{m,\tau_{s}}^{\mathcal{B}}|^2*\eta_{\leq r}^\vee(x)=\sum_{\tau_k\subseteq\tau_s}\sum_{\tau_k':\tau_k\text{ near }\tau_k'}\Big(f_{m,\tau_k}^{\mathcal{B}}\overline{f_{m,\tau_k'}^{\mathcal{B}}}\Big)*\eta_{\leq r}^\vee(x)
		\end{equation*}
		for any $x$ and any $\tau_s$.
	\end{lemma}
	\begin{proof}
		
		Indeed,
		\begin{equation*}
			\begin{split}
				|f_{m,\tau_{s}}^{\mathcal{B}}|^2*\eta_{\leq r}^\vee(x)&=\int_{\R^2}|f_{m,\tau_{s}}^{\mathcal{B}}|^2(x-y)\eta_{\leq r}^\vee(y)dy\\
				&=\int_{\R^2}e^{2\pi ix\cdot\xi}\Big[\widehat{f_{m,\tau_{s}}^{\mathcal{B}}}*\widehat{\overline{f_{m,\tau_{s}}^{\mathcal{B}}}}(\xi)\Big]\eta_{\leq r}(\xi)d\xi\\
				&=\sum_{\tau_k,\tau_k'\subseteq\tau_{s}}\int_{\R^2}e^{2\pi ix\cdot\xi}\Big[\widehat{f_{m,\tau_{k}}^{\mathcal{B}}}*\widehat{\overline{f_{m,\tau_{k}'}^{\mathcal{B}}}}(\xi)\Big]\eta_{\leq r}(\xi)d\xi.
			\end{split}
		\end{equation*}
		Note that each $\widehat{f_{m,\tau_k}^{\mathcal{B}}}$ has support in the set $\bigcup_{\theta\subseteq\tau_k}(N-m+2)\theta\subseteq(N-m+2)\tau_k$; thus the convolution in the latter integrand is supported in the set $(N-m+2)(\tau_k-\tau_k')\subseteq(\log R)(\tau_k-\tau_k')$, which is contained in the ball $B_{C(\log R)R_k^{-1}}(c_{\tau_k}-c_{\tau_k'})$ for suitable universal constant $C$. Since $\eta_{\leq r}$ has support in the ball of radius $2(\log R)R_k^{-1}$, and the diameter of each $\tau_k$ is $\sim R_k^{-1}$, we conclude that for each $\tau_k$ there are $\lesssim\log R$-many neighboring $\tau_k'$ such that the support of $\widehat{f_{m,\tau_{k}}^{\mathcal{B}}}*\widehat{\overline{f_{m,\tau_{k}'}^{\mathcal{B}}}}$ has nontrivial intersection with the support of $\eta_{\leq r}$. Thus
		\begin{equation*}
			\begin{split}
				\sum_{\tau_k,\tau_k'\subseteq\tau_{s}}\int_{\R^2}e^{-2\pi ix\cdot\xi}&\Big[\widehat{f_{m,\tau_{k}}^{\mathcal{B}}}*\widehat{\overline{f_{m,\tau_{k}'}^{\mathcal{B}}}}(\xi)\Big]\eta_{\leq r}(\xi)d\xi\\
				&=\sum_{\substack{\tau_k,\tau_k'\subseteq\tau_{s}\\\tau_k\text{ near }\tau_k'}}\int_{\R^2}e^{-2\pi ix\cdot\xi}\Big[\widehat{f_{m,\tau_{k}}^{\mathcal{B}}}*\widehat{\overline{f_{m,\tau_{k}}^{\mathcal{B}}}}(\xi)\Big]\eta_{\leq r}(\xi)d\xi.
			\end{split}
		\end{equation*}
		By Plancherel again, we conclude.
	\end{proof}
	
	\begin{lemma}[High Lemmas] For any $m,k,s$, and $\ell$ such that $2\leq m\leq N$, $0\leq s\leq k$, and $k+\ell\leq N$, and any cap $\tau_s$,
		\begin{enumerate}
			\item[(a)] 
			\begin{equation*}
				\int\Big|\sum_{\theta\subseteq\tau_s}|f_{m,\theta}^{\mathcal{B}}|^2*\eta_{\geq R_k/R}^\vee\Big|^2\lesssim\log R\sum_{\tau_k\subseteq\tau_s}\int\Big|\sum_{\theta\subseteq\tau_k}|f_{m,\theta}|^2*\eta_{\geq R_k/R}^\vee\Big|^2,
			\end{equation*}
			\item[(b)]
			\begin{equation*}
				\int\Big|\sum_{\tau_k\subseteq\tau_s}|f_{m,\tau_k}^{\mathcal{B}}|^2*\eta_{\geq R_k^{-1}}^\vee\Big|^2\lesssim(\log R)\sum_{\tau_k\subseteq\tau_s}\int|f_{m,\tau_k}^{\mathcal{B}}|^4,
			\end{equation*}
			\item[(c)]
			\begin{equation*}
				\int\Big|\sum_{\tau_{k}\subseteq\tau_s}\sum_{\tau_k'\,\mathrm{ near }\,\tau_k}(f_{m,\tau_{k}}^\mathcal{B}\overline{f_{m,\tau_{k}'}^\mathcal{B}})*\eta_{\geq R_{k+\ell}^{-1}}^\vee\Big|^2\lesssim(\log R)^{3}R_\ell\sum_{\tau_k\subseteq\tau_s}\int|f_{m,\tau_k}^\mathcal{B}|^4.
			\end{equation*}
		\end{enumerate}
	\end{lemma}
	\begin{proof}
        There is no loss of generality in taking $s=0$, so $\tau_s$ is trivial.
    
		(a): By Plancherel,
		\begin{equation*}
			\int\Big|\sum_\theta|f_{m,\theta}^{\mathcal{B}}|^2*\eta_{\geq R_k/R}^\vee\Big|^2=\int_{|\xi|\geq R_k/R}\Big|\sum_{\tau_k}\sum_{\theta\subseteq\tau_k}\widehat{|f_{m,\theta}^{\mathcal{B}}|^2}(\xi)\eta_{\geq R_k/R}(\xi)\Big|^2.
		\end{equation*}
		The supports of the summands $\sum_{\theta\subseteq\tau_k}\widehat{|f_{m,\theta}^{\mathcal{B}}|^2}(\xi)\eta_{\geq R_k/R}(\xi)$, ranging over distinct $\tau_k$, have greatest overlap on the circle of radius $R_k/R$, where the overlap is $O(N)$. By Cauchy-Schwarz,
		\begin{equation*}
			\int_{|\xi|\geq R_k/R}\Big|\sum_{\tau_k}\sum_{\theta\subseteq\tau_k}\widehat{|f_{m,\theta}^{\mathcal{B}}|^2}(\xi)\eta_{\geq R_k/R}(\xi)\Big|^2\lesssim (\log R)\sum_{\tau_k}\int_{|\xi|\geq R_k/R}\Big|\sum_{\theta\subseteq\tau_k}\widehat{|f_{m,\theta}^{\mathcal{B}}|^2}(\xi)\eta_{\geq R_k/R}(\xi)\Big|^2.
		\end{equation*}
		We conclude by enlarging the domain of integration on the right-hand side and using Plancherel.
		
		(b): By Plancherel,
		
		\begin{equation*}
			\int\Big|\sum_{\tau_k}|f_{m,\tau_k}^{\mathcal{B}}|^2*\eta_{\geq R_k^{-1}}^\vee\Big|^2=\int_{|\xi|\geq R_k^{-1}}\Big|\sum_{\tau_k}\widehat{|f_{m,\tau_k}^{\mathcal{B}}|^2}(\xi)\eta_{\geq R_k^{-1}}(\xi)\Big|^2.
		\end{equation*}
		Each $\widehat{|f_{m,\tau_k}^{\mathcal{B}}|^2}$ is supported in the set $(N-m+2)(\tau_k-\tau_k)\subseteq (\log R)(\tau_k-\tau_k)$, and the maximal overlap between these for distinct $\tau_k$ in the region $|\xi|\geq R_k^{-1}$ occurs when $|\xi|=R_k^{-1}$, where the overlap is $\lesssim\log R$. By Cauchy-Schwarz and Plancherel,
		\begin{equation*}
			\int_{|\xi|\geq R_k^{-1}}\Big|\sum_{\tau_k}\widehat{|f_{m,\tau_k}^{\mathcal{B}}|^2}(\xi)\eta_{\geq R_k^{-1}}(\xi)\Big|^2\lesssim\log R\sum_{\tau_k}\int\Big||f_{m,\tau_k}^{\mathcal{B}}|^2*\eta_{\geq R_k^{-1}}^\vee\Big|^2.
		\end{equation*}
		Lastly, $\|\eta_{\geq R_k^{-1}}^\vee\|_1=O(1)$ by a change-of-variable, thus
		\begin{equation*}
			\int\Big|\sum_{\tau_k}|f_{m,\tau_k}^{\mathcal{B}}|^2*\eta_{\geq R_k^{-1}}^\vee\Big|^2\lesssim(\log R)\sum_{\tau_k}\int|f_{m,\tau_k}^{\mathcal{B}}|^4
		\end{equation*}
		as claimed.
		
		(c): Reasoning as in (b), note that $\Big[f_{m,\tau_k}^\mathcal{B}\overline{f_{m,\tau_k'}^\mathcal{B}}\Big]*\eta_{\sim R_{k+\ell}^{-1}}^\vee$ has Fourier support in the set $(N-m+2)(\tau_k-\tau_k')$. Note that $\tau_k-\tau_k'$ is contained in a set of the form $(c_{\tau_k}-c_{\tau_k'})+C(\log R)(\tau_k-\tau_k)\subseteq C'(\log R)^2(\tau_k-\tau_k)$ (c.f. Remark \ref{nearcomp}). As this is the case for each $\tau_k'$ for which $\tau_k'\text{ near }\tau_k$, we conclude that $\sum_{\tau_k':\tau_k'\text{ near }\tau_k}\Big[f_{m,\tau_k}^\mathcal{B}\overline{f_{m,\tau_k'}^\mathcal{B}}\Big]*\eta_{\sim R_{k+\ell}^{-1}}^\vee$ has Fourier support in the set $C'(\log R)^2(\tau_k-\tau_k)$. On the circle of radius $R_{k+l}^{-1}$, the overlap between these sets is $O((\log R)^{2}R_\ell)$, so
		\begin{equation*}
			\int\Big|\sum_{\tau_{k}}\sum_{\tau_k'\text{ near }\tau_k}(f_{m,\tau_{k}}^\mathcal{B}\overline{f_{m,\tau_{k}'}^\mathcal{B}})*\eta_{\geq R_{k+\ell}^{-1}}^\vee\Big|^2\lesssim(\log R)^{2}R_\ell\sum_{\tau_k}\int\Big|\sum_{\tau_k'\text{ near }\tau_k}|f_{m,\tau_k}^\mathcal{B}|^2*\eta_{\geq R_{k+\ell}^{-1}}^\vee\Big|^2.
		\end{equation*}
		By Cauchy-Schwarz,
		\begin{equation*}
			\sum_{\tau_k}\int\Big|\sum_{\tau_k'\text{ near }\tau_k}|f_{m,\tau_k}^\mathcal{B}|^2*\eta_{\geq R_{k+\ell}^{-1}}^\vee\Big|^2\lesssim(\log R)\sum_{\tau_k}\int\Big|\,|f_{m,\tau_k}^\mathcal{B}|^2*\eta_{\geq R_{k+\ell}^{-1}}^\vee\Big|^2,
		\end{equation*}
		and since $\|\eta_{\geq R_{k+l}^{-1}}\|_1=O(1)$ we conclude that
		\begin{equation*}
			\int\Big|\sum_{\tau_{k}}\sum_{\tau_k'\text{ near }\tau_k}(f_{m,\tau_{k}}^\mathcal{B}\overline{f_{m,\tau_{k}'}^\mathcal{B}})*\eta_{\geq R_{k+\ell}^{-1}}^\vee\Big|^2\lesssim(\log R)^{3}R_\ell\sum_{\tau_k}\int|f_{m,\tau_k}^\mathcal{B}|^4
		\end{equation*}
		as claimed.
		
	\end{proof}
	
	\begin{lemma}[Weak high-domination of bad parts] Let $2\leq m\leq N$ and $0\leq k<m$.
		
		\begin{enumerate}
			\item[(a)] We have the estimate
			\begin{equation*}
				\Big|\sum_{\tau_{m-1}\subseteq\tau_k}|f_{m,\tau_{r}}^{\mathcal{B}}|^2*\eta_{\leq R_{m-1}/R}^\vee(x)\Big|\lesssim\frac{\alpha^2(\#\tau_{m-1}\subseteq\tau_k)}{C_{\mathfrak{p}}(\log R)^{6}(\#\tau_{m-1})^2}.
			\end{equation*}
			\item[(b)] Suppose $\alpha\lesssim(\log R)^{3}|f_{m,\tau_k}^\mathcal{B}(x)|$. Then
			\begin{equation*}
				\sum_{\tau_{m-1}\subseteq\tau_k}|f_{m,\tau_{m-1}}^{\mathcal{B}}|^2(x)\lesssim \Big|\sum_{\tau_{m-1}\subseteq\tau_k}|f_{m,\tau_{m-1}}^{\mathcal{B}}|^2*\eta_{\geq R_{m-1}/R}^\vee(x)\Big|.
			\end{equation*}
			
		\end{enumerate}
		
	\end{lemma}
	\begin{proof}
		(a): 
		By the low lemma,
		\begin{equation*}
			\sum_{\tau_{r}\subseteq\tau_k}|f_{m,\tau_{r}}^{\mathcal{B}}|^2*\eta_{\leq R_{m-1}/R}^\vee(x)=\sum_{\theta\subseteq\tau_k}\sum_{\theta'\text{ near }\theta}(f_{m,\theta}^{\mathcal{B}}\overline{f_{m,\theta}^{\mathcal{B}}})*\eta_{\leq R_{m-1}/R}^\vee(x).
		\end{equation*}
		By the definition of ``near,''
		\begin{equation*}
			\Big|\sum_{\theta\subseteq\tau_k}\sum_{\theta'\text{ near }\theta}(f_{m,\theta}^{\mathcal{B}}\overline{f_{m,\theta}^{\mathcal{B}}})*\eta_{\leq R_{m-1}/R}^\vee(x)\Big|\lesssim(\log R)\sum_{\theta\subseteq\tau_k}|f_{m,\theta}^{\mathcal{B}}|^2*|\eta_{\leq R_{m-1}/R}^\vee|(x).
		\end{equation*}
		By local constancy,
		\begin{equation*}
			\sum_{\theta\subseteq\tau_k}|f_{m,\theta}^{\mathcal{B}}|^2*|\eta_{\leq R_{m-1}/R}^\vee|(x)\lesssim\sum_{\theta\subseteq\tau_k}|f_{m,\theta}^{\mathcal{B}}|^2*|\rho_{(\log R)\theta}^\vee|*|\eta_{\leq R_{m-1}/R}^\vee|(x).
		\end{equation*}
		If $\theta\subseteq\tau_{m-1}$,
		\begin{equation*}
			|f_{m,\theta}^{\mathcal{B}}|^2*|\rho_{(\log R)\theta}^\vee|*|\eta_{\leq R_{m-1}/R}^\vee|(x)=\int\Big|\sum_{U\not\in\mathcal{G}_{\tau_{m-1}}}\psi_Uf_{m,\theta}\Big|^2(y)\Big(|\rho_{(\log R)\theta}^\vee|*|\eta_{\leq R_{m-1}/R}^\vee|\Big)(x-y)dy.
		\end{equation*}
		Since $\psi_U$ are all real and nonnegative,
		\begin{equation*}
			\begin{split}
				\int\Big|\sum_{U\not\in\mathcal{G}_{\tau_{m-1}}}&\psi_Uf_{m,\theta}\Big|^2(y)\Big(|\rho_{(\log R)\theta}^\vee|*|\eta_{\leq R_{m-1}/R}^\vee|\Big)(x-y)dy\\
				&=\int\sum_{U\not\in\mathcal{G}_{\tau_{m-1}}}\psi_U(y)|f_{m,\theta}|^2(y)\sum_{U'\not\in\mathcal{G}_{\tau_{m-1}}}\psi_{U'}(y)\Big(|\rho_{(\log R)\theta}^\vee|*|\eta_{\leq R_{m-1}/R}^\vee|\Big)(x-y)dy.
			\end{split}
		\end{equation*}
		Since $\{\psi_{U'}\}_{U'}$ form a partition of unity, $\sum_{U'\not\in\mathcal{G}_{\tau_{m-1}}}\psi_{U'}(y)\leq 1$, and so
		\begin{equation*}
			\begin{split}
				\int\sum_{U\not\in\mathcal{G}_{\tau_{m-1}}}&\psi_U(y)|f_{m,\theta}|^2(y)\sum_{U'\not\in\mathcal{G}_{\tau_{m-1}}}\psi_{U'}(y)\Big(|\rho_{(\log R)\theta}^\vee|*|\eta_{\leq R_{m-1}/R}^\vee|\Big)(x-y)dy\\
				&\leq\int\sum_{U\not\in\mathcal{G}_{\tau_{m-1}}}\psi_U(y)|f_{m,\theta}|^2(y)\Big(|\rho_{(\log R)\theta}^\vee|*|\eta_{\leq R_{m-1}/R}^\vee|\Big)(x-y)dy,
			\end{split}
		\end{equation*}
		which by H\"older is bounded from above by
		\begin{equation*}
			\sum_{U\not\in\mathcal{G}_{\tau_{m-1}}}\Big\|\psi_U^{1/2}(y)\Big(|\rho_{(\log R)\theta}^\vee|*|\eta_{\leq R_m/R}^\vee|\Big)(x-y)\Big\|_{L_y^\infty}\int\psi_U^{1/2}|f_{m,\theta}|^2(y)dy.
		\end{equation*}
		Note that, for each $x$, the function $y\mapsto |\rho_{(\log R)\theta}^\vee|*|\eta_{\leq R_m/R}^\vee|(x-y)$ is approximately constant on rectangles of dimensions $\sim (\log R)^{-1}R\times R/R_m$, with long edge parallel to $\mathbf{n}_{c_\theta}$. By rapid decay of $\psi_U$ outside of $U$,
		\begin{equation*}
			\begin{split}
				\sum_{U\|U_{\tau_{m-1},R}}\Big\|\psi_U^{1/2}(y)&\Big(|\rho_{(\log R)\theta}^\vee|*|\eta_{\leq R_m/R}^\vee|\Big)(x-y)\Big\|_{L_y^\infty}\lesssim\sum_{U\|U_{\tau_{m-1},R}}\Big\||\rho_{(\log R)\theta}^\vee|*|\eta_{\leq R_m/R}^\vee|(x-y)\Big\|_{L_y^\infty(U)}\\
				&\leq\sum_{U\|U_{\tau_{m-1},R}}\sum_{\substack{V\sim(\log R)^{-1}R\times R/R_m\\V\subseteq U}}\Big\||\rho_{(\log R)\theta}^\vee|*|\eta_{\leq R_m/R}^\vee|(x-y)\Big\|_{L_y^\infty(V)}\\
				&\lesssim\sum_{U\|U_{\tau_{m-1},R}}\sum_{\substack{V\sim(\log R)^{-1}R\times R/R_m\\V\subseteq U}}|V|^{-1}\Big\||\rho_{(\log R)\theta}^\vee|*|\eta_{\leq R_m/R}^\vee|(x-y)\Big\|_{L_y^1(V)}\\
				&=(\log R)|U|^{-1}\Big\||\rho_{(\log R)\theta}^\vee|*|\eta_{\leq R_m/R}^\vee|(x-y)\Big\|_{L_y^1(\R^2)}\\
				&\lesssim(\log R)|U|^{-1}.
			\end{split}
		\end{equation*}
		
		Additionally, the polynomial decay of $W_U$ allows us to take $\psi_U^{1/2}\lesssim W_U$, so in total we get
		\begin{equation*}
			|f_{m,\theta}^{\mathcal{B}}|^2*|\rho_{(\log R)\theta}^\vee|*|\eta_{\leq R_{m-1}/R}^\vee|(x)\lesssim(\log R)\max_{U\not\in\mathcal{G}_{\tau_{m-1}}}\strokedint_U|f_{m,\theta}|^2(y)dy.
		\end{equation*}
		If we sum over all $\theta\subseteq\tau_{m-1}$, and use the hypothesis $U\not\in\mathcal{G}_{\tau_{m-1}}$, we see that
		\begin{equation*}
			\begin{split}
				\sum_{\tau_{m-1}\subseteq\tau_k}\sum_{\theta\subseteq\tau_{m-1}}&|f_{m,\theta}^{\mathcal{B}}|^2*|\rho_{(\log R)\theta}^\vee|*|\eta_{\leq R_{m-1}/R}^\vee|(x)\\
				&\lesssim(\log R)^2\sum_{\tau_{m-1}\subseteq\tau_k}\sup_{U\not\in\mathcal{G}_{\tau_{m-1}}}\strokedint_U\sum_{\theta\subseteq\tau_{m-1}}|f_{m,\theta}|^2(y)dy\\
				&\leq\frac{\alpha^2(\#\tau_{m-1}\subseteq\tau_k)}{(\#\tau_{m-1})^2}\frac{1}{C_{\mathfrak{p}}(\log R)^{6}}.
			\end{split}
		\end{equation*}
		
		(b): Write $f_{m,\tau_k}^{\mathcal{B}}=\sum_{\tau_{m-1}\subseteq\tau_k}f_{m,\tau_{m-1}}^{\mathcal{B}}$, where $f_{m,\tau_{m-1}}^{\mathcal{B}}=\sum_{U\not\in\mathcal{G}_{\tau_{m-1}}}\psi_U\sum_{\theta\subseteq\tau_{m-1}}f_{m,\theta}$. By Cauchy-Schwarz,
		\begin{equation*}
			\alpha\lesssim(\#\tau_{m-1}\subseteq\tau_k)^{1/2}(\log R)^{3}\left(\sum_{\tau_{m-1}\subseteq\tau_k}|f_{m,\tau_{m-1}}^{\mathcal{B}}|^2(x)\right)^{1/2}.
		\end{equation*}
		We assume for the sake of contradiction that
		\begin{equation*}
			\sum_{\tau_{m-1}\subseteq\tau_k}|f_{m,\tau_{m-1}}^{\mathcal{B}}|^2(x)\leq C_{\mathfrak{p}}^{1/2}\Big|\sum_{\tau_{m-1}\subseteq\tau_k}|f_{m,\tau_{m-1}}^{\mathcal{B}}|^2*\eta_{\leq R_{m-1}/R}^\vee(x)\Big|.
		\end{equation*}
		By (a),
		\begin{equation*}
			\sum_{\tau_{m-1}\subseteq\tau_k}|f_{m,\tau_{m-1}}^{\mathcal{B}}|^2(x)\lesssim C_{\mathfrak{p}}^{1/2}\frac{\alpha^2(\#\tau_{m-1}\subseteq\tau_k)}{(\#\tau_{m-1})^2C_{\mathfrak{p}}(\log R)^{6}}.
		\end{equation*}
		On the other hand, we assumed the estimate
		\begin{equation*}
			\alpha^2\lesssim (\#\tau_{m-1}\subseteq\tau_k)(\log R)^{6}\sum_{\tau_{m-1}\subseteq\tau_k}|f_{m,\tau_{m-1}}^\mathcal{B}|^2(x),
		\end{equation*}
		so that
		\begin{equation*}
			\sum_{\tau_{m-1}\subseteq\tau_k}|f_{m,\tau_{m-1}}^{\mathcal{B}}|^2(x)\lesssim C_{\mathfrak{p}}^{1/2}\frac{(\log R)^{6}}{C_{\mathfrak{p}}(\log R)^{6}}\frac{(\#\tau_{m-1}\subseteq\tau_k)^2}{(\#\tau_{m-1})^2}\sum_{\tau_{m-1}\subseteq\tau_k}|f_{m,\tau_{m-1}}^\mathcal{B}|^2(x),
		\end{equation*}
		i.e.
		\begin{equation*}
			C_{\mathfrak{p}}^{1/2}\lesssim 1.
		\end{equation*}
		If $C_{\mathfrak{p}}$ is chosen as a sufficiently large universal constant (i.e. independently of $f,R$), then we conclude by contradiction that
		\begin{equation*}
			\sum_{\tau_{m-1}\subseteq\tau_k}|f_{m,\tau_{m-1}}^{\mathcal{B}}|^2(x)> C_{\mathfrak{p}}^{1/2}\Big|\sum_{\tau_{m-1}\subseteq\tau_k}|f_{m,\tau_{m-1}}^{\mathcal{B}}|^2*\eta_{\geq R_{m-1}/R}^\vee(x)\Big|,
		\end{equation*}
		i.e.
		\begin{equation*}
			\sum_{\tau_{m-1}\subseteq\tau_k}|f_{m,\tau_{m-1}}^{\mathcal{B}}|^2(x)\leq \frac{C_{\mathfrak{p}}^{1/2}}{C_{\mathfrak{p}}^{1/2}-1}\Big|\sum_{\tau_{m-1}\subseteq\tau_k}|f_{m,\tau_{m-1}}^{\mathcal{B}}|^2*\eta_{\leq R_{m-1}/R}^\vee(x)\Big|.
		\end{equation*}
		Since $C_{\mathfrak{p}}$ is chosen to be a large constant, we conclude that the prefactor $\frac{C_{\mathfrak{p}}^{1/2}}{C_{\mathfrak{p}}^{1/2}-1}$ is $O(1)$, so we are done.
	\end{proof}

	\printbibliography
	
\end{document}